\documentclass[12pt]{amsart}
\usepackage{amsfonts,amsmath,amssymb,amscd,mathrsfs}
\usepackage{xcolor}
\usepackage{verbatim}
\usepackage{comment}
\usepackage{enumitem}
\setlist[enumerate,1]{label=\textup{(\roman*)}}
\usepackage[colorlinks=true,
linkcolor=blue,
citecolor=blue,
urlcolor=blue]{hyperref}
\usepackage{tikz-cd}
\newcommand{\newsection}[1]{\setcounter{equation}{0} \section{#1}}
\newtheorem{theorem}{Theorem}[section]
\newtheorem{lemma}[theorem]{Lemma}
\newtheorem{corollary}[theorem]{Corollary}
\newtheorem{proposition}[theorem]{Proposition}
\theoremstyle{definition}

\newtheorem{remark}[theorem]{Remark}
\theoremstyle{remark}
\newtheorem{question}[theorem]{Question}
\newtheorem{conjecture}[theorem]{Conjecture}

\def \D{\mathbb{D}}

\def \R{\mathbb{R}}
\def \C{\mathbb{C}}
\def \T{\mathbb{T}}
\newcommand{\wt}{\widetilde}
\newcommand{\clb}{\mathcal{B}}
\newcommand{\clh}{\mathcal{H}}
\newcommand{\clk}{\mathcal{K}}
\newcommand{\clr}{\mathcal{R}}
\newcommand{\cli}{\mathcal{I}}
\newcommand{\clt}{\mathcal{T}}
\newcommand{\cls}{\mathcal{S}}
\newcommand{\clc}{\mathcal{C}}
\newcommand{\cld}{\mathcal{D}}

\newcommand{\clz}{\mathcal{Z}}
\newcommand{\clm}{\mathcal{M}}
\newcommand{\vp}{\varphi}
\newcommand{\ran}{\mathrm{ran}}
\newcommand{\ip}[2]{\left\langle #1,#2\right\rangle}
\newcommand{\norm}[1]{\left\|#1\right\|}
\newcommand\restr[2]{\ensuremath{\left.#1\right|_{#2}}}
\numberwithin{equation}{section}
\usepackage{mathtools}
\newcommand{\supp}{\operatorname{supp}}
\newcommand{\spn}{\operatorname{span}}
\newcommand{\bm}{\mathbf}
\def\Sig{\Sigma_\theta}

\begin{document}
	
	\title[Algebraic properties of TTO $+$ THO]
	{Algebraic properties of Truncated Toeplitz $+$ Truncated Hankel operators}
	
	\author{Kritika Babbar}
	\address{Department of Mathematics, Indian Institute of Technology Roorkee, Roorkee--247\,667, Uttarakhand, India}
	\email{kritikababbariitr@gmail.com, kritika@ma.iitr.ac.in}
	
	\author{Satyabrata Majee}
	\address{School of Mathematical and Statistical Sciences, North-West University, Potchefstroom, 2520, South Africa}
	\email{majeesatyabrata@gmail.com, 56337817@mynwu.ac.za}
	
	\author{Amit Maji}
	\address{Department of Mathematics, Indian Institute of Technology Roorkee, Roorkee--247\,667, Uttarakhand, India}
	\email{amit.maji@ma.iitr.ac.in, amit.iitm07@gmail.com}
	
	\subjclass[2020]{Primary 47B35; Secondary 47A15, 47B47, 46E20}
	\keywords{Truncated Toeplitz operators, truncated Hankel operators, model spaces, reproducing
		kernels, compressed shift, angular derivative, Crofoot transform}
	
\begin{abstract}
				
		In this paper, we study the algebraic properties of $\cls_\theta$ --- the class of truncated Toeplitz plus truncated Hankel operators --- on the model space $\clk_\theta$. We also give a complete characterization of the intersection class of truncated Toeplitz and truncated Hankel operators under certain conditions. As applications, we describe the non-uniqueness of a decomposition $T=A+B$, show that self-adjoint operators admit self-adjoint decompositions, give a necessary and sufficient inclusion criterion for $\cls_\theta$ when $\theta$ is a finite Blaschke product, and, from a first-order commutator characterization of $\cls_\theta$, describe the products of its elements and the operators commuting with $S_\theta+S_\theta^{*}$.
	\end{abstract}
	
	\maketitle

	\newsection{Introduction}\label{sec:intro}
	
	Let $\theta$ be a non-constant inner function on the unit disc $\D$ and let
	$\clk_\theta=H^2\ominus\theta H^2$ be the associated model space, with compressed shift
	$S_\theta f=P_{\clk_\theta}(zf)$. Sarason \cite{Sarason} introduced the truncated Toeplitz operators
	on $\clk_\theta$ and characterized them by a rank-two displacement identity involving $S_\theta$. This work initiated a broad study of the various aspects of the subject, including the spectral and algebraic properties of truncated Toeplitz operators. In \cite{RB}, Bessonov gave a complete characterization of finite rank truncated Toeplitz operators. For a detailed study of truncated Toeplitz operators, one can refer to \cite{CIMA-ROSS-WOGEN, G-ROSS-W, GMR, MaZheng} and the references within.
	
	The notion of truncated Hankel operators has been developed later than that of truncated Toeplitz operators. Recently, Gu and Ma developed a systematic theory of truncated Hankel operators on model spaces in
	\cite{GuMa}. They established an analogous characterization of truncated Hankel operators by rank-two displacement identity and also investigated their fundamental and algebraic properties. Later on, the algebraic relationship and the products involving truncated Toeplitz and truncated Hankel operators have been discussed in \cite{XZ}. Writing
	$\clt_\theta$ and $\clh_\theta$ for the two classes of bounded operators so obtained, the present paper is concerned with the algebraic structure of the sum
	\[
	\cls_\theta=\clt_\theta+\clh_\theta,
	\]
	the model-space analogue of the classical class of Toeplitz-plus-Hankel matrices \cite{BBB}. The algebraic and characterization aspects of Toeplitz $+$ Hankel operators are discussed by various mathematicians (cf.~ \cite{Gu, Sang} and references therein). However, limited work has been done in the context of truncated Toeplitz $+$ truncated Hankel operators on the model space (cf.~\cite{BM}).
	
	We intend to study the structure of $\cls_\theta$ which is controlled by the intersection
	\[
	\cli_\theta=\clt_\theta\cap\clh_\theta .
	\]
	Indeed, if $T=A+B=A'+B'$ are two decompositions of the same
	operator, with $A, A'\in\clt_\theta$ and $B, B'\in\clh_\theta$, then $A'-A=B-B'$ lies in $\cli_\theta$.
	In this way,  $\cli_\theta$ measures the non-uniqueness of the decomposition. Furthermore, other algebraic properties such as the existence of a self-adjoint decomposition and an intrinsic membership test for $\cls_\theta$ are made available by $\cli_\theta$. A displacement calculus for $\cls_\theta$ and a symbol-free characterization of $\cls_\theta$ have been developed recently in \cite{BM}.  Moreover, they provide a complete description of $\cli_\theta$ for $\theta=z^n$ and prove that $\dim \cli_\theta=2$ in that case. In this paper, we substantially generalize this result. We prove that $\dim \cli_\theta=2$ whenever $\theta$ has an angular derivative in the sense of Carath\'{e}odory (ADC) at both $1$ and $-1$; in particular, this holds for every finite Blaschke product, and here $\clk_\theta$ may be infinite-dimensional. More generally, for every inner function with $\dim\clk_\theta\ge3$ we obtain the complete determination
	\[
	\dim\cli_\theta=\#\{\zeta\in\{1,-1\}:\theta\ \text{has an ADC at}\ \zeta\}\in\{0,1,2\},
	\]
	so that each of the values $0,1,2$ occurs (see Section~\ref{sec:tri}).
	
	The truncated Toeplitz and truncated Hankel operators are associated with the following displacement maps on $\clb(\clk_\theta)$:
	\[
	D_T(X)=X-S_\theta X S_\theta^* \qquad\text{and}\qquad D_H(X)=X-S_\theta^* X S_\theta^*.
	\]
	Bhuia and Majee \cite{BM} considered the second-order displacement map $D_HD_T$ and proved that for a bounded operator $X \in \cls_\theta$, $D_HD_T(X)$ belongs to
	\[
	\clm:=\{f \otimes k_0^\theta +k_0^\theta \otimes g +u \otimes S_\theta k_0^\theta +\wt{k}_0^\theta \otimes v: f,g,u,v \in \clk_\theta\}.
	\]
	The natural question lies in the converse:
	\begin{question}
		If $X \in \clb(\clk_\theta)$ satisfies $D_H D_T(X) \in \clm$, is $X \in \cls_\theta$?
	\end{question}
	
	In this paper, we give an affirmative answer to this question when $\clk_\theta$ is a finite-dimensional model space. Furthermore, as byproducts of our findings, we aim to explore algebraic and structural properties of $\cls_\theta$. Leveraging the complete description of $\cli_\theta$, we establish a novel characterization of $\cls_\theta$ for a finite Blaschke product $\theta$. Another important object in this analysis is the operator $S_\theta +S_\theta^*$. In \cite{Gu}, Gu investigated the commutation relations of Toeplitz $+$ Hankel operators with $S+S^*$ (where $S$ is the unilateral shift on the Hardy space) and showed that any bounded operator commuting with $S+S^*$ must be of type \emph{Toeplitz + Hankel}. Our next goal is to provide the analogous result in the framework of finite-dimensional model spaces. More precisely, we prove that any bounded operator commuting with $S_\theta +S_\theta^*$ necessarily belongs to $\cls_\theta$. Additionally, we explicitly determine the commutant of $S_\theta+S_\theta^*$ for the finite-dimensional model space $\clk_\theta$.
	
	Turning back to algebraic properties, Sang \cite{Sang} obtained a necessary and sufficient condition for the product of two \emph{Toeplitz + Hankel} operators to be another \emph{Toeplitz + Hankel} operator. In our paper,  we establish a symbol-free criterion for their truncated counterparts. Specifically, we obtain a necessary and sufficient condition under which the product of two \emph{truncated Toeplitz + truncated Hankel} operators remains \emph{truncated Toeplitz + truncated Hankel}. Finally, we conclude our paper with some remarks extending these concepts for the case of infinite-dimensional model space.
	
	The organization of the paper is as follows: In Section \ref{sec:prelim}, we provide relevant definitions and results that will be used throughout the paper. In Section \ref{sec:rigid}, we establish some linear independence results involving reproducing kernels of $\clk_\theta$ which will be required for the main results. Section \ref{sec:upper} provides a complete description of the intersection class $\cli_\theta$ under suitable conditions. In Section \ref{sec:tri}, we explore the structure of $\cli_\theta$ for an arbitrary inner function without any restriction. Section \ref{sec:alg} focuses on the algebraic implications of the structure of $\cli_\theta$. In Section \ref{sec:comm}, we provide a commutator-based characterization of $\cls_\theta$ and also give necessary and sufficient conditions under which the product of two elements of $\cls_\theta$  again lies in $\cls_\theta$ when $\theta$ is a finite Blaschke product. Finally, we give concluding remarks in Section \ref{sec:Conclusion}.
	
	\newsection{Preliminaries}\label{sec:prelim}
	
	In this section, we recall the preliminary definitions and results that will be used throughout the paper. Firstly, for $p, q \in \clk_\theta$, recall the rank-one operator $p\otimes q: \clk_{\theta} \rightarrow \clk_{\theta}$ acts by
	\[
	(p\otimes q)f=\ip{f}{q}\,p, \qquad  f \in \clk_\theta. 
	\]
	and is \emph{linear in the first slot and  conjugate-linear in the second}:
	\begin{equation}\label{eq:tensor}
		A(p\otimes q)C=(Ap)\otimes(C^{*}q),\qquad (\alpha p)\otimes q=\alpha\,(p\otimes q),\qquad
		p\otimes(\alpha q)=\overline{\alpha}\,(p\otimes q).
	\end{equation}The conventions follow the standard model-space literature \cite{GMR,Nik,Sarason}.
	
	\subsection{Hardy and model spaces}
	Let $\D=\{z:|z|<1\}$ and $\T=\partial\D$ with normalized Lebesgue measure $dm$. Write $L^2$ for the
	space of square-integrable functions on $\T$; the Hardy space $H^2$ is the closed subspace of $L^2$ consisting of all such functions of $L^2$ whose
	negative Fourier coefficients vanish, $P_+$ is the Szeg\H{o} projection of $L^2$ onto $H^2$, and
	$P_-=I-P_+$ projects $L^2$ onto $H^2_-=L^2\ominus H^2$. Let $L^\infty$ be the Banach algebra of all essentially bounded functions on $\T$ and $H^\infty$ denote the space of all bounded analytic functions on $\D$.
	
	For any $\vp \in L^\infty$, the \emph{Laurent operator} on $L^2$, denoted by $M_\vp$,  is defined as
	\[
	M_\vp(f)=\vp f, \qquad (f \in L^2).
	\]
	
	A function $\theta \in H^\infty$ is said to be inner if $|\theta(e^{it})|=1$ a.e.\ on $\T$. For $\theta$ inner, the corresponding model space is $\clk_\theta=H^2\ominus\theta H^2$. The reproducing kernel and conjugate kernel at $\lambda\in\D$ are given by
	\[
	k_\lambda^\theta(z)=\frac{1-\overline{\theta(\lambda)}\,\theta(z)}{1-\overline{\lambda}z},
	\qquad
	\wt{k}_\lambda^\theta(z)=\frac{\theta(z)-\theta(\lambda)}{z-\lambda},
	\]
	so that $k_0^\theta=1-\overline{\theta(0)}\,\theta$. The compressed shift $S_\theta$ on $\clk_\theta$ is defined as
	\[
	S_\theta f=P_{\clk_\theta}(zf), \qquad (f \in \clk_\theta) 
	\]
	with adjoint $S_\theta^* f=P_{\clk_\theta}(\overline{z}f)$ where $P_{\clk_\theta}$ is the orthogonal projection onto $\clk_\theta$.  For a
	non-constant inner $\theta$, $S_\theta$ is a completely non-unitary contraction of class $C_{00}$
	\cite[Ch.~III]{SNFBK}, i.e., $S_\theta^{n}f\to0$ and $(S_\theta^{*})^{n}f\to0$ as $n \to \infty$ for every
	$f\in\clk_\theta$. In what follows, the operator $C_\theta: L^2 \rightarrow L^2$ defined by
	\[
	(C_\theta f)(z)=\theta(z)\,\overline{z}\,\overline{f(z)},\qquad (z\in\T),
	\]
	is a conjugation (antilinear isometric involution)  map which bijectively maps $\clk_\theta$ to itself and satisfies
	$\wt{k}_\lambda^\theta=C_\theta k_\lambda^\theta$.
	
	We record the structural identities related to $S_\theta$ \cite[Lemma 2.4]{Sarason}
	\begin{align}
		I-S_\theta S_\theta^*&=k_0^\theta\otimes k_0^\theta, \label{eq:s1}\\
		I-S_\theta^* S_\theta&=\wt{k}_0^\theta\otimes\wt{k}_0^\theta, \label{eq:s2}\\
		S_\theta^* k_0^\theta&=-\overline{\theta(0)}\,\wt{k}_0^\theta, \label{eq:s3}\\
		S_\theta\wt{k}_0^\theta&=-\theta(0)\,k_0^\theta, \label{eq:s4}
	\end{align}
	together with the antilinear relation $S_\theta^*=C_\theta S_\theta C_\theta$.
	
	Finally, we recall the description of the spectrum of $S_\theta$. Factoring $\theta=B\,s_\mu$, where $B$ is a
	Blaschke product with zero set $\{a_j\}$ and $s_\mu$ is a singular inner function with measure $\mu$, then
	\begin{equation}\label{eq:specthm}
		\sigma(S_\theta)=\overline{\{a_j\}}\cup\supp\mu
		=\{a_j\}\cup\bigl\{\zeta\in\T:\theta\ \text{has a singularity at}\ \zeta\bigr\}
	\end{equation}
	\cite[Ch.~5]{GMR}, \cite[Ch.~3]{Nik}. Note that, since the zeros of a
	non-zero analytic function are isolated in $\D$, the point $0 \in \sigma(S_\theta)$ if and
	only if $\theta(0)=0$; hence
	\begin{equation}\label{eq:invertible}
		\theta(0)\ne0\ \iff \ 0\notin\sigma(S_\theta)\ \iff\
		S_\theta\ (\text{and}\ S_\theta^{*})\ \text{is invertible.}
	\end{equation}
	
	\subsection{Angular derivatives} Let $f$ be an analytic function on $\D$ and $\zeta \in \T$ such that $f'$ has a non-tangential limit at $\zeta$  (see \cite[Chapter 2]{GMR} for details), the quantity
	\[
	f'(\zeta)=\angle \lim_{z \to \zeta} \frac{f(z)-f(\zeta)}{z-\zeta}=\angle \lim_{z\to \zeta}f'(z)
	\]
	is called the \emph{angular derivative} of $f$ at $\zeta$. In addition, we say that $f$ has an \emph{angular derivative in the sense of Carath\'{e}odory}  (ADC) at $\zeta \in \T$ if the following conditions hold:
	\begin{enumerate}
		\item $f$ has an angular derivative at $\zeta$,
		\item $|f(\zeta)|=1$.
	\end{enumerate}
	
	A characterization for $\theta$ to have an ADC at $\zeta \in \T$ is given by Julia and Carath\'{e}odory. One can refer to \cite{GMR} for the following:
	\begin{lemma}\label{lem:Julia}
		Let $\theta$ be an inner function and $\zeta \in \T$. Then the following are equivalent:
		\begin{enumerate}
			\item $l:=\liminf\limits_{z\to \zeta} \frac{1-|\theta(z)|}{1-|z|}<\infty$.
			\item Each $f \in \clk_\theta$ has a non-tangential limit at $\zeta$.
			\item $\theta$ has an ADC at $\zeta$.
		\end{enumerate}
		Moreover, under the conditions above,  $l=\angle\lim\limits_{z\to \zeta} \frac{1-|\theta(z)|}{1-|z|}=|\theta'(\zeta)|=k_\zeta^\theta(\zeta)$.
	\end{lemma}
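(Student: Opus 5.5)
The plan is to prove the cycle (iii)$\Rightarrow$(ii)$\Rightarrow$(i)$\Rightarrow$(iii). Alongside it we identify the value of $l$. The basic tool is the kernel norm
\[
\|k_\lambda^\theta\|^2=k_\lambda^\theta(\lambda)=\frac{1-|\theta(\lambda)|^2}{1-|\lambda|^2},
\]
together with the fact that, for $f\in\clk_\theta$,
\[
|f(\lambda)|=|\ip{f}{k_\lambda^\theta}|\le \|f\|\,\|k_\lambda^\theta\|.
\]
Since $1+|\theta|$ and $1+|z|$ both lie between $1$ and $2$, the quotient $\frac{1-|\theta(z)|}{1-|z|}$ stays bounded along a sequence exactly when $\|k_z^\theta\|$ does. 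So conditions on $l$ can be restated as boundedness of the kernels.

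\textbf{(i)$\Rightarrow$(iii).} This is the classical Julia--Carath\'eodory theorem applied to the self-map $\theta$ of $\D$. Julia's lemma turns the finite $\liminf$ into the statement that $\theta$ maps each horocycle at $\zeta$ into a horocycle at some unimodular point $\eta$. It follows that $\theta(z)\to\eta$ non-tangentially. Carath\'eodory's theorem then gives that $\frac{\theta(z)-\eta}{z-\zeta}$ and $\theta'(z)$ both have the non-tangential limit $\eta\bar\zeta\, l$. The same argument yields the ``moreover'' part: the angular limit of $\frac{1-|\theta(z)|}{1-|z|}$ exists and equals $l=|\theta'(\zeta)|$. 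I will cite this rather than reprove it.

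\textbf{(iii)$\Rightarrow$(ii).} Given an ADC at $\zeta$, the boundary kernel
\[
k_\zeta^\theta(z)=\frac{1-\overline{\theta(\zeta)}\theta(z)}{1-\bar\zeta z}
\]
is the non-tangential limit of $k_\lambda^\theta$. Its norms $\|k_\lambda^\theta\|^2$ are bounded as $\lambda\to\zeta$ non-tangentially, since they tend to $|\theta'(\zeta)|$. A bounded family has weakly convergent subsequences, and pointwise convergence identifies the weak limit as $k_\zeta^\theta$. Hence $k_\lambda^\theta\to k_\zeta^\theta$ weakly, and so $f(\lambda)=\ip{f}{k_\lambda^\theta}\to\ip{f}{k_\zeta^\theta}$ for every $f\in\clk_\theta$. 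This is the non-tangential limit in (ii).

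\textbf{(ii)$\Rightarrow$(i).} This is the delicate step. Fix a non-tangential approach $\lambda\to\zeta$. By (ii), each $f\in\clk_\theta$ gives a convergent, hence bounded, family $\ip{f}{k_\lambda^\theta}$. The uniform boundedness principle then gives $\sup_\lambda\|k_\lambda^\theta\|<\infty$. By the norm formula above, this bounds $\frac{1-|\theta(\lambda)|}{1-|\lambda|}$ along radial sequences, so $l<\infty$.

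\textbf{The value of $l$.} Weak convergence gives $\ip{k_\zeta^\theta}{k_\lambda^\theta}=k_\zeta^\theta(\lambda)\to\|k_\zeta^\theta\|^2=k_\zeta^\theta(\zeta)$. Evaluating the angular limit of
\[
\frac{1-\overline{\theta(\zeta)}\theta(z)}{1-\bar\zeta z}
\]
directly gives $\overline{\theta(\zeta)}\,\zeta\,\theta'(\zeta)$, whose modulus is $|\theta'(\zeta)|$. Since $k_\zeta^\theta(\zeta)=\|k_\zeta^\theta\|^2$ is positive, it equals $|\theta'(\zeta)|=l$.

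The main obstacle is (i)$\Rightarrow$(iii), which is the genuine Julia--Carath\'eodory content. For it I will rely on the standard references, as the statement itself does with \cite{GMR}.
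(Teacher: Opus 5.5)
Your argument is correct and follows the standard route. The paper gives no proof of its own and simply defers to \cite{GMR}, where the result is obtained as you do: Julia--Carath\'eodory, combined with the identity $\|k_\lambda^\theta\|^2=\frac{1-|\theta(\lambda)|^2}{1-|\lambda|^2}$, the uniform boundedness principle, and weak compactness of the kernels $k_\lambda^\theta$. The one point to state explicitly is that in (iii)$\Rightarrow$(ii) the convergence $\|k_\lambda^\theta\|^2\to|\theta'(\zeta)|$ relies on the converse half of Julia--Carath\'eodory (an ADC forces a finite angular limit of $\frac{1-|\theta(z)|}{1-|z|}$); this is part of the same cited theorem.
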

	
	\begin{remark}
		If $\zeta\in\T \setminus\sigma(S_\theta)$, then every $f \in \clk_\theta$ has a non-tangential limit at $\zeta$ and thus $\theta$ has an ADC there \cite[Chapter 7]{GMR}. Recall that for a finite Blaschke product $\sigma(S_\theta)$
		is the (finite) zero set of $\theta$, so $\sigma(S_\theta)\cap\T=\emptyset$. Consequently, $\theta$ has an ADC at every point of $\T$.
	\end{remark}
	
	\subsection{Truncated Toeplitz and Hankel operators}
	For $\vp\in L^\infty$, the truncated Toeplitz operator $A_\vp:\clk_\theta \rightarrow \clk_\theta$ is defined as
	\[
	A_{\vp}=P_{\clk_\theta} M_{\vp}\restr{}{\clk_\theta}
	\]
	If $\vp=z$, then $A_{\vp}=S_\theta$, the compressed shift. We will denote the space of all truncated Toeplitz operators on $\clk_\theta$ by $\clt_\theta$. It is well-known that $A_{\vp}^*=A_{\overline{\vp}}$ and thus $\clt_\theta$ is closed under adjoints \cite{Sarason}. All the truncated Toeplitz operators are $C_\theta$-symmetric \cite{GarciaPutinar}, that is,
	\[
	A_{\vp}^*=C_\theta A_{\vp}C_\theta.
	\]
	
	For any $\vp \in L^\infty$, define truncated Hankel operator $B_\vp: \clk_\theta \rightarrow \clk_\theta$ by
	\[
	B_{\vp}=P_{\clk_\theta} J\,P_- M_{\vp}\restr{}{\clk_\theta},
	\]
	where $J:L^2 \rightarrow L^2$ defined by
	\[
	(Jf)(z)=\overline{z}\,f(\overline{z}),
	\]
	is the \emph{flip} operator. Note that $J$ is
	a unitary involution of $L^2$ which maps $H^2$ into $H^2_-$ and $H^2_-$ into $H^2$. We write  $\clh_\theta$ for the set of all bounded truncated Hankel operators. For $\vp \in L^\infty$, denote $\vp^*=\overline{\vp(\bar{z})}$. Then $B_{\vp}^*=B_{\vp^*}$. In particular, $\clh_\theta$ is closed under taking adjoints.
	The classes in which we are interested  are:
	\[
	\cls_\theta=\clt_\theta+\clh_\theta\qquad \text{and} \qquad \cli_\theta=\clt_\theta\cap\clh_\theta.
	\]
	
	By Sarason's theorem \cite[Theorem 4.1]{Sarason}, $A\in\clb(\clk_\theta)$ is a truncated Toeplitz
	operator if and only if there exist $\psi,\chi\in \clk_\theta$ with
	\begin{equation}\label{eq:TTOchar}
		A-S_\theta A S_\theta^*=\psi\otimes k_0^\theta+k_0^\theta\otimes\chi,
	\end{equation}
	and by the Gu--Ma theorem \cite[Theorem 3.1]{GuMa}, $B\in\clb(\clk_\theta)$ is a truncated Hankel
	operator if and only if there exist $f,g\in \clk_\theta$ with
	\begin{equation}\label{eq:THOchar}
		B-S_\theta^* B S_\theta^*=f\otimes k_0^\theta+\wt{k}_0^\theta\otimes g.
	\end{equation}
	Accordingly, we write
	\[
	\clr_T=\{u\otimes k_0^\theta+k_0^\theta\otimes v:u,v\in \clk_\theta\},
	\qquad
	\clr_H=\{x\otimes k_0^\theta+\wt{k}_0^\theta\otimes y:x,y\in \clk_\theta\}.
	\]
	Both $\clr_T$ and $\clr_H$ are linear subspaces of $\clb(\clk_\theta)$.  When $\theta$ is a finite Blaschke product we write
	$n=\deg\theta=\dim\clk_\theta$. Following \cite{BM} we introduce the \emph{displacement operators}
	\begin{equation}\label{eq:disp}
		D_T(X)=X-S_\theta X S_\theta^{*},\qquad D_H(X)=X-S_\theta^{*}X S_\theta^{*}\qquad(X\in\clb(\clk_\theta)),
	\end{equation}
	so that, by \eqref{eq:TTOchar}--\eqref{eq:THOchar},
	\[
	A\in\clt_\theta\iff D_T(A)\in\clr_T
	\]
	and
	\[
	B\in\clh_\theta\iff D_H(B)\in\clr_H.
	\]
	The following result related to displacement operators is given by Bhuia and Majee in \cite{BM}.
	
	\begin{lemma}[{\cite[Lemmas 5.3 and 5.4]{BM}}]\label{lem:BMdisp}
		Let $X \in \clb(\clk_\theta)$. Then
		\begin{enumerate}
			\item The maps $D_T$ and $D_H$ are injective.
			\item The commutator $[D_H,D_T]$ satisfies
			\begin{equation}\label{eq:comm}
				[D_H,D_T](X)=k_0^\theta\otimes\bigl(S_\theta^{2}X^{*}k_0^\theta\bigr)
				-\wt{k}_0^\theta\otimes\bigl(S_\theta^{2}X^{*}\wt{k}_0^\theta\bigr).
			\end{equation}
		\end{enumerate}
	\end{lemma}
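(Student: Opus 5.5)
The plan is to prove the two parts separately. Injectivity comes from iterating the fixed-point equation and using that $S_\theta$ is of class $C_{00}$. The commutator formula is a direct expansion that uses the structural identities \eqref{eq:s1}--\eqref{eq:s2} and the tensor rule \eqref{eq:tensor}. Throughout, $[D_H,D_T]=D_HD_T-D_TD_H$.

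\emph{Injectivity.} Suppose first that $D_T(X)=0$, i.e.\ $X=S_\theta XS_\theta^*$. Substituting this equation into itself $n$ times gives $X=S_\theta^{n}X(S_\theta^*)^{n}$ for every $n\ge1$. Hence for each $f\in\clk_\theta$,
\[
\|Xf\|\le \|S_\theta^n\|\,\|X\|\,\|(S_\theta^*)^nf\|\le \|X\|\,\|(S_\theta^*)^nf\|,
\]
since $S_\theta$ is a contraction. Because $S_\theta$ is of class $C_{00}$, the right-hand side tends to $0$, so $X=0$. For $D_H$ the argument is the same: $X=S_\theta^*XS_\theta^*$ iterates to $X=(S_\theta^*)^nX(S_\theta^*)^n$, and again $\|Xf\|\le\|X\|\,\|(S_\theta^*)^nf\|\to0$. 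The only input here is the $C_{00}$ property recalled in the preliminaries, so this part is routine.

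\emph{Commutator.} First expand both compositions from \eqref{eq:disp}:
\[
D_HD_T(X)=X-S_\theta XS_\theta^*-S_\theta^*XS_\theta^*+S_\theta^*S_\theta XS_\theta^*S_\theta^*,
\]
\[
D_TD_H(X)=X-S_\theta^*XS_\theta^*-S_\theta XS_\theta^*+S_\theta S_\theta^*XS_\theta^*S_\theta^*.
\]
The first three terms cancel in the difference, leaving
\[
[D_H,D_T](X)=\bigl(S_\theta^*S_\theta-S_\theta S_\theta^*\bigr)XS_\theta^{*2}.
\]
By \eqref{eq:s1} and \eqref{eq:s2}, $S_\theta^*S_\theta-S_\theta S_\theta^*=k_0^\theta\otimes k_0^\theta-\wt{k}_0^\theta\otimes\wt{k}_0^\theta$. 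Now apply \eqref{eq:tensor} with $A=I$ and $C=XS_\theta^{*2}$, whose adjoint is $C^*=S_\theta^2X^*$. This turns $(p\otimes p)C$ into $p\otimes(S_\theta^2X^*p)$ for $p=k_0^\theta$ and $p=\wt{k}_0^\theta$, which yields exactly \eqref{eq:comm}.

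The only point that needs care is bookkeeping. One must keep the order of the non-commuting factors correct when expanding the two compositions. One must also move $C$ to the second slot of the tensor as the adjoint $S_\theta^2X^*$, not as $C$ itself. There is no genuine obstacle beyond this.
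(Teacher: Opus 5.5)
Your proof is correct. Injectivity follows from the iteration $X=S_\theta^{n}X(S_\theta^{*})^{n}$ (respectively $X=(S_\theta^{*})^{n}X(S_\theta^{*})^{n}$) together with the $C_{00}$ property, and \eqref{eq:comm} follows exactly from the reduction $[D_H,D_T](X)=(S_\theta^{*}S_\theta-S_\theta S_\theta^{*})X(S_\theta^{*})^{2}$ combined with \eqref{eq:s1}, \eqref{eq:s2} and \eqref{eq:tensor}. The paper gives no proof of this lemma and simply cites \cite{BM}; your argument is the natural direct verification and uses the same devices the paper uses elsewhere, namely the $C_{00}$ iteration in Lemma~\ref{lem:V1}(i) and the \eqref{eq:s2}-manipulation in the proof of Theorem~\ref{thm:comm}.
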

	
	When $\theta$ is a finite Blaschke product of degree $n$,
	$D_T$ and $D_H$ become bijections, so
	$\dim\clt_\theta=\dim\clr_T=2n-1$ because $\clr_T$ is the sum of the two
	$n$-dimensional spaces $\{u\otimes k_0^\theta\}$ and $\{k_0^\theta\otimes v\}$, intersecting in $\C\,(k_0^\theta\otimes k_0^\theta)$, and likewise $\dim\clh_\theta=\dim\clr_H=2n-1$.

	\newsection{Rigidity of the reproducing kernels}\label{sec:rigid}
	
	This section collects the linear-independence facts used throughout. The following results may be well known to experts; we are giving the proofs for the sake of completeness.
	
	\begin{lemma}\label{lem:cyclic}
		Let $\theta$ be a non-constant inner function with $\dim\clk_\theta\ge2$. Then the pairs
		$\{k_0^\theta,S_\theta k_0^\theta\}$ and $\{k_0^\theta,\wt{k}_0^\theta\}$ are each linearly
		independent.
	\end{lemma}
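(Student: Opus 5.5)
The plan is to argue by contradiction in both cases. In each case a linear dependence forces $\theta$ to divide, or to equal, a rational function of degree at most one, and then $\dim\clk_\theta\le1$. First observe that $k_0^\theta\neq0$, since $k_0^\theta(0)=1-|\theta(0)|^2>0$ for non-constant inner $\theta$. Hence it suffices to rule out $\wt{k}_0^\theta=c\,k_0^\theta$ and $S_\theta k_0^\theta=c\,k_0^\theta$ for a scalar $c$. Throughout write $a=\theta(0)$.

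\emph{The pair $\{k_0^\theta,\wt{k}_0^\theta\}$.} Suppose $\wt{k}_0^\theta=c\,k_0^\theta$. Using the explicit formulas, this reads $\theta(z)-a=c\,z\,(1-\overline{a}\,\theta(z))$, and solving for $\theta$ gives
\[
\theta(z)=\frac{cz+a}{1+c\,\overline{a}\,z}.
\]
There are three possibilities for this function. It is either constant, which is excluded. Or it is unimodular on $\T$ only if it is a unimodular multiple of a single Blaschke factor. In that case $\clk_\theta$ is one-dimensional, which contradicts $\dim\clk_\theta\ge2$. (If $|c|\ne1$ and $c\bar a$ make the function a non-inner Möbius-type map, then it is not inner at all, which is also a contradiction.)

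\emph{The pair $\{k_0^\theta,S_\theta k_0^\theta\}$.} Suppose $S_\theta k_0^\theta=c\,k_0^\theta$. Since $S_\theta k_0^\theta=P_{\clk_\theta}(zk_0^\theta)$ and $H^2=\clk_\theta\oplus\theta H^2$, we get $(z-c)k_0^\theta\in\theta H^2$. Thus
\[
(z-c)(1-\overline{a}\theta)=\theta h \quad\text{for some } h\in H^2,
\]
that is,
\[
z-c=\theta\bigl(h+\overline{a}(z-c)\bigr).
\]
So $\theta$ divides $z-c$ in $H^2$, which means $\theta$ divides the inner factor of $z-c$. That inner factor is trivial when $|c|\ge1$, because $z-c$ is then outer. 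When $|c|<1$ it is the single Blaschke factor $b_c$. Hence $\theta$ is either constant, which is excluded, or a unimodular multiple of $b_c$. In the latter case $\dim\clk_\theta=1$, again a contradiction.

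The only step needing care is the divisibility argument in the second case. It relies on the inner–outer factorization: if $\theta g=z-c$ with $g\in H^2$, then $\theta$ divides the inner part of $z-c$. One must also identify the inner part of $z-c$ correctly in the boundary case $|c|=1$, where $z-c$ is still outer. Everything else is direct algebra with the formulas for $k_0^\theta$ and $\wt{k}_0^\theta$.
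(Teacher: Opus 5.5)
Your proof is correct, but it takes a different route from the paper.

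\textbf{The paper's route.} The paper argues within the operator calculus. If $S_\theta k_0^\theta=\lambda k_0^\theta$, then the whole orbit $\{S_\theta^m k_0^\theta\}_{m\ge0}$ lies in $\C k_0^\theta$. Since $k_0^\theta$ is cyclic for $S_\theta$ (Sarason), this forces $\clk_\theta=\C k_0^\theta$. The second pair is then reduced to the first: apply $S_\theta$ to $\wt{k}_0^\theta=c\,k_0^\theta$ and use $S_\theta\wt{k}_0^\theta=-\theta(0)k_0^\theta$.

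\textbf{Your route.} You work directly with the explicit kernel formulas and treat the two pairs independently. For the conjugate kernel, you solve for $\theta$ as a M\"obius map. For the compressed shift, you turn $S_\theta k_0^\theta=c\,k_0^\theta$ into $(z-c)k_0^\theta\in\theta H^2$, then into $z-c=\theta g$ with $g\in H^2$. Inner--outer factorization then shows that $\theta$ divides the inner part of $z-c$.

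\textbf{Comparison.} The paper's argument is shorter, but it uses the cyclicity of $k_0^\theta$ as a black box. Yours is self-contained from the kernel formulas plus classical factorization facts (uniqueness of inner--outer factorization, outerness of $z-c$ for $|c|\ge1$). It also gives more: it identifies exactly when dependence can occur, namely only when $\theta$ is a unimodular multiple of a single Blaschke factor.

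\textbf{One place to tighten.} Your ``three possibilities'' sentence in the first case is garbled. Say explicitly that $c\neq0$, since otherwise $\theta\equiv a$ is constant. Then the map $(cz+a)/(1+c\bar a z)$ has nonzero determinant $c(1-|a|^2)$. On $\T$ one has $|cz+a|^2-|1+c\bar a z|^2=(|c|^2-1)(1-|a|^2)$, so innerness forces $|c|=1$. This also covers a possible pole of the denominator in $\D$. Finally, with $w=-a\bar c\in\D$ you get $\theta=c\,(z-w)/(1-\bar w z)$, a single Blaschke factor, so $\dim\clk_\theta=1$.
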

	
	\begin{proof}
		Suppose first that $S_\theta k_0^\theta=\lambda k_0^\theta$ for some $\lambda\in\C$. Applying
		$S_\theta$ repeatedly gives $S_\theta^{m}k_0^\theta=\lambda^{m}k_0^\theta$, so
		$S_\theta^{m}k_0^\theta\in\spn\{k_0^\theta\}$ for every $m\ge0$. Hence any
		$f\in \spn\{k_0^\theta\}^\perp$ is orthogonal to the whole orbit $\{S_\theta^{m}k_0^\theta\}_{m\ge0}$,
		and therefore to its closed linear span. Since $k_0^\theta$ is a cyclic vector for $S_\theta$
		\cite[Lemma 2.3]{Sarason}, that is,
		\[
		\overline{\operatorname{span}} \{S_\theta^n k_0^\theta: n \geq 0\}=\clk_\theta,
		\]
		so $f=0$. Thus
		$\clk_\theta=\spn\{k_0^\theta\}$ is one-dimensional, which is a contradiction.
		
		Now suppose $\wt{k}_0^\theta=c\,k_0^\theta$. Since $\theta$ is non-constant, $|\theta(0)|<1$ and
		\[
		\norm{k_0^\theta}^{2}=\norm{\wt{k}_0^\theta}^{2}=1-|\theta(0)|^{2}>0,
		\]
		so both vectors are
		non-zero and $c\ne0$. Applying $S_\theta$ and using  \eqref{eq:s4} gives $c\,S_\theta k_0^\theta=-\theta(0)k_0^\theta$, whence
		$S_\theta k_0^\theta\in\spn\{k_0^\theta\}$, contradicting the first part.
	\end{proof}
	
	\begin{lemma}\label{lem:indep}
		Let $\theta$ be a finite Blaschke product of degree $n$ and let $\lambda_{1},\dots,\lambda_{m}$ be
		distinct points of $\overline{\D}$ with $m\le n$. Then the reproducing kernels
		$k_{\lambda_{1}}^{\theta},\dots,k_{\lambda_{m}}^{\theta}$ are linearly independent in
		$\clk_\theta$.
	\end{lemma}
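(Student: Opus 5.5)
Our plan is to reduce the statement to the invertibility of a Pick-type matrix built from the kernels, in the spirit of Lagrange interpolation. First we make sense of the boundary kernels. For $\lambda\in\T$ we use the Remark following Lemma \ref{lem:Julia}: $\theta$ is a finite Blaschke product, so $\sigma(S_\theta)\cap\T=\emptyset$ and $\theta$ has an ADC at every point of $\T$. Hence, by Lemma \ref{lem:Julia}, every $f\in\clk_\theta$ has a (non-tangential, and in fact genuine, since $\clk_\theta$ consists of rational functions analytic across $\T$) boundary value $f(\lambda)$. In this case $k_\lambda^\theta(z)=\frac{1-\overline{\theta(\lambda)}\theta(z)}{1-\overline{\lambda}z}$ is a rational function with a removable singularity at $z=\lambda$, since $|\theta(\lambda)|=1$. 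It lies in $\clk_\theta$ and reproduces point evaluation, $\ip{f}{k_\lambda^\theta}=f(\lambda)$. To justify the reproducing property we take the limit $k_{r\lambda}^\theta\to k_\lambda^\theta$ as $r\to1^-$; this convergence holds in norm because everything lives in the finite-dimensional space $\clk_\theta$ and the formula depends continuously on $r$, with a uniformly removable singularity. So all $k_{\lambda_j}^\theta$, $\lambda_j\in\overline{\D}$, are genuine reproducing kernels in $\clk_\theta$.

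Next, suppose $\sum_{j=1}^m c_j k_{\lambda_j}^\theta=0$. Pairing with any $f\in\clk_\theta$ gives $\sum_j \overline{c_j}\,f(\lambda_j)=0$ for all $f$. It therefore suffices to produce, for each $i$, a function $f_i\in\clk_\theta$ with $f_i(\lambda_j)=\delta_{ij}$. We use the explicit description $\clk_\theta=\{p/\prod_{k=1}^n(1-\overline{a_k}z): \deg p\le n-1\}$, where $a_1,\dots,a_n$ are the zeros of $\theta$ counted with multiplicity. This is standard, and we can verify it by dimension count, since each $k_{a_k}^\theta$ and the higher-order kernels lie in this space and $\dim\clk_\theta=n$. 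The denominator $q(z)=\prod_k(1-\overline{a_k}z)$ does not vanish on $\overline{\D}$. We then take $f_i=p_i/q$, where $p_i$ is the Lagrange polynomial of degree $\le m-1\le n-1$ satisfying $p_i(\lambda_j)=\delta_{ij}q(\lambda_j)$. This interpolation is possible because the $\lambda_j$ are distinct and $m\le n$. Testing against $f_i$ then forces $c_i=0$ for every $i$.

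We expect the only delicate point to be the boundary case: checking that $k_\lambda^\theta$ for $|\lambda|=1$ really belongs to $\clk_\theta$ and reproduces the boundary value. The explicit rational description handles this directly, since every element of $\clk_\theta$ is continuous on $\overline{\D}$ and the limit $r\to1$ is harmless.
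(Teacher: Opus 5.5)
Your proof is correct and is essentially the paper's: both use $\clk_\theta=\{p/q:\deg p\le n-1\}$ with $q=\prod_k(1-\overline{a_k}z)$ nonvanishing on $\overline{\D}$, and use Lagrange interpolation of degree $\le m-1\le n-1$ to show that evaluation at $\lambda_1,\dots,\lambda_m$ maps $\clk_\theta$ onto $\C^m$, which forces every coefficient to vanish (your dual functions $f_i$ with $f_i(\lambda_j)=\delta_{ij}$ express exactly this surjectivity). The only differences are that you justify that the boundary kernels lie in $\clk_\theta$ and reproduce boundary values, which the paper takes for granted from the ADC remark and the Ahern--Clark discussion, and that you mention a Pick-type matrix which your argument never actually uses.
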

	
	\begin{proof}
		Write $\theta=\prod\limits_{j=1}^{n}\dfrac{a_j-z}{1-\overline{a_j}z}$, $a_j \in \D$ for all $j=1,\ldots, n$ and
		$D(z)=\prod\limits_{j=1}^{n}(1-\overline{a_j}z)$, so that $D$ has no zeros in $\overline{\D}$ and
		\[
		\clk_\theta=\Bigl\{\tfrac{p}{D}: p\ \text{a polynomial},\ \deg p\le n-1\Bigr\} .
		\]
		The evaluation map $E(f)=(f(\lambda_1),\dots,f(\lambda_m))$ is onto $\C^{m}$: given
		$(w_1,\dots,w_m)$, Lagrange interpolation produces a polynomial $p$ with $\deg p\le m-1\le n-1$
		and $p(\lambda_i)=w_iD(\lambda_i)$, and then $f=p/D\in\clk_\theta$ satisfies $f(\lambda_i)=w_i$.
		
		Now if $\sum\limits_{i=1}^m c_i\,k_{\lambda_i}^{\theta}=0$, then for $f \in \clk_\theta$,
		\begin{equation}\label{eq:ip}
			0=\left\langle f, \sum_{i=0}^m c_i k_{\lambda_i}^{\theta} \right\rangle=\sum_{i=0}^m \overline{c_i}\,f(\lambda_i).
		\end{equation}
		As $E$ is surjective, \eqref{eq:ip} implies that the  functional $(w_1,\dots,w_m)\mapsto\sum_i\overline{c_i}w_i$ vanishes on $\C^{m}$, so $c_i=0$ for  every $i$.
	\end{proof}
	
	For inner functions that are not finite Blaschke products, the analogous statement is due to Bessonov \cite{RB}. In the following, $\Omega$ denotes the set of points of $\T$ at which $\theta$ has an ADC.
	
	\begin{lemma}[{\cite[Proposition~2.8]{RB}}]\label{lem:bessonov}
		Suppose $\theta$ is not a finite Blaschke product, and let $\lambda_1,\dots,\lambda_m$ be distinct
		points of $\D\cup\Omega$. Then the reproducing kernels $k_{\lambda_1}^\theta,\dots,k_{\lambda_m}^\theta$
		are linearly independent in $\clk_\theta$.
	\end{lemma}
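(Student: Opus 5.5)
This lemma is quoted from Bessonov \cite[Proposition~2.8]{RB}, so in the paper it needs only the citation. A self-contained argument would adapt the proof of Lemma~\ref{lem:indep}, replacing Lagrange interpolation by an interpolation statement valid in $\clk_\theta$. Suppose $\sum_{i=1}^m c_i k_{\lambda_i}^\theta=0$. Lemma~\ref{lem:Julia} shows that for $\lambda_i\in\Omega$ the boundary kernel $k_{\lambda_i}^\theta$ lies in $\clk_\theta$ and reproduces the non-tangential boundary value, $f(\lambda_i)=\ip{f}{k_{\lambda_i}^\theta}$. Pairing with $f\in\clk_\theta$ therefore gives $\sum_i\overline{c_i}f(\lambda_i)=0$ for every $f\in\clk_\theta$. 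It then suffices to show that the evaluation map $E(f)=(f(\lambda_1),\dots,f(\lambda_m))$ from $\clk_\theta$ to $\C^m$ is onto.

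Surjectivity reduces to finding, for each $j$, some $f_j\in\clk_\theta$ with $f_j(\lambda_j)\ne0$ and $f_j(\lambda_i)=0$ for $i\ne j$. Since $\theta$ is not a finite Blaschke product, $\clk_\theta$ is infinite-dimensional. I would build $f_j$ as an analytic multiple of a kernel. Take $g=k_{\lambda_j}^\theta$ and the polynomial $p(z)=\prod_{i\ne j}(z-\lambda_i)$.

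The function $pg$ need not lie in $\clk_\theta$, so I would use the truncated Toeplitz operator $A_p=p(S_\theta)$ instead. The vector $f_j=p(S_\theta)k_{\lambda_j}^\theta$ satisfies $\ip{f_j}{k_{\lambda_i}^\theta}=\ip{k_{\lambda_j}^\theta}{\overline{p}(S_\theta^*)k_{\lambda_i}^\theta}$. For $\lambda_i\in\D$ we have $S_\theta^*k_{\lambda_i}^\theta=\overline{\lambda_i}k_{\lambda_i}^\theta-\overline{\theta(\lambda_i)}\,\wt{k}_0^\theta$. The correction terms in $\wt{k}_0^\theta$ spoil exact vanishing, so this route requires control of those terms.

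A cleaner route uses the Gram matrix $G=(k_{\lambda_j}^\theta(\lambda_i))_{i,j}=\bigl(\ip{k_{\lambda_j}^\theta}{k_{\lambda_i}^\theta}\bigr)$. Independence is equivalent to $G$ being positive definite. I would obtain $G$ as a limit of Pick matrices of $\theta$. The form $\frac{1-\overline{\theta(\mu)}\theta(\lambda)}{1-\overline{\mu}\lambda}$ is a positive kernel, and it is strictly positive definite on finitely many points of $\D$ unless $\theta$ is a Blaschke product of degree less than $m$ (the classical Pick uniqueness theorem). Interior points are handled directly. Boundary points $\lambda_i\in\Omega$ are handled by radial limits $r\lambda_i$ with $r\to1^-$, using the convergence $k_{r\zeta}^\theta\to k_\zeta^\theta$ in norm, which holds at ADC points by Lemma~\ref{lem:Julia}.

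The main obstacle is that positive definiteness can degenerate in the limit. I would rule this out by a Julia--Carath\'eodory argument showing that the determinant stays bounded below. Failing that, I would fall back on citing Bessonov's proof, which is what the paper itself does.
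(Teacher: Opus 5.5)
Citing Bessonov is all the paper does for this lemma, so on that point you agree with it. The self-contained argument you sketch, however, is not a proof. You abandon the $p(S_\theta)$ construction. In the Gram-matrix route, the only case with real content, $\lambda_i\in\Omega$, is left open. Your interior step via Pick uniqueness is fine. At boundary points, though, $G$ is only a limit of positive definite matrices $G_r$, and such limits can be singular. Entrywise convergence $G_r\to G$ says nothing about the smallest eigenvalue of $G_r$ as $r\to1^-$. The promised Julia--Carath\'eodory lower bound on $\det G_r$ has no mechanism behind it; establishing it would amount to a boundary Nevanlinna--Pick uniqueness theorem, which is harder than the lemma itself.

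The missing idea is to use the explicit formula $k_\lambda^\theta(z)=\bigl(1-\overline{\theta(\lambda)}\,\theta(z)\bigr)/(1-\overline{\lambda}z)$. It holds for every $\lambda\in\D\cup\Omega$; for $\lambda\in\Omega$ this is the Ahern--Clark description recalled at the start of Section~\ref{sec:upper}. With it you can treat interior and boundary points together, with no interpolation and no limits. If $\sum_i c_i k_{\lambda_i}^\theta=0$, then
\[
R_1(z):=\sum_{i=1}^{m}\frac{c_i}{1-\overline{\lambda_i}\,z}
=\theta(z)\sum_{i=1}^{m}\frac{c_i\,\overline{\theta(\lambda_i)}}{1-\overline{\lambda_i}\,z}=:\theta(z)\,R_2(z),
\qquad z\in\D .
\]
If $R_2\not\equiv0$, then $\theta=R_1/R_2$ is a rational inner function, that is, a finite Blaschke product, which is excluded. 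Hence $R_2\equiv0$, and therefore $R_1\equiv0$ as a rational function. Since the $\lambda_i$ are distinct, each term with $\lambda_i\neq0$ has a simple pole at its own point $1/\overline{\lambda_i}$, with residue $-c_i/\overline{\lambda_i}$. So all those $c_i$ vanish, and then the remaining constant term (from $\lambda_i=0$, if present) vanishes as well. This argument uses the hypothesis that $\theta$ is not a finite Blaschke product exactly where it is needed, and it needs no restriction on $m$.
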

	
	\begin{corollary}\label{cor:kindep}
		Let $\theta$ be inner and let $\lambda_1,\dots,\lambda_m$ be distinct points of $\D\cup\Omega$
		with $m\le\dim\clk_\theta$. Then $k_{\lambda_1}^\theta,\dots,k_{\lambda_m}^\theta$ are linearly
		independent. In particular, if $\theta$ has an ADC at $\pm1$, then $k_1^\theta,k_{-1}^\theta$ are
		linearly independent when $\dim\clk_\theta\ge2$, and $k_0^\theta,k_1^\theta,k_{-1}^\theta$ are
		linearly independent when $\dim\clk_\theta\ge3$.
	\end{corollary}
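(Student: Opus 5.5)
The corollary follows by splitting into two cases according to whether $\theta$ is a finite Blaschke product, and applying the matching earlier result. First, note that $k_\lambda^\theta$ makes sense as an element of $\clk_\theta$ for $\lambda\in\Omega$. Indeed, by Lemma~\ref{lem:Julia}, every $f\in\clk_\theta$ has a non-tangential limit at $\lambda$. The corresponding boundary kernel $k_\lambda^\theta$ lies in $\clk_\theta$ and reproduces these limits, with $\|k_\lambda^\theta\|^2=|\theta'(\lambda)|$. This is the standard fact that Lemma~\ref{lem:Julia} encodes, and it is the implicit meaning of the kernels in Lemma~\ref{lem:bessonov}.

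\emph{Case 1: $\theta$ is a finite Blaschke product of degree $n$.} Then $\dim\clk_\theta=n$. By the Remark after Lemma~\ref{lem:Julia}, $\Omega=\T$, so $\D\cup\Omega=\overline{\D}$. The hypothesis $m\le\dim\clk_\theta=n$ is exactly the hypothesis of Lemma~\ref{lem:indep}, which gives linear independence directly.

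\emph{Case 2: $\theta$ is not a finite Blaschke product.} Then $\dim\clk_\theta=\infty$, so the constraint on $m$ is vacuous, and Lemma~\ref{lem:bessonov} applies verbatim to the distinct points $\lambda_1,\dots,\lambda_m\in\D\cup\Omega$.

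For the ``in particular'' part, suppose $\theta$ has an ADC at $1$ and $-1$, so that $\pm1\in\Omega$. The points $1$ and $-1$ are distinct, and $0,1,-1$ are distinct points of $\D\cup\Omega$. Taking $m=2$ when $\dim\clk_\theta\ge2$, and $m=3$ when $\dim\clk_\theta\ge3$, the general statement yields the two claimed independence results.

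The only real subtlety is the bookkeeping that the boundary kernels at ADC points are genuine elements of $\clk_\theta$. This is also what allows Lemma~\ref{lem:indep} to be applied at points of $\T$ in the finite Blaschke case: there $k_\lambda^\theta$ is a rational function with poles outside $\overline{\D}$. Both the interpolation argument of Lemma~\ref{lem:indep} and the reproducing property $\ip{f}{k_\lambda^\theta}=f(\lambda)$ remain valid for $|\lambda|=1$, since every $f=p/D$ is continuous on $\overline{\D}$. So no step should pose a real obstacle.
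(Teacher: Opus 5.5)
Your proposal is correct and follows the paper's proof exactly. You split according to whether $\theta$ is a finite Blaschke product (then $\D\cup\Omega=\overline{\D}$ and Lemma~\ref{lem:indep} applies) or not (then Lemma~\ref{lem:bessonov} applies), and then specialize to $m=2$ with $\{1,-1\}$ and $m=3$ with $\{0,1,-1\}$; your added check that boundary kernels at points of $\T$ are genuine elements of $\clk_\theta$ is a harmless justification the paper leaves implicit.
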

	
	\begin{proof}
		If $\theta$ is a finite Blaschke product of degree $n$ then $\dim\clk_\theta=n$ and 	$\D\cup\Omega=\overline{\D}$, so this is Lemma~\ref{lem:indep}; otherwise it is 	Lemma~\ref{lem:bessonov}. The two special cases are $m=2$
		with $\{1,-1\}$ and $m=3$ with $\{0,1,-1\}$.
	\end{proof}
	
	\newsection{The intersection \texorpdfstring{$\cli_\theta$}{I-theta}}\label{sec:upper}
	
	If $\theta$ has an ADC at $\zeta\in\T$ then every function in $\clk_\theta$ has a non-tangential limit at $\zeta$ and the limit defines a
	bounded functional on $\clk_\theta$. Thus there is a corresponding kernel function  $k_\zeta^\theta \in \clk_\theta$, the conjugate kernel
	$\wt{k}_\zeta^\theta=C_\theta k_\zeta^\theta$ also lies in $\clk_\theta$, and
	$\norm{k_\zeta^\theta}^{2}=|\theta'(\zeta)|<\infty$; this is the Ahern--Clark theorem
	\cite{AhernClark} (see also \cite[Ch.~7]{GMR}). We have
	\[
	k_\zeta^\theta (z)=\frac{1-\overline{\theta(\zeta)}\theta}{1-\bar{\zeta}z}.
	\]
	We shall also use the boundary relation
	\begin{equation}\label{eq:bdryconj}
		\wt{k}_\zeta^\theta=\overline{\zeta}\,\theta(\zeta)\,k_\zeta^\theta,\qquad \zeta\in\T,
	\end{equation}
	which is immediate from the explicit formulas once $|\zeta|=|\theta(\zeta)|=1$. \vspace{5mm}
	
	The object of this section is the following theorem.
	
	\begin{theorem}\label{thm:main}
		Let $\theta$ be an inner function with $\dim\clk_\theta\ge 2$ having an angular derivative in the
		sense of Carath\'eodory at $z=1$ and at $z=-1$. Then
		\[
		\cli_\theta=\spn\{k_1^\theta \otimes k_1^\theta,k_{-1}^\theta\otimes k_{-1}^\theta\},
		\qquad\text{so in particular},\qquad
		\dim\cli_\theta=2 .
		\]
	\end{theorem}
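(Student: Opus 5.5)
The proof has two halves: first show that the two rank-one operators $k_{\pm1}^\theta\otimes k_{\pm1}^\theta$ lie in $\cli_\theta$, then show that every $X\in\cli_\theta$ is a combination of them. The dimension count then follows from Corollary~\ref{cor:kindep}. Since $k_1^\theta,k_{-1}^\theta$ are linearly independent when $\dim\clk_\theta\ge2$, the operators $k_1^\theta\otimes k_1^\theta$ and $k_{-1}^\theta\otimes k_{-1}^\theta$ are linearly independent.

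\emph{Inclusion.} Fix $\zeta\in\{1,-1\}$ and write $k_\zeta=k_\zeta^\theta$.

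First I compute the action of $S_\theta$ on $k_\zeta$. From $(1-\overline{\zeta}z)k_\zeta=1-\overline{\theta(\zeta)}\theta$ we get
\[
zk_\zeta=\zeta k_\zeta-\zeta k_0^\theta+\zeta\bigl(\overline{\theta(0)}-\overline{\theta(\zeta)}\bigr)\theta .
\]
Projecting onto $\clk_\theta$ kills the last term and gives $S_\theta k_\zeta=\zeta(k_\zeta-k_0^\theta)$.

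Next I compute $S_\theta^*k_\zeta$. I use $S_\theta^*=C_\theta S_\theta C_\theta$ together with the boundary relation \eqref{eq:bdryconj}. This yields
\[
S_\theta^*k_\zeta=\overline{\zeta}k_\zeta-\overline{\theta(\zeta)}\,\wt{k}_0^\theta .
\]

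With these two formulas and \eqref{eq:tensor}, the Toeplitz displacement is
\[
D_T(k_\zeta\otimes k_\zeta)=k_0^\theta\otimes k_\zeta+k_\zeta\otimes k_0^\theta-k_0^\theta\otimes k_0^\theta\in\clr_T .
\]
For the Hankel displacement, the operator $S_\theta^*(k_\zeta\otimes k_\zeta)S_\theta^*$ produces the scalar factor $\overline{\zeta}\cdot\overline{\zeta}=\overline{\zeta}^{\,2}$. This equals $1$ precisely because $\zeta=\pm1$, and this is exactly where the restriction to $\pm1$ enters. The result is
\[
D_H(k_\zeta\otimes k_\zeta)=k_\zeta\otimes k_0^\theta+\zeta\overline{\theta(\zeta)}\,\wt{k}_0^\theta\otimes(k_\zeta-k_0^\theta)\in\clr_H .
\]
By \eqref{eq:TTOchar} and \eqref{eq:THOchar}, $k_\zeta\otimes k_\zeta\in\cli_\theta$.

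\emph{Reverse inclusion.} Let $X\in\cli_\theta$. Then there are $\psi,\chi,f,g\in\clk_\theta$ with
\[
D_T(X)=\psi\otimes k_0^\theta+k_0^\theta\otimes\chi,\qquad D_H(X)=f\otimes k_0^\theta+\wt{k}_0^\theta\otimes g .
\]
I apply $D_H$ to the first identity and $D_T$ to the second, then subtract. By Lemma~\ref{lem:BMdisp}(ii) the difference equals
\[
k_0^\theta\otimes S_\theta^2X^*k_0^\theta-\wt{k}_0^\theta\otimes S_\theta^2X^*\wt{k}_0^\theta .
\]
Every term is then a rank-one tensor whose left or right slot is one of $k_0^\theta$, $S_\theta k_0^\theta$, $\wt{k}_0^\theta$, or $S_\theta^*$, $S_\theta$ applied to these. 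I sort the terms by their left and right factors, using Lemma~\ref{lem:cyclic} and \eqref{eq:s3}--\eqref{eq:s4}, and compare coefficients. From this I expect to extract linear relations tying $\psi,\chi,f,g$ together. The key relations should be eigen-type equations of the form
\[
S_\theta^*v=\overline{\zeta}v+c\,\wt{k}_0^\theta\quad\text{or}\quad S_\theta v=\zeta v+c\,k_0^\theta ,
\]
which must hold for certain combinations $v$ of $\psi,\chi,f,g$, where the constraint $\zeta^2=1$ again comes from the mixed $S_\theta$/$S_\theta^*$ displacement.

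I then solve these equations in $\clk_\theta$ by writing them as functional equations. The equation $(z-\zeta)v=\dots$ forces $v$ to have the form $\alpha k_0^\theta+\beta k_\zeta^\theta$. Membership of $k_\zeta^\theta$ in $\clk_\theta$ is exactly where the ADC hypothesis at $\pm1$ is used. This reduces $D_T(X)$ to a combination of the displacements $D_T(k_{\pm1}\otimes k_{\pm1})$ computed in the inclusion step, possibly plus a multiple of $k_0^\theta\otimes k_0^\theta$. I would then use the THO condition once more to kill any leftover $k_0^\theta\otimes k_0^\theta$ term, which does not come from an element of $\cli_\theta$. Finally, injectivity of $D_T$ (Lemma~\ref{lem:BMdisp}(i)) gives $X\in\spn\{k_1\otimes k_1,k_{-1}\otimes k_{-1}\}$.

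I expect the coefficient comparison in the reverse inclusion to be the main obstacle. When $\dim\clk_\theta=2$ the vectors $k_0^\theta$, $S_\theta k_0^\theta$, $\wt{k}_0^\theta$ are dependent, so that case may need a separate direct computation. For $\dim\clk_\theta\ge3$ I would try first to use the independence of $k_0^\theta,k_1^\theta,k_{-1}^\theta$ from Corollary~\ref{cor:kindep} to make the coefficient matching rigorous.
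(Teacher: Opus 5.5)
\textbf{Inclusion half.} Your argument here is correct and is the paper's argument: Propositions~\ref{prop:TTO} and \ref{prop:Hdich}(i), with independence of $P_{\pm1}$ from Corollary~\ref{cor:kindep}. The $\theta$-term in your expansion of $zk_\zeta$ should read $\zeta(\overline{\theta(\zeta)}-\overline{\theta(0)})\theta$, but this is harmless since it is projected away.

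\textbf{Reverse inclusion: the gap.} This is the real content of the theorem, and it is not proved. The eigen-type equations, the vectors $v$, and the reduction to $D_T(P_{\pm1})$ are only announced. Concretely, the two expansions of $D_HD_T(X)$ differ by $[D_H,D_T](X)$. Its terms $k_0^\theta\otimes S_\theta^2X^*k_0^\theta$ and $\wt{k}_0^\theta\otimes S_\theta^2X^*\wt{k}_0^\theta$ carry the unknown vectors $X^*k_0^\theta$ and $X^*\wt{k}_0^\theta$, and sorting tensors by their slots does not eliminate them. What the comparison yields cleanly, for $\dim\clk_\theta\ge3$, is only Lemma~\ref{lem:defect}: $\psi-f$ and $S_\theta^*\psi-S_\theta f$ lie in $E=\spn\{k_0^\theta,\wt{k}_0^\theta\}$.

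Pushing this through gives $\psi\in\spn\{k_0^\theta,k_1^\theta,k_{-1}^\theta\}$. Applying the same to $X^*\in\cli_\theta$ gives the same for $\chi$, but with unrelated coefficients. For a finite Blaschke product of degree $\ge3$, the truncated Toeplitz operators with such defects form a five-dimensional space. It contains $I$, since $D_T(I)=k_0^\theta\otimes k_0^\theta$. So the Hankel condition must still remove three dimensions, not just ``a leftover multiple of $k_0^\theta\otimes k_0^\theta$''. Nothing in your outline produces the matching between the coefficients of $\psi$ and $\chi$ that your endpoint presupposes. The case $\dim\clk_\theta=2$ is also left open.

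\textbf{The missing idea.} Constrain $X$ itself rather than its defect vectors. Subtracting the two displacement identities gives
$(S_\theta^*-S_\theta)XS_\theta^*=D_T(X)-D_H(X)=(\psi-f)\otimes k_0^\theta+k_0^\theta\otimes\chi-\wt{k}_0^\theta\otimes g$.
Once $\psi-f\in E$, its range lies in $E$. The argument then runs as follows.
\begin{enumerate}
\item When $\theta(0)\ne0$, $S_\theta^*$ is onto, so $(S_\theta^*-S_\theta)(\ran X)\subseteq E$.
\item Applying $S_\theta$ and using \eqref{eq:s1}, \eqref{eq:s4} turns this into $(I-S_\theta^2)(\ran X)\subseteq\spn\{k_0^\theta,S_\theta k_0^\theta\}$.
\item $I-S_\theta^2$ is injective by the $C_{00}$ property, and $(I-S_\theta^2)k_{\pm1}^\theta=k_0^\theta\pm S_\theta k_0^\theta$; this is where the ADC at $\pm1$ enters. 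Hence $\ran X\subseteq\spn\{k_1^\theta,k_{-1}^\theta\}$ (Lemma~\ref{lem:V1}).
\item The $C_\theta$-symmetry $X^*=C_\theta XC_\theta$, together with \eqref{eq:bdryconj}, gives the same inclusion for $\ran X^*$. So $X$ is a combination of the four operators $k_i^\theta\otimes k_j^\theta$, $i,j\in\{1,-1\}$.
\item The cross terms vanish because $D_T(k_i^\theta\otimes k_j^\theta)\equiv 2\,k_i^\theta\otimes k_j^\theta$ modulo $\clr_T$ for $i\ne j$. Test this on an $h$ with $\ip{h}{k_0^\theta}=\ip{h}{k_i^\theta}=0\ne\ip{h}{k_j^\theta}$, which exists by Corollary~\ref{cor:kindep}.
\end{enumerate}
The case $\theta(0)=0$ reduces to this one via a Crofoot transform (Remark~\ref{rem:crofoot}). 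The case $\dim\clk_\theta=2$ follows by counting. Here $\dim\clt_\theta=\dim\clh_\theta=3$ inside the four-dimensional $\clb(\clk_\theta)$. Also $\clt_\theta\ne\clh_\theta$, by Proposition~\ref{prop:Hdich}(ii) at some $\zeta\in\T\setminus\{\pm1\}$. Hence $\dim\cli_\theta=2$.
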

	
	The following lemma will be used in our results repeatedly and can be found in \cite{Sarason}. We are giving the proof for the sake of completeness.
	
	\begin{lemma}[{\cite[\S 2]{Sarason}}]\label{lem:bdry}
		Let $\zeta\in\T$ be a point at which $\theta$ has an ADC. Then
		\[
		S_\theta k_\zeta^\theta=\zeta\,(k_\zeta^\theta-k_0^\theta),\qquad
		S_\theta^* k_\zeta^\theta=\overline{\zeta}\,k_\zeta^\theta-\overline{\theta(\zeta)}\,\wt{k}_0^\theta.
		\]
	\end{lemma}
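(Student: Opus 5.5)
We prove the two identities of Lemma~\ref{lem:bdry} by a direct computation with the explicit boundary kernel
\[
k_\zeta^\theta(z)=\frac{1-\overline{\theta(\zeta)}\theta(z)}{1-\bar\zeta z},
\]
which lies in $\clk_\theta$ by the Ahern--Clark theorem. We use that $S_\theta f=P_{\clk_\theta}(zf)$ and $S_\theta^*f=P_{\clk_\theta}(\bar z f)$. For $f\in\clk_\theta\subset H^2$ the latter reduces to the backward shift: $S_\theta^* f=(f-f(0))/z$, because $\clk_\theta$ is backward-shift invariant.

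For the first identity, we compute in $H^2$:
\[
zk_\zeta^\theta-\zeta k_\zeta^\theta=\frac{(z-\zeta)\bigl(1-\overline{\theta(\zeta)}\theta\bigr)}{1-\bar\zeta z}=-\zeta\bigl(1-\overline{\theta(\zeta)}\theta\bigr).
\]
Hence
\[
zk_\zeta^\theta=\zeta k_\zeta^\theta-\zeta+\zeta\overline{\theta(\zeta)}\theta .
\]
Since $k_0^\theta=1-\overline{\theta(0)}\theta$, we rewrite $1=k_0^\theta+\overline{\theta(0)}\theta$, which gives
\[
zk_\zeta^\theta=\zeta\bigl(k_\zeta^\theta-k_0^\theta\bigr)+c\,\theta
\]
for a scalar $c$. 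Projecting onto $\clk_\theta$ kills $c\theta\in\theta H^2$, so $S_\theta k_\zeta^\theta=\zeta(k_\zeta^\theta-k_0^\theta)$. One subtlety is that $zk_\zeta^\theta\in H^2$, which is clear since $k_\zeta^\theta\in H^2$.

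For the second identity, we first evaluate at the origin: $k_\zeta^\theta(0)=1-\overline{\theta(\zeta)}\theta(0)$. Then
\[
k_\zeta^\theta-k_\zeta^\theta(0)=\frac{1-\overline{\theta(\zeta)}\theta-(1-\bar\zeta z)\bigl(1-\overline{\theta(\zeta)}\theta(0)\bigr)}{1-\bar\zeta z}.
\]
Expanding the numerator gives
\[
-\overline{\theta(\zeta)}\bigl(\theta-\theta(0)\bigr)+\bar\zeta z\bigl(1-\overline{\theta(\zeta)}\theta(0)\bigr).
\]
It is cleaner to write
\[
k_\zeta^\theta(1-\bar\zeta z)=1-\overline{\theta(\zeta)}\theta,
\]
so that
\[
k_\zeta^\theta-\bar\zeta zk_\zeta^\theta=1-\overline{\theta(\zeta)}\theta.
\]
Subtracting the value at $0$ gives
\[
\bigl(k_\zeta^\theta-k_\zeta^\theta(0)\bigr)-\bar\zeta zk_\zeta^\theta=-\overline{\theta(\zeta)}\bigl(\theta-\theta(0)\bigr).
\]
Dividing by $z$ and recalling $\wt{k}_0^\theta=(\theta-\theta(0))/z$ yields
\[
S_\theta^*k_\zeta^\theta=\bar\zeta k_\zeta^\theta-\overline{\theta(\zeta)}\,\wt{k}_0^\theta,
\]
as claimed.

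The only point needing care is justifying that $k_\zeta^\theta\in\clk_\theta$, so that the backward-shift formula applies; this follows from Lemma~\ref{lem:Julia} together with Ahern--Clark. Alternatively, one can argue by approximation: take $\lambda\to\zeta$ nontangentially, verify the corresponding identities for $k_\lambda^\theta$ with $\lambda\in\D$, and pass to the limit using $k_\lambda^\theta\to k_\zeta^\theta$ weakly. We expect no real obstacle beyond this bookkeeping.
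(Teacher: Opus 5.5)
Your proof is correct. The first identity is obtained exactly as in the paper: $z-\zeta=-\zeta(1-\overline{\zeta}z)$ (using $|\zeta|=1$) gives $zk_\zeta^\theta=\zeta k_\zeta^\theta-\zeta(1-\overline{\theta(\zeta)}\theta)$. You then project, using $P_{\clk_\theta}1=k_0^\theta$ and $P_{\clk_\theta}\theta=0$.

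For the second identity you take a different route. The paper first computes $S_\theta\wt{k}_\zeta^\theta=\zeta\wt{k}_\zeta^\theta-\theta(\zeta)k_0^\theta$. It then transports this through the conjugation $S_\theta^*=C_\theta S_\theta C_\theta$, using $C_\theta\wt{k}_\zeta^\theta=k_\zeta^\theta$ and $C_\theta k_0^\theta=\wt{k}_0^\theta$. You instead use that $S_\theta^*$ is the backward shift restricted to the $S^*$-invariant space $\clk_\theta$. You subtract from $k_\zeta^\theta-\overline{\zeta}zk_\zeta^\theta=1-\overline{\theta(\zeta)}\theta$ its value at $0$ and divide by $z$.

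Your route is more elementary. It needs only $k_\zeta^\theta\in\clk_\theta$ and the backward-shift description of $S_\theta^*$, not the conjugation $C_\theta$ or the boundary kernel $\wt{k}_\zeta^\theta$. It also never uses $|\zeta|=1$, so it gives $S_\theta^*k_\lambda^\theta=\overline{\lambda}k_\lambda^\theta-\overline{\theta(\lambda)}\wt{k}_0^\theta$ uniformly for $\lambda\in\D$ and for ADC points. This is the same formula the paper quotes from Sarason at $\lambda=r$ in Theorem~\ref{thm:steinfail}.

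The paper's route, in exchange, presents the second identity as the $C_\theta$-mirror image of the first. It also records the companion formula for $S_\theta\wt{k}_\zeta^\theta$ along the way.
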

	
	\begin{proof}
		On $\T$ we have $\overline{\zeta}=1/\zeta$, so $z=\zeta\bigl(1-(1-\overline{\zeta}z)\bigr)$.
		Multiplying by $k_\zeta^\theta$ and using
		$(1-\overline{\zeta}z)k_\zeta^\theta=1-\overline{\theta(\zeta)}\theta$ give
		\[
		z\,k_\zeta^\theta=\zeta\Bigl(k_\zeta^\theta-\bigl(1-\overline{\theta(\zeta)}\,\theta\bigr)\Bigr).
		\]
		Applying $P_{\clk_\theta}$ and using
		$P_{\clk_\theta}1=k_0^\theta$ and $P_{\clk_\theta}\theta=0$ yield the first identity. The same
		computation applied to $\wt{k}_\zeta^\theta$ gives
		$S_\theta\wt{k}_\zeta^\theta=\zeta\,\wt{k}_\zeta^\theta-\theta(\zeta)\,k_0^\theta$. Now apply
		$S_\theta^*=C_\theta S_\theta C_\theta$ together with
		$C_\theta\wt{k}_\zeta^\theta=k_\zeta^\theta$, $C_\theta k_0^\theta=\wt{k}_0^\theta$ and the
		antilinearity of $C_\theta$, to get
		\[
		S_\theta^* k_\zeta^\theta=C_\theta S_\theta\wt{k}_\zeta^\theta
		=C_\theta\bigl(\zeta\,\wt{k}_\zeta^\theta-\theta(\zeta)\,k_0^\theta\bigr)
		=\overline{\zeta}\,k_\zeta^\theta-\overline{\theta(\zeta)}\,\wt{k}_0^\theta.\qedhere
		\]
	\end{proof}
	
	\begin{proposition}\label{prop:TTO}
		Let $\zeta\in\T$ be a point at which $\theta$ has an ADC. Then
		$k_\zeta^\theta\otimes k_\zeta^\theta\in\clt_\theta$, with
		\[
		D_T\bigl(k_\zeta^\theta\otimes k_\zeta^\theta\bigr)
		=k_\zeta^\theta\otimes k_0^\theta+k_0^\theta\otimes\bigl(k_\zeta^\theta-k_0^\theta\bigr)\in\clr_T.
		\]
	\end{proposition}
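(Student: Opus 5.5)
The plan is to compute $D_T(k_\zeta^\theta\otimes k_\zeta^\theta)$ directly from Lemma~\ref{lem:bdry} and then invoke Sarason's characterization \eqref{eq:TTOchar}. By the tensor rule \eqref{eq:tensor},
\[
S_\theta\bigl(k_\zeta^\theta\otimes k_\zeta^\theta\bigr)S_\theta^*=(S_\theta k_\zeta^\theta)\otimes(S_\theta k_\zeta^\theta),
\]
since $(S_\theta^*)^*=S_\theta$. Lemma~\ref{lem:bdry} gives $S_\theta k_\zeta^\theta=\zeta(k_\zeta^\theta-k_0^\theta)$. Hence
\[
S_\theta\bigl(k_\zeta^\theta\otimes k_\zeta^\theta\bigr)S_\theta^*=\zeta\overline{\zeta}\,(k_\zeta^\theta-k_0^\theta)\otimes(k_\zeta^\theta-k_0^\theta)=(k_\zeta^\theta-k_0^\theta)\otimes(k_\zeta^\theta-k_0^\theta),
\]
using $|\zeta|=1$ and the conjugate-linearity in the second slot.

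Next I expand this bilinearly:
\[
k_\zeta^\theta\otimes k_\zeta^\theta-k_\zeta^\theta\otimes k_0^\theta-k_0^\theta\otimes k_\zeta^\theta+k_0^\theta\otimes k_0^\theta.
\]
Subtracting it from $k_\zeta^\theta\otimes k_\zeta^\theta$ yields
\[
D_T\bigl(k_\zeta^\theta\otimes k_\zeta^\theta\bigr)=k_\zeta^\theta\otimes k_0^\theta+k_0^\theta\otimes k_\zeta^\theta-k_0^\theta\otimes k_0^\theta=k_\zeta^\theta\otimes k_0^\theta+k_0^\theta\otimes\bigl(k_\zeta^\theta-k_0^\theta\bigr).
\]
This is exactly the displayed formula, and it is of the form $\psi\otimes k_0^\theta+k_0^\theta\otimes\chi$ with $\psi=k_\zeta^\theta$ and $\chi=k_\zeta^\theta-k_0^\theta$.

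To conclude via \eqref{eq:TTOchar}, I need two facts. First, $k_\zeta^\theta\otimes k_\zeta^\theta$ is a bounded operator on $\clk_\theta$. Second, $\psi,\chi\in\clk_\theta$. Both follow from the Ahern--Clark fact recalled at the start of the section: the ADC at $\zeta$ guarantees $k_\zeta^\theta\in\clk_\theta$ with finite norm, so the rank-one operator is bounded. Sarason's theorem then gives $k_\zeta^\theta\otimes k_\zeta^\theta\in\clt_\theta$.

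The only point needing care is the justification of Lemma~\ref{lem:bdry} at a boundary point. It relies on $k_\zeta^\theta\in\clk_\theta$ and on the identity $z=\zeta(1-(1-\overline{\zeta}z))$ holding a.e.\ on $\T$, and both are already available. So I expect no real obstacle; the main thing to watch is the conjugation convention when pulling the scalar $\zeta$ out of the second slot.
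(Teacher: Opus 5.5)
Your proposal is correct and matches the paper's proof essentially step for step. Both compute $S_\theta(k_\zeta^\theta\otimes k_\zeta^\theta)S_\theta^*=(S_\theta k_\zeta^\theta)\otimes(S_\theta k_\zeta^\theta)=(k_\zeta^\theta-k_0^\theta)\otimes(k_\zeta^\theta-k_0^\theta)$ via \eqref{eq:tensor}, Lemma~\ref{lem:bdry} and $|\zeta|=1$, subtract it from $k_\zeta^\theta\otimes k_\zeta^\theta$, and conclude with \eqref{eq:TTOchar}; your use of Ahern--Clark to place $k_\zeta^\theta$ in $\clk_\theta$ just makes explicit what the paper leaves implicit.
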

	
	\begin{proof}
		By Lemma~\ref{lem:bdry}, we obtain
		\[
		S_\theta\bigl(k_\zeta^\theta\otimes k_\zeta^\theta\bigr)S_\theta^*
		=\bigl(S_\theta k_\zeta^\theta\bigr)\otimes\bigl(S_\theta k_\zeta^\theta\bigr)
		=|\zeta|^{2}\bigl(k_\zeta^\theta-k_0^\theta\bigr)\otimes\bigl(k_\zeta^\theta-k_0^\theta\bigr)
		=\bigl(k_\zeta^\theta-k_0^\theta\bigr)\otimes\bigl(k_\zeta^\theta-k_0^\theta\bigr).
		\]
		Now subtracting from $k_\zeta^\theta\otimes k_\zeta^\theta$ gives
		\[
		D_T\bigl(k_\zeta^\theta\otimes k_\zeta^\theta\bigr)
		=k_\zeta^\theta\otimes k_0^\theta+k_0^\theta\otimes k_\zeta^\theta-k_0^\theta\otimes k_0^\theta
		=k_\zeta^\theta\otimes k_0^\theta+k_0^\theta\otimes\bigl(k_\zeta^\theta-k_0^\theta\bigr) \in \clr_T.
		\]
		Thus $k_\zeta^\theta\otimes k_\zeta^\theta\in\clt_\theta$.
	\end{proof}
	
	To decide when $k_\zeta^\theta\otimes k_\zeta^\theta$ is a truncated Hankel operator as well, we require a criterion for inclusion in the Gu--Ma defect space $\clr_H$.
	
	\begin{lemma}\label{lem:corner}
		Let $P$ be the orthogonal projection of $\clk_\theta$ onto $(\C k_0^\theta)^{\perp}$ and $Q$ the
		orthogonal projection onto $(\C\wt{k}_0^\theta)^{\perp}$. Then, for $X\in\clb(\clk_\theta)$,
		\[
		X\in\clr_H\iff QXP=0 .
		\]
		Consequently, for $w\in\clk_\theta$, $\ w\otimes w\in\clr_H$ if and only if
		$w\in\C k_0^\theta\cup\C\wt{k}_0^\theta$.
	\end{lemma}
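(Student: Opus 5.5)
The plan is to characterize $\clr_H$ through its action on the orthogonal decomposition $\clk_\theta=\C k_0^\theta\oplus(\C k_0^\theta)^\perp$ in the domain and $\clk_\theta=\C\wt{k}_0^\theta\oplus(\C\wt{k}_0^\theta)^\perp$ in the range.

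For the forward direction, let $X=x\otimes k_0^\theta+\wt{k}_0^\theta\otimes y$. Using \eqref{eq:tensor}, we have $(x\otimes k_0^\theta)P=x\otimes Pk_0^\theta=0$, since $P$ is self-adjoint and kills $k_0^\theta$. Likewise $Q(\wt{k}_0^\theta\otimes y)=(Q\wt{k}_0^\theta)\otimes y=0$. Hence $QXP=0$.

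For the converse, suppose $QXP=0$ and write $X=XP+X(I-P)$. Here $I-P=\norm{k_0^\theta}^{-2}k_0^\theta\otimes k_0^\theta$, which is legitimate because $\norm{k_0^\theta}^2=1-|\theta(0)|^2>0$. Therefore $X(I-P)=\norm{k_0^\theta}^{-2}(Xk_0^\theta)\otimes k_0^\theta$, which has the form $x\otimes k_0^\theta$. Next, $XP=QXP+(I-Q)XP=(I-Q)XP$. Since $I-Q=\norm{\wt{k}_0^\theta}^{-2}\wt{k}_0^\theta\otimes\wt{k}_0^\theta$, we get $(I-Q)XP=\wt{k}_0^\theta\otimes\bigl(\norm{\wt{k}_0^\theta}^{-2}PX^*\wt{k}_0^\theta\bigr)$, which has the form $\wt{k}_0^\theta\otimes y$. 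So $X\in\clr_H$. This part is routine bookkeeping with \eqref{eq:tensor}.

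For the consequence, take $X=w\otimes w$. Then $QXP=(Qw)\otimes(Pw)$, and a rank-one tensor vanishes if and only if one of its factors is zero. So $w\otimes w\in\clr_H$ iff $Qw=0$ or $Pw=0$. Now $Qw=0$ means $w\in\C\wt{k}_0^\theta$, and $Pw=0$ means $w\in\C k_0^\theta$. This gives the stated criterion.

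I expect no real obstacle here. The only points needing care are that both kernels are nonzero, so the projections $I-P$ and $I-Q$ are correctly normalized, and that the adjoint is placed in the correct slot when moving $P$ across the tensor.
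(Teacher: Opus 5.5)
Your proposal is correct and follows the paper's proof essentially step for step: the same forward computation, the same splitting $X=XP+X(I-P)$ with $X(I-P)=\norm{k_0^\theta}^{-2}(Xk_0^\theta)\otimes k_0^\theta$, and the same reduction of the consequence to $(Qw)\otimes(Pw)=0$. The only cosmetic difference is that you write $XP=(I-Q)XP$ and get the explicit vector $y=\norm{\wt{k}_0^\theta}^{-2}PX^{*}\wt{k}_0^\theta$. The paper instead observes that $\ran XP\subseteq\ker Q=\C\wt{k}_0^\theta$, which forces $XP=\wt{k}_0^\theta\otimes y$ for some $y$.
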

	
	\begin{proof}
		Since $\theta$ is non-constant, $\norm{k_0^\theta}^{2}=\norm{\wt{k}_0^\theta}^{2}=1-|\theta(0)|^{2}>0$,
		so $P,Q$ are projections with $\ker P=\C k_0^\theta$,
		$\ker Q=\C\wt{k}_0^\theta$ and $I-P=\norm{k_0^\theta}^{-2}k_0^\theta\otimes k_0^\theta$.
		
		Let $x, y \in \clk_\theta$ such that
		\[
		X=x\otimes k_0^\theta+\wt{k}_0^\theta\otimes y \in \clr_H,
		\]
		then
		$QXP=(Qx)\otimes(Pk_0^\theta)+(Q\wt{k}_0^\theta)\otimes(Py)=0$, since $Pk_0^\theta=0$ and
		$Q\wt{k}_0^\theta=0$.
		
		Conversely, suppose $QXP=0$. Then $XPf\in\ker Q=\C\wt{k}_0^\theta$ for every $f\in\clk_\theta$.
		Thus $XP$ has at most rank one with $\ran XP\subseteq \C\wt{k}_0^\theta$.
		Thus $XP=\wt{k}_0^\theta\otimes y$ for some $y\in\clk_\theta$. Also,
		\[X(I-P)=\norm{k_0^\theta}^{-2}(Xk_0^\theta)\otimes k_0^\theta=:x\otimes k_0^\theta.
		\]
		Adding the
		two yields $X\in\clr_H$. For the last statement, $Q(w\otimes w)P=(Qw)\otimes(Pw)$ vanishes if and only if $Qw=0$ or
		$Pw=0$, that is, $w\in\C\wt{k}_0^\theta$ or $w\in\C k_0^\theta$.
	\end{proof}
	
	\begin{lemma}\label{lem:position}
		Let $\theta$ be inner with $\dim\clk_\theta\ge2$ and let $\zeta\in\T$ be a point at which
		$\theta$ has an ADC. Then $k_\zeta^\theta\notin\C k_0^\theta$ and
		$k_\zeta^\theta\notin\C\wt{k}_0^\theta$.
	\end{lemma}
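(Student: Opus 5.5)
The plan is to argue by contradiction in each case, using the boundary identities of Lemma~\ref{lem:bdry} together with Lemma~\ref{lem:cyclic}. Note first that $k_\zeta^\theta\neq 0$, since $\norm{k_\zeta^\theta}^2=|\theta'(\zeta)|>0$; indeed an inner function that is non-constant satisfies $|\theta'(\zeta)|>0$ at any ADC point by Julia's lemma. Equivalently, $\ip{k_\zeta^\theta}{k_0^\theta}=k_\zeta^\theta(0)=1-\overline{\theta(\zeta)}\theta(0)\neq0$ because $|\theta(0)|<1$. This second form is the one I would use, since it avoids any appeal to Julia's lemma.

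For the first claim, suppose $k_\zeta^\theta=c\,k_0^\theta$. The nonvanishing of $\ip{k_\zeta^\theta}{k_0^\theta}$ forces $c\neq0$. Lemma~\ref{lem:bdry} then gives
\[
c\,S_\theta k_0^\theta=S_\theta k_\zeta^\theta=\zeta\,(c-1)\,k_0^\theta ,
\]
so $S_\theta k_0^\theta\in\C k_0^\theta$. This contradicts Lemma~\ref{lem:cyclic}, since $\dim\clk_\theta\ge2$.

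For the second claim, suppose $k_\zeta^\theta=c\,\wt{k}_0^\theta$, again with $c\neq0$. I would apply $S_\theta$ and use \eqref{eq:s4} on the right-hand side and Lemma~\ref{lem:bdry} on the left:
\[
\zeta\bigl(c\,\wt{k}_0^\theta-k_0^\theta\bigr)=S_\theta k_\zeta^\theta=c\,S_\theta\wt{k}_0^\theta=-c\,\theta(0)\,k_0^\theta .
\]
This yields $\zeta c\,\wt{k}_0^\theta=(\zeta-c\,\theta(0))\,k_0^\theta$. Since $\zeta c\neq0$, it follows that $\wt{k}_0^\theta\in\C k_0^\theta$, which contradicts the second part of Lemma~\ref{lem:cyclic}.

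An alternative route for the second claim is via the conjugation. Using $C_\theta k_\zeta^\theta=\wt{k}_\zeta^\theta=\overline{\zeta}\theta(\zeta)k_\zeta^\theta$ from \eqref{eq:bdryconj}, applying $C_\theta$ to $k_\zeta^\theta=c\wt{k}_0^\theta$ would put $k_\zeta^\theta$ in $\C k_0^\theta$, reducing to the first claim. Either route works, so there is no real obstacle. The only point requiring care is confirming that $c\neq0$, which is handled by the nonvanishing of $\ip{k_\zeta^\theta}{k_0^\theta}$.
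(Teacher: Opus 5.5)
Your proof is correct, and for the first claim it takes a genuinely different route from the paper.

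\textbf{First claim.} The paper obtains $k_\zeta^\theta\notin\C k_0^\theta$ directly from Corollary~\ref{cor:kindep}, i.e.\ linear independence of kernels at distinct points of $\D\cup\Omega$. When $\theta$ is not a finite Blaschke product, that corollary rests on Bessonov's result (Lemma~\ref{lem:bessonov}). You instead apply $S_\theta$ and use the boundary identity $S_\theta k_\zeta^\theta=\zeta(k_\zeta^\theta-k_0^\theta)$ of Lemma~\ref{lem:bdry}. This reduces the claim to $S_\theta k_0^\theta\notin\C k_0^\theta$ from Lemma~\ref{lem:cyclic}, which needs only the cyclicity of $k_0^\theta$.

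\textbf{Second claim.} The paper uses exactly your ``alternative route'': apply $C_\theta$ and \eqref{eq:bdryconj}, which lands you back in the first case. Your primary argument via \eqref{eq:s4} avoids the conjugation altogether. It reduces instead to the independence of $k_0^\theta$ and $\wt{k}_0^\theta$ in Lemma~\ref{lem:cyclic}.

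\textbf{Comparison.} Your version is self-contained: the lemma holds for every inner $\theta$ with $\dim\clk_\theta\ge2$ using only Sarason's identities, with no external independence theorem. You also supply the justification $\ip{k_\zeta^\theta}{k_0^\theta}=1-\overline{\theta(\zeta)}\theta(0)\neq0$ for $k_\zeta^\theta\neq0$, which the paper dismisses as clear. The paper's version is shorter, and since Corollary~\ref{cor:kindep} is needed elsewhere anyway (for instance the independence of $k_0^\theta,k_1^\theta,k_{-1}^\theta$ in Theorem~\ref{thm:main}), the appeal to Bessonov is not avoided globally.
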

	
	\begin{proof}
		Clearly, $k_\zeta^\theta\ne0$. Since $0$ and $\zeta$ are distinct points of $\D\cup\Omega$ and
		$\dim\clk_\theta\ge2$, Corollary~\ref{cor:kindep} shows that $k_0^\theta$ and $k_\zeta^\theta$ 		are linearly independent, so $k_\zeta^\theta\notin\C k_0^\theta$.
		
		Suppose next $k_\zeta^\theta=c\,\wt{k}_0^\theta$, necessarily with $c\ne0$. Applying $C_\theta$
		and using its antilinearity  gives $\wt{k}_\zeta^\theta=\overline{c}\,k_0^\theta$. On the
		other hand $\wt{k}_\zeta^\theta=\overline{\zeta}\theta(\zeta)k_\zeta^\theta$ by
		\eqref{eq:bdryconj}, with $\overline{\zeta}\theta(\zeta)$ unimodular. Hence
		$k_\zeta^\theta\in\C k_0^\theta$,  which is a contradiction. This completes the proof.
	\end{proof}
	
	\begin{proposition}\label{prop:Hdich}
		Let $\theta$ be a non-constant inner function and let $\zeta\in\T$ be a point at which $\theta$
		has an ADC. Then
		\begin{equation}\label{eq:Hform}
			D_H\bigl(k_\zeta^\theta\otimes k_\zeta^\theta\bigr)
			=\bigl(1-\overline{\zeta}^{\,2}\bigr)\,k_\zeta^\theta\otimes k_\zeta^\theta
			+\overline{\zeta}^{\,2}\,k_\zeta^\theta\otimes k_0^\theta
			+\overline{\zeta}\,\overline{\theta(\zeta)}\,\bigl(\wt{k}_0^\theta\otimes(k_\zeta^\theta-k_0^\theta)\bigr).
		\end{equation}
		Consequently:
		\begin{enumerate}
			\item if $\zeta\in\{1,-1\}$, then $k_\zeta^\theta\otimes k_\zeta^\theta\in\clh_\theta$;
			\item if $\dim\clk_\theta\ge2$, then
			$k_\zeta^\theta\otimes k_\zeta^\theta\in\clh_\theta$ if and only if $\zeta\in\{1,-1\}$.
		\end{enumerate}
	\end{proposition}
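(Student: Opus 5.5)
Compute $S_\theta^*(k_\zeta^\theta\otimes k_\zeta^\theta)S_\theta^*=(S_\theta^*k_\zeta^\theta)\otimes(S_\theta k_\zeta^\theta)$ using \eqref{eq:tensor}, since $(S_\theta^*)^*=S_\theta$. Lemma~\ref{lem:bdry} gives
\[
S_\theta^*k_\zeta^\theta=\overline{\zeta}k_\zeta^\theta-\overline{\theta(\zeta)}\wt{k}_0^\theta
\qquad\text{and}\qquad
S_\theta k_\zeta^\theta=\zeta(k_\zeta^\theta-k_0^\theta).
\]
The conjugate-linearity in the second slot turns the factor $\zeta$ into $\overline{\zeta}$. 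Expanding therefore yields
\[
\overline{\zeta}^{\,2}\,k_\zeta^\theta\otimes(k_\zeta^\theta-k_0^\theta)-\overline{\zeta}\,\overline{\theta(\zeta)}\,\wt{k}_0^\theta\otimes(k_\zeta^\theta-k_0^\theta).
\]
Subtracting this from $k_\zeta^\theta\otimes k_\zeta^\theta$ and regrouping gives exactly \eqref{eq:Hform}. This is a routine bookkeeping step.

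For (i), if $\zeta=\pm1$ then $\overline{\zeta}^{\,2}=1$, so the first term of \eqref{eq:Hform} vanishes. What remains is
\[
k_\zeta^\theta\otimes k_0^\theta+\wt{k}_0^\theta\otimes\bigl(\zeta\theta(\zeta)(k_\zeta^\theta-k_0^\theta)\bigr),
\]
where the coefficient has been moved inside the second slot with its conjugate. This lies in $\clr_H$, so the Gu--Ma criterion \eqref{eq:THOchar} gives $k_\zeta^\theta\otimes k_\zeta^\theta\in\clh_\theta$.

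For (ii), the "if" direction is (i). For the converse, assume $k_\zeta^\theta\otimes k_\zeta^\theta\in\clh_\theta$. Then $D_H$ of it lies in $\clr_H$. The last two terms of \eqref{eq:Hform} already lie in $\clr_H$, and $\clr_H$ is a linear subspace, so $(1-\overline{\zeta}^{\,2})\,k_\zeta^\theta\otimes k_\zeta^\theta\in\clr_H$. If $\zeta^2\neq1$, dividing by the nonzero scalar gives $k_\zeta^\theta\otimes k_\zeta^\theta\in\clr_H$. By Lemma~\ref{lem:corner} this forces $k_\zeta^\theta\in\C k_0^\theta\cup\C\wt{k}_0^\theta$. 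Lemma~\ref{lem:position} rules this out when $\dim\clk_\theta\ge2$. Hence $\zeta\in\{1,-1\}$.

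The only point needing care is the conjugation conventions in the expansion. There is no real obstacle, since Lemmas~\ref{lem:corner} and~\ref{lem:position} do the substantive work.
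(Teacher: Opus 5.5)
Your proposal is correct and matches the paper's proof. Both expand $(S_\theta^{*}k_\zeta^\theta)\otimes(S_\theta k_\zeta^\theta)$ via Lemma~\ref{lem:bdry}, reduce membership to $(1-\overline{\zeta}^{\,2})\,k_\zeta^\theta\otimes k_\zeta^\theta\in\clr_H$, and then invoke Lemmas~\ref{lem:corner} and~\ref{lem:position} for (ii); your conjugation bookkeeping, including rewriting $\overline{\zeta}\,\overline{\theta(\zeta)}\,\wt{k}_0^\theta\otimes w$ as $\wt{k}_0^\theta\otimes(\zeta\theta(\zeta)w)$, is also correct.
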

	
	\begin{proof}
		Suppose $w=k_\zeta^\theta-k_0^\theta$, so that $S_\theta k_\zeta^\theta=\zeta w$ by
		Lemma~\ref{lem:bdry}. Now,
		\[
		S_\theta^{*}\bigl(k_\zeta^\theta\otimes k_\zeta^\theta\bigr)S_\theta^{*}
		=\bigl(S_\theta^{*}k_\zeta^\theta\bigr)\otimes\bigl(S_\theta k_\zeta^\theta\bigr)
		=\Bigl(\overline{\zeta}\,k_\zeta^\theta-\overline{\theta(\zeta)}\,\wt{k}_0^\theta\Bigr)\otimes(\zeta w)
		=\overline{\zeta}^{\,2}\,k_\zeta^\theta\otimes w
		-\overline{\zeta}\,\overline{\theta(\zeta)}\,\wt{k}_0^\theta\otimes w .
		\]
		Since $k_\zeta^\theta\otimes w=k_\zeta^\theta\otimes k_\zeta^\theta-k_\zeta^\theta\otimes k_0^\theta$,
		subtracting from $k_\zeta^\theta\otimes k_\zeta^\theta$ yields \eqref{eq:Hform}.
		
		Since the last two terms of \eqref{eq:Hform} are in $\clr_H$ and $\clr_H$ is a linear subspace,
		\begin{equation}\label{eq:reduce}
			D_H\bigl(k_\zeta^\theta\otimes k_\zeta^\theta\bigr)\in\clr_H
			\iff
			\bigl(1-\overline{\zeta}^{\,2}\bigr)\,k_\zeta^\theta\otimes k_\zeta^\theta\in\clr_H .
		\end{equation}
		
		(i) If $\zeta=\pm1$ then $\overline{\zeta}^{\,2}=1$, so  \eqref{eq:reduce} holds trivially and hence $k_\zeta^\theta\otimes k_\zeta^\theta\in\clh_\theta$.
		
		(ii) If $\zeta\ne\pm1$ then $1-\overline{\zeta}^{\,2}\ne0$, and since $\clr_H$ is a linear
		subspace, \eqref{eq:reduce} reduces to $k_\zeta^\theta\otimes k_\zeta^\theta\in\clr_H$. By
		Lemma~\ref{lem:corner}, this would force $k_\zeta^\theta\in\C k_0^\theta\cup\C\wt{k}_0^\theta$,
		which contradicts Lemma~\ref{lem:position}. Hence
		$k_\zeta^\theta\otimes k_\zeta^\theta\notin\clh_\theta$.
	\end{proof}
	
	For the rest of the section, for $\zeta \in \T$ a point at which $\theta$ has an ADC, we denote $P_\zeta=k_\zeta^\theta \otimes k_\zeta^\theta$. Combining Proposition \ref{prop:TTO} and Proposition \ref{prop:Hdich}, we get the following result.
	
	\begin{proposition}\label{prop:gens}
		Let $\theta$ be a non-constant inner function with $\dim\clk_\theta\ge2$ having an ADC at $z=1$
		and $z=-1$. Then $P_1$ and $P_{-1}$ are linearly independent elements of $\cli_\theta$; in
		particular $\cli_\theta\supseteq\spn\{P_1,P_{-1}\}$ and $\dim\cli_\theta\ge2$.
		If $\theta$ has an
		ADC at exactly one of $\pm1$, say $\zeta_0$, then $P_{\zeta_0}\in\cli_\theta$ and $P_{\zeta_0}\ne0$.
	\end{proposition}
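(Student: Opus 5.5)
The plan is to combine Propositions~\ref{prop:TTO} and~\ref{prop:Hdich} for membership, and Corollary~\ref{cor:kindep} for linear independence.

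\emph{Membership.} Let $\zeta\in\{1,-1\}$ be a point at which $\theta$ has an ADC. By Proposition~\ref{prop:TTO}, $D_T(P_\zeta)\in\clr_T$, so Sarason's criterion \eqref{eq:TTOchar} gives $P_\zeta\in\clt_\theta$. By Proposition~\ref{prop:Hdich}(i), $P_\zeta\in\clh_\theta$. Hence $P_\zeta\in\cli_\theta$. This applies to both $P_1$ and $P_{-1}$ in the first statement, and to $P_{\zeta_0}$ in the second. Since $\cli_\theta$ is an intersection of two linear subspaces, it is itself a linear subspace, so $\spn\{P_1,P_{-1}\}\subseteq\cli_\theta$.

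\emph{Nonvanishing.} We have $\norm{P_\zeta}=\norm{k_\zeta^\theta}^2=|\theta'(\zeta)|$ by the Ahern--Clark remark. We also have $k_\zeta^\theta\neq0$, because evaluating at $z=0$ gives $k_\zeta^\theta(0)=1-\overline{\theta(\zeta)}\theta(0)\neq0$, since $|\theta(0)|<1=|\theta(\zeta)|$. Therefore $P_{\zeta_0}\neq0$.

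\emph{Linear independence of $P_1,P_{-1}$.} Suppose $aP_1+bP_{-1}=0$. Since $\dim\clk_\theta\ge2$, Corollary~\ref{cor:kindep} shows that $k_1^\theta$ and $k_{-1}^\theta$ are linearly independent. Choose $f\in\clk_\theta$ with $\ip{f}{k_1^\theta}=1$ and $\ip{f}{k_{-1}^\theta}=0$. This is possible because the Gram matrix of two independent vectors is invertible, or equivalently one can take $f$ to be a suitable element of the span of the two kernels. Applying the operator to $f$ gives
\[
0=(aP_1+bP_{-1})f=a\,k_1^\theta .
\]
Since $k_1^\theta\neq0$, this forces $a=0$. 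The symmetric choice of $f$ gives $b=0$. Thus $\dim\cli_\theta\ge2$.

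The only step that needs any care is the linear independence, and it reduces immediately to Corollary~\ref{cor:kindep}. Everything else is a direct citation of earlier results.
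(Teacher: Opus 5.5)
Your proof is correct and follows the paper's argument essentially step for step: membership comes from Propositions~\ref{prop:TTO} and~\ref{prop:Hdich}(i), nonvanishing from $\norm{k_\zeta^\theta}^2=|\theta'(\zeta)|>0$, and independence from testing $aP_1+bP_{-1}=0$ on a vector $f$ with $\ip{f}{k_1^\theta}=1$ and $\ip{f}{k_{-1}^\theta}=0$, which Corollary~\ref{cor:kindep} supplies. Your extra observation that $k_\zeta^\theta(0)=1-\overline{\theta(\zeta)}\theta(0)\neq0$ is a valid alternative way to see that $P_\zeta\neq0$.
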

	
	\begin{proof}
		Proposition~\ref{prop:TTO} and Proposition~\ref{prop:Hdich}(i)  yield $P_\zeta\in\clt_\theta \cap \clh_\theta=\cli_\theta$ for $\zeta=\pm1$
		whenever the
		corresponding ADCs hold and $P_\zeta\ne0$ because
		$\norm{k_\zeta^\theta}^{2}=|\theta'(\zeta)|>0$. When both ADCs hold, by Corollary~\ref{cor:kindep},
		$k_1^\theta,k_{-1}^\theta$ are linearly independent, so there is $h\in\clk_\theta$
		with $\ip{h}{k_{-1}^\theta}=0$ and $\ip{h}{k_1^\theta}=1$. Now assume that there exist $\alpha,\beta \in \C$ such that
		$\alpha P_1+\beta P_{-1}=0$. Then
		\[
		0=(\alpha P_1+\beta P_{-1})h=
		\alpha \left\langle h,k_{1}^\theta\right\rangle k_1^\theta +\beta \left\langle h, k_{-1}^\theta \right\rangle k_{-1}^\theta= \alpha k_1^\theta.
		\]
		Thus $\alpha=0$. Interchanging
		the roles of $1$ and $-1$ gives $\beta=0$. This finishes the proof.
	\end{proof}
	
	Now to prove that the inclusion in Proposition \ref{prop:gens} is, in fact, equality, we need some necessary framework. Write
	\[
	V_1=\bigl\{h \in \clk_\theta: (S_\theta^*-S_\theta)h\in \spn\{k_0^\theta,\wt{k}_0^\theta\}\bigr\},
	\qquad
	V_2=\bigl\{h \in \clk_\theta: (S_\theta^2-I)h\in \spn\{k_0^\theta,S_\theta{k_0^\theta}\}\bigr\}.
	\]
	The role of $V_1$ is that the range of every $C\in\cli_\theta$ will be shown to lie inside it; the
	role of $V_2$ is that it is accessible to an injective operator.
	
	\begin{lemma}\label{lem:V1V2}
		For every non-constant inner function $\theta$, one has $V_1=V_2$.
	\end{lemma}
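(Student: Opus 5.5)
The plan is to relate the two conditions by multiplying by $S_\theta$. The key identity is $S_\theta(S_\theta^*-S_\theta)=S_\theta S_\theta^*-S_\theta^2$. By \eqref{eq:s1} we have $S_\theta S_\theta^*=I-k_0^\theta\otimes k_0^\theta$. Hence
\[
S_\theta(S_\theta^*-S_\theta)h=-(S_\theta^2-I)h-\ip{h}{k_0^\theta}\,k_0^\theta .
\]
In the same way, $S_\theta$ maps $\spn\{k_0^\theta,\wt{k}_0^\theta\}$ into $\spn\{k_0^\theta,S_\theta k_0^\theta\}$, because $S_\theta\wt{k}_0^\theta=-\theta(0)k_0^\theta$ by \eqref{eq:s4}.

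\emph{Inclusion $V_1\subseteq V_2$.} Take $h\in V_1$, so that $(S_\theta^*-S_\theta)h=a k_0^\theta+b\wt{k}_0^\theta$. Applying $S_\theta$ gives
\[
(S_\theta^2-I)h=-\ip{h}{k_0^\theta}k_0^\theta-aS_\theta k_0^\theta+b\theta(0)k_0^\theta,
\]
and the right-hand side lies in $\spn\{k_0^\theta,S_\theta k_0^\theta\}$. So $h\in V_2$.

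\emph{Inclusion $V_2\subseteq V_1$.} Take $h\in V_2$, so that $(S_\theta^2-I)h=\alpha k_0^\theta+\beta S_\theta k_0^\theta$. The displayed identity then gives
\[
S_\theta\bigl[(S_\theta^*-S_\theta)h+\beta k_0^\theta\bigr]\in\C k_0^\theta .
\]
Put $g=(S_\theta^*-S_\theta)h+\beta k_0^\theta$, so $S_\theta g=ck_0^\theta$. It remains to show that $g\in\spn\{k_0^\theta,\wt{k}_0^\theta\}$, and this is the main step. I would use \eqref{eq:s2}: $S_\theta^*S_\theta=I-\wt{k}_0^\theta\otimes\wt{k}_0^\theta$. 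Therefore
\[
g=S_\theta^*S_\theta g+\ip{g}{\wt{k}_0^\theta}\wt{k}_0^\theta=cS_\theta^*k_0^\theta+\ip{g}{\wt{k}_0^\theta}\wt{k}_0^\theta .
\]
By \eqref{eq:s3}, $S_\theta^*k_0^\theta=-\overline{\theta(0)}\wt{k}_0^\theta$, so $g\in\C\wt{k}_0^\theta$. Consequently $(S_\theta^*-S_\theta)h=g-\beta k_0^\theta\in\spn\{k_0^\theta,\wt{k}_0^\theta\}$, that is, $h\in V_1$.

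The whole argument rests on the fact that $\ker S_\theta\subseteq\C\wt{k}_0^\theta$, and more precisely that $S_\theta^{-1}(\C k_0^\theta)\subseteq\C\wt{k}_0^\theta$. Both follow from \eqref{eq:s2} and \eqref{eq:s3}, so no invertibility of $S_\theta$ is needed. This matters because $\theta(0)$ may vanish, in which case $S_\theta$ is not invertible by \eqref{eq:invertible}. I expect this to be the only delicate point; everything else is bookkeeping with \eqref{eq:s1}--\eqref{eq:s4}.
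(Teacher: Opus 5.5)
Your proof is correct and follows essentially the same route as the paper: $V_1\subseteq V_2$ comes from applying $S_\theta$ and using \eqref{eq:s1} and \eqref{eq:s4}. For the reverse inclusion, $S_\theta$ is undone via \eqref{eq:s2} and \eqref{eq:s3}, so no invertibility of $S_\theta$ is needed. The only difference is bookkeeping: the paper applies $S_\theta^*$ directly to $(S_\theta^2-I)h$, whereas you first isolate $g$ with $S_\theta g\in\C k_0^\theta$ and then show $g\in\C\wt{k}_0^\theta$.
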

	
	\begin{proof}
		Let $h\in V_1$, say $(S_\theta^{*}-S_\theta)h=a\,k_0^\theta+b\,\wt{k}_0^\theta$. Applying $S_\theta$ and using \eqref{eq:s1} and \eqref{eq:s4}  give
		\[
		\bigl(I-k_0^\theta\otimes k_0^\theta-S_\theta^{2}\bigr)h
		=a\,S_\theta k_0^\theta-b\,\theta(0)\,k_0^\theta,
		\]
		which implies
		\begin{equation}\label{eq:forward}
			(I-S_\theta^{2})h=a\,S_\theta k_0^\theta+\bigl(\ip{h}{k_0^\theta}-b\,\theta(0)\bigr)k_0^\theta
			\in\spn\{k_0^\theta,S_\theta k_0^\theta\},
		\end{equation}
		so $h\in V_2$.
		
		Conversely let $h\in V_2$, say $(S_\theta^{2}-I)h=c\,k_0^\theta+d\,S_\theta k_0^\theta$. Applying
		$S_\theta^{*}$ and using \eqref{eq:s2}, \eqref{eq:s3} yield
		\[
		\bigl(I-\wt{k}_0^\theta\otimes\wt{k}_0^\theta\bigr)S_\theta h-S_\theta^{*}h =-c\,\overline{\theta(0)}\,\wt{k}_0^\theta
		+d\bigl(k_0^\theta-\ip{k_0^\theta}{\wt{k}_0^\theta}\wt{k}_0^\theta\bigr).
		\]
		Equating and solving for $\bigl(S_\theta^{*}-S_\theta\bigr)h$,
		\begin{equation}\label{eq:backward}
			\bigl(S_\theta^{*}-S_\theta\bigr)h
			=-d\,k_0^\theta
			+\Bigl(c\,\overline{\theta(0)}+d\,\ip{k_0^\theta}{\wt{k}_0^\theta}
			-\ip{S_\theta h}{\wt{k}_0^\theta}\Bigr)\wt{k}_0^\theta
			\in\spn\{k_0^\theta,\wt{k}_0^\theta\},
		\end{equation}
		so $h\in V_1$.
	\end{proof}
	
	\begin{lemma}\label{lem:V1}
		Let $\theta$ be an inner function with $\dim \clk_\theta\ge 2$ (possibly infinite) having an ADC
		at $1$ and at $-1$. Put $L=I-S_\theta^{2}$. Then
		\begin{enumerate}
			\item[(i)] $L$ is injective;
			\item[(ii)] $L\,k_{1}^\theta=k_0^\theta+S_\theta k_0^\theta$ and
			$L\,k_{-1}^\theta=k_0^\theta-S_\theta k_0^\theta$;
			\item[(iii)] $V_1=V_2=\spn\{k_1^\theta,k_{-1}^\theta\}$, so that $\dim V_1=2$.
		\end{enumerate}
	\end{lemma}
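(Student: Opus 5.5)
The plan is to prove (ii) first, because it is a direct computation from Lemma~\ref{lem:bdry}, and then to derive (i) and (iii) from it.

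\emph{Step (ii).} Apply Lemma~\ref{lem:bdry} twice with $\zeta=\pm1$. For $\zeta\in\{1,-1\}$ we have $S_\theta k_\zeta^\theta=\zeta(k_\zeta^\theta-k_0^\theta)$, and therefore
\[
S_\theta^2k_\zeta^\theta=\zeta^2\bigl(k_\zeta^\theta-k_0^\theta\bigr)-\zeta S_\theta k_0^\theta=k_\zeta^\theta-k_0^\theta-\zeta S_\theta k_0^\theta .
\]
Subtracting this from $k_\zeta^\theta$ gives $Lk_\zeta^\theta=k_0^\theta+\zeta S_\theta k_0^\theta$, which is (ii).

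\emph{Step (i).} Suppose $Lh=0$, so $S_\theta^2h=h$ and hence $S_\theta^{2m}h=h$ for every $m\ge 1$. Since $S_\theta$ is of class $C_{00}$, we have $S_\theta^{2m}h\to0$, so $h=0$ and $L$ is injective.

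\emph{Step (iii).} Lemma~\ref{lem:V1V2} already gives $V_1=V_2$, so it suffices to compute $V_2$.
\begin{itemize}
\item By definition, $h\in V_2$ if and only if $Lh\in W:=\spn\{k_0^\theta,S_\theta k_0^\theta\}$.
\item By (ii), $L$ maps $k_1^\theta$ and $k_{-1}^\theta$ to $k_0^\theta\pm S_\theta k_0^\theta$.
\item By Lemma~\ref{lem:cyclic}, $k_0^\theta$ and $S_\theta k_0^\theta$ are linearly independent, so these two images span all of $W$.
\end{itemize}
Now take $h\in V_2$. Then $Lh=L(\alpha k_1^\theta+\beta k_{-1}^\theta)$ for some scalars $\alpha,\beta$, and injectivity (i) forces $h=\alpha k_1^\theta+\beta k_{-1}^\theta$. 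This gives $V_2\subseteq\spn\{k_1^\theta,k_{-1}^\theta\}$. The reverse inclusion also follows from (ii), since $L$ maps each $k_{\pm1}^\theta$ into $W$. Finally, $\dim V_1=2$ because $k_1^\theta$ and $k_{-1}^\theta$ are linearly independent by Corollary~\ref{cor:kindep}. This independence also follows from the independence of their images under $L$.

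The only delicate point is the injectivity in (i). In the infinite-dimensional case, $1$ and $-1$ may lie in $\T\cap\sigma(S_\theta)$, so a spectral argument is not available. The $C_{00}$ property avoids this issue entirely, so no real obstacle remains.
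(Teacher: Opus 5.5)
Your proposal is correct and follows essentially the same route as the paper: you use the $C_{00}$ argument for injectivity and compute $Lk_{\pm1}^\theta$ from Lemma~\ref{lem:bdry}. You then observe that $\{Lk_1^\theta, Lk_{-1}^\theta\}$ is a basis of $W$, so injectivity pulls $V_2$ back to $\spn\{k_1^\theta,k_{-1}^\theta\}$, and you get $V_1=V_2$ from Lemma~\ref{lem:V1V2}. Your remark that the independence of $k_{\pm1}^\theta$ already follows from that of their images under $L$ is a nice touch, since it makes Corollary~\ref{cor:kindep} (and hence Bessonov's result) unnecessary at this step.
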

	
	\begin{proof}
		$(i)$ If $Lh=0$ then $S_\theta^{2}h=h$, hence $S_\theta^{2m}h=h$ for every $m\ge1$; since $S_\theta$
		is of class $C_{00}$, $S_\theta^{2m}h\to0$, so $\norm{h}=\lim\limits_{m\to \infty}\norm{S_\theta^{2m}h}=0$. Therefore $L$ is injective.
		
		$(ii)$ At $\zeta=1$, $S_\theta k_1^\theta=k_1^\theta-k_0^\theta$, hence	$S_\theta^{2}k_1^\theta=(k_1^\theta-k_0^\theta)-S_\theta k_0^\theta$ and
		$L\,k_1^\theta=k_0^\theta+S_\theta k_0^\theta$.
		
		At $\zeta=-1$,	$S_\theta k_{-1}^\theta=k_0^\theta-k_{-1}^\theta$, hence
		$S_\theta^{2}k_{-1}^\theta=S_\theta k_0^\theta-k_0^\theta+k_{-1}^\theta$ and
		$L\,k_{-1}^\theta=k_0^\theta-S_\theta k_0^\theta$.
		
		$(iii)$ By Lemma~\ref{lem:cyclic}, $\{k_0^\theta,S_\theta k_0^\theta\}$ is linearly
		independent; write
		\[
		W=\spn\{k_0^\theta,S_\theta k_0^\theta\},
		\]
		a two-dimensional space.
		By $(ii)$,	$\{Lk_1^\theta,\,Lk_{-1}^\theta\}$ is another basis of $W$; in particular $W\subseteq\ran L$.
		
		Now $V_2=\{h:Lh\in W\}$. For $h\in V_2$, we can write
		\[
		Lh=a\,Lk_1^\theta+b\,Lk_{-1}^\theta \implies
		L(h-a k_1^\theta-b k_{-1}^\theta)=0,
		\]
		and  (i) gives
		$h=a k_1^\theta+b k_{-1}^\theta$. Hence $V_2=\spn\{k_1^\theta,k_{-1}^\theta\}$; the two kernels are independent by Corollary~\ref{cor:kindep}, so
		$\dim V_2=2$. Finally $V_1=V_2$ by Lemma~\ref{lem:V1V2}.
	\end{proof}
	
	\begin{lemma}\label{lem:defect}
		Let $\theta$ be an inner function with $\dim\clk_\theta\ge3$ and let $C\in\cli_\theta$.
		We may write
		\begin{equation}\label{eq:twodefects}
			D_T(C)=\psi\otimes k_0^\theta+k_0^\theta\otimes\chi,
			\qquad
			D_H(C)=f\otimes k_0^\theta+\wt{k}_0^\theta\otimes g
		\end{equation}
		with $\psi,\chi,f,g\in \clk_\theta$. Then
		\[
		\psi-f\in\spn\{k_0^\theta,\wt{k}_0^\theta\}
		\qquad\text{and}\qquad
		S_\theta^*\psi-S_\theta f\in\spn\{k_0^\theta,\wt{k}_0^\theta\}.
		\]
	\end{lemma}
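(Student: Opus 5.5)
The plan is to compute the second-order displacement $D_HD_T(C)$ in two ways, once as $D_H$ applied to the Toeplitz defect and once as $D_T$ applied to the Hankel defect, and compare them using the commutator formula \eqref{eq:comm} of Lemma~\ref{lem:BMdisp}. The terms that are not of the form $k_0^\theta\otimes(\cdot)$ or $\wt{k}_0^\theta\otimes(\cdot)$ will then have to cancel modulo $\spn\{k_0^\theta,\wt{k}_0^\theta\}$ in the first slot. That cancellation is exactly the claimed statement.

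\textbf{Step 1: expand both second-order displacements.} Using \eqref{eq:tensor} and \eqref{eq:s3},
\[
D_HD_T(C)=D_T(C)-S_\theta^*D_T(C)S_\theta^*
=\psi\otimes k_0^\theta+k_0^\theta\otimes\chi-(S_\theta^*\psi)\otimes(S_\theta k_0^\theta)+\overline{\theta(0)}\,\wt{k}_0^\theta\otimes(S_\theta\chi).
\]
Similarly, using \eqref{eq:s4},
\[
D_TD_H(C)=D_H(C)-S_\theta D_H(C)S_\theta^*
=f\otimes k_0^\theta+\wt{k}_0^\theta\otimes g-(S_\theta f)\otimes(S_\theta k_0^\theta)+\theta(0)\,k_0^\theta\otimes(S_\theta g).
\]

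\textbf{Step 2: subtract and use the commutator formula.} By \eqref{eq:comm}, $D_HD_T(C)-D_TD_H(C)$ equals $k_0^\theta\otimes(S_\theta^2C^*k_0^\theta)-\wt{k}_0^\theta\otimes(S_\theta^2C^*\wt{k}_0^\theta)$. Moving every term whose first slot is $k_0^\theta$ or $\wt{k}_0^\theta$ to one side gives
\[
(\psi-f)\otimes k_0^\theta-(S_\theta^*\psi-S_\theta f)\otimes(S_\theta k_0^\theta)=k_0^\theta\otimes a+\wt{k}_0^\theta\otimes b
\]
for some $a,b\in\clk_\theta$. The exact form of $a$ and $b$ is irrelevant.

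\textbf{Step 3: project and separate.} Let $\Pi$ be the orthogonal projection onto $\spn\{k_0^\theta,\wt{k}_0^\theta\}^\perp$ and multiply the identity on the left by $\Pi$. The right side vanishes, leaving
\[
\Pi(\psi-f)\otimes k_0^\theta=\Pi(S_\theta^*\psi-S_\theta f)\otimes S_\theta k_0^\theta.
\]
By Lemma~\ref{lem:cyclic}, $k_0^\theta$ and $S_\theta k_0^\theta$ are linearly independent. So there is $h$ with $\ip{h}{k_0^\theta}=1$ and $\ip{h}{S_\theta k_0^\theta}=0$, and applying both sides to $h$ gives $\Pi(\psi-f)=0$. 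Swapping the roles, with $\ip{h}{k_0^\theta}=0$ and $\ip{h}{S_\theta k_0^\theta}=1$, gives $\Pi(S_\theta^*\psi-S_\theta f)=0$. These two equalities are precisely the two memberships in $\spn\{k_0^\theta,\wt{k}_0^\theta\}$.

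\textbf{Main obstacle.} The only delicate point is bookkeeping in Step 1. One must track the conjugate-linearity in the second slot, so that $S_\theta^*(x\otimes y)S_\theta^*=(S_\theta^*x)\otimes(S_\theta y)$, and apply \eqref{eq:s3}--\eqref{eq:s4} to the correct factors. One must also fix the sign convention $[D_H,D_T]=D_HD_T-D_TD_H$. Neither sign nor coefficients affect the final conclusion, since those terms are annihilated by $\Pi$. The hypothesis $\dim\clk_\theta\ge3$ is not needed beyond $\dim\clk_\theta\ge2$, which Lemma~\ref{lem:cyclic} requires; it presumably matters later, when these relations are combined with Lemma~\ref{lem:V1}.
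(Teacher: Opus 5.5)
Your proof is correct and follows the paper's argument. Both compute $D_HD_T(C)$ once directly and once as $D_TD_H(C)+[D_H,D_T](C)$, and then separate the first slots using the linear independence of $k_0^\theta$ and $S_\theta k_0^\theta$. The only difference is minor. You remove all $k_0^\theta\otimes(\cdot)$ and $\wt{k}_0^\theta\otimes(\cdot)$ terms at once by left-multiplying with the projection onto $\spn\{k_0^\theta,\wt{k}_0^\theta\}^\perp$. The paper instead first tests against vectors orthogonal to $\spn\{k_0^\theta,S_\theta k_0^\theta\}$ to place the second-slot vectors in that span, and only then separates. Your remark that $\dim\clk_\theta\ge2$ already suffices for this lemma is also correct.
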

	
	\begin{proof}
		We expand $D_HD_T(C)$ in two ways. First, applying $D_H$ to $D_T(C)$ and using
		relations  \eqref{eq:disp} and 	\eqref{eq:s3}, we obtain
		\begin{equation}\label{eq:way1}
			D_H D_T(C)=\psi\otimes k_0^\theta-\bigl(S_\theta^{*}\psi\bigr)\otimes\bigl(S_\theta k_0^\theta\bigr)
			+k_0^\theta\otimes\chi
			+\overline{\theta(0)}\,\bigl(\wt{k}_0^\theta\otimes S_\theta\chi\bigr).
		\end{equation}
		
		On the other hand, we can write $D_HD_T=D_TD_H+[D_H,D_T]$, applying $D_T$ to $D_H(C)$, using
		\eqref{eq:s4} and  \eqref{eq:comm},
		\begin{equation}\label{eq:way2}
			D_H D_T(C)=f\otimes k_0^\theta-\bigl(S_\theta f\bigr)\otimes\bigl(S_\theta k_0^\theta\bigr)
			+\wt{k}_0^\theta\otimes g+\theta(0)\,\bigl(k_0^\theta\otimes S_\theta g\bigr)
			+k_0^\theta\otimes\bigl(S_\theta^{2}C^{*}k_0^\theta\bigr)
			-\wt{k}_0^\theta\otimes\bigl(S_\theta^{2}C^{*}\wt{k}_0^\theta\bigr).
		\end{equation}
		
		Equating \eqref{eq:way1} and \eqref{eq:way2}, we get
		\begin{equation}\label{eq:star}
			(\psi-f)\otimes k_0^\theta
			-\bigl(S_\theta^{*}\psi-S_\theta f\bigr)\otimes\bigl(S_\theta k_0^\theta\bigr)
			+k_0^\theta\otimes A+\wt{k}_0^\theta\otimes B=0,
		\end{equation}
		where $A=\chi-\overline{\theta(0)}S_\theta g-S_\theta^{2}C^{*}k_0^\theta$ and
		$B=\theta(0)S_\theta\chi-g+S_\theta^{2}C^{*}\wt{k}_0^\theta$.
		
		Applying \eqref{eq:star} to $h\in\clk_\theta$ gives
		\[
		\ip{h}{k_0^\theta}(\psi-f)
		-\ip{h}{S_\theta k_0^\theta}\bigl(S_\theta^{*}\psi-S_\theta f\bigr)
		+\ip{h}{A}\,k_0^\theta+\ip{h}{B}\,\wt{k}_0^\theta=0 .
		\]
		Since $\dim\clk_\theta\ge3$ while $\spn\{k_0^\theta,S_\theta k_0^\theta\}$ is two-dimensional
		(Lemma~\ref{lem:cyclic}), we may take $0 \neq h \in \spn\{k_0^\theta,S_\theta k_0^\theta\}^\perp$; then the first two terms
		vanish. Therefore, $\ip{h}{A}k_0^\theta+\ip{h}{B}\wt{k}_0^\theta=0$ which, in turn, implies
		$\ip{h}{A}=\ip{h}{B}=0$ because $k_0^\theta,\wt{k}_0^\theta$ are linearly independent. As this holds for every such $h$, we conclude $A,B\in\spn\{k_0^\theta,S_\theta k_0^\theta\}$.
		
		Write
		$A=a_{1}k_0^\theta+a_{2}S_\theta k_0^\theta$ and $B=b_{1}k_0^\theta+b_{2}S_\theta k_0^\theta$. Then
		\[
		k_0^\theta\otimes A=\overline{a_{1}}\bigl(k_0^\theta\otimes k_0^\theta\bigr)
		+\overline{a_{2}}\bigl(k_0^\theta\otimes S_\theta k_0^\theta\bigr),
		\]
		and similarly for
		$\wt{k}_0^\theta\otimes B$. Substituting into \eqref{eq:star}, we obtain
		\[
		\Bigl[(\psi-f)+\overline{a_{1}}k_0^\theta+\overline{b_{1}}\wt{k}_0^\theta\Bigr]\otimes k_0^\theta
		+\Bigl[-\bigl(S_\theta^{*}\psi-S_\theta f\bigr)+\overline{a_{2}}k_0^\theta
		+\overline{b_{2}}\wt{k}_0^\theta\Bigr]\otimes\bigl(S_\theta k_0^\theta\bigr)=0 .
		\]
		If $p\otimes q_{1}+r\otimes q_{2}=0$ with $q_1,q_2$ linearly independent then $p=r=0$. Indeed, we can choose nonzero	$h$ with $\ip{h}{q_1}=1$, $\ip{h}{q_2}=0$ and apply the relation to $h$, then symmetrically.
		Since $k_0^\theta$ and $S_\theta k_0^\theta$ are linearly independent, therefore  $(\psi-f)+\overline{a_{1}}k_0^\theta+\overline{b_{1}}\wt{k}_0^\theta=0$ and $-\bigl(S_\theta^{*}\psi-S_\theta f\bigr)+\overline{a_{2}}k_0^\theta
		+\overline{b_{2}}\wt{k}_0^\theta =0 $,
		which is our assertion.
	\end{proof}
	
	We are just one step away from proving our main result, Theorem \ref{thm:main}. Before that, we define the \emph{Crofoot transform} \cite{Crofoot}, which will be helpful for our proof. One can refer to \cite[\S13]{Sarason} for the following.
	
	\subsection{The Crofoot transform}
	
	\begin{remark}\label{rem:crofoot}
		For $w \in \D$ put $\theta_w=\dfrac{\theta-w}{1-\overline{w}\theta}$, which is again inner, and
		define the \emph{Crofoot transform} $J_w: \clk_\theta \rightarrow \clk_{\theta_w}$ by
		\[
		J_w f=\frac{\sqrt{1-|w|^{2}}}{1-\overline{w}\theta}\,f .
		\]
		Then $J_w$ is a unitary operator from $\clk_\theta$ onto $\clk_{\theta_w}$.
		Moreover, the transform preserves both classes,
		\[
		J_w \clt_\theta J_w^{-1}=\clt_{\theta_w}
		\qquad\text{and}\qquad
		J_w \clh_\theta J_w^{-1}=\clh_{\theta_w},
		\]
		the first statement being due to Sarason \cite[\S13]{Sarason} and the second to Gu and Ma
		\cite[Theorem 9.3]{GuMa}. Since conjugation by a bijection preserves intersections, it follows
		that $J_w \cli_{\theta}J_w^{-1}=\cli_{\theta_w}$.
	\end{remark}
	
	\begin{lemma}\label{lem:transfer}
		Let $\theta$ be inner and $w\in\D$. Then
		\begin{enumerate}
			\item $\dim\clk_{\theta_w}=\dim\clk_\theta$;
			\item for $\zeta\in\T$, $\theta$ has an ADC at $\zeta$ if and only if $\theta_w$ does;
			\item $\theta_w(0)=\bigl(\theta(0)-w\bigr)\bigl(1-\overline{w}\theta(0)\bigr)^{-1}$.
			In particular, if $\theta(0)=0$ and $w\ne0$, then $\theta_w(0)=-w\ne0$.
		\end{enumerate}
	\end{lemma}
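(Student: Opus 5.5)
For (i), I will use the Crofoot transform $J_w$ of Remark~\ref{rem:crofoot}. It is a unitary map of $\clk_\theta$ onto $\clk_{\theta_w}$, so the two spaces have equal Hilbert dimension, whether finite or infinite.

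For (ii), I will write $\theta_w=\phi_w\circ\theta$ with $\phi_w(\lambda)=(\lambda-w)/(1-\overline{w}\lambda)$. This is a disc automorphism, analytic on a neighbourhood of $\overline{\D}$. The quickest route is condition (i) of Lemma~\ref{lem:Julia}, using the identity
\[
1-|\phi_w(\lambda)|^2=\frac{(1-|w|^2)(1-|\lambda|^2)}{|1-\overline{w}\lambda|^2}.
\]
Because $|1-\overline{w}\lambda|$ lies between $1-|w|$ and $2$, the quantities $1-|\theta_w(z)|$ and $1-|\theta(z)|$ are comparable with constants depending only on $w$. Here I use $1-|\lambda|^2\asymp 1-|\lambda|$ for $|\lambda|\le 1$. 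Hence the $\liminf$ in Lemma~\ref{lem:Julia}(i) is finite for $\theta$ exactly when it is finite for $\theta_w$. Since $\phi_w^{-1}=\phi_{-w}$ is again of the same form, the argument is symmetric. This gives the ADC equivalence at every $\zeta\in\T$. Alternatively, one could apply the chain rule to non-tangential limits together with $|\phi_w(\theta(\zeta))|=1$. I prefer the Julia route because it avoids justifying limits.

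For (iii), I will substitute $z=0$ into the definition of $\theta_w$. Since $|\theta(0)|<1$ and $|w|<1$, the denominator $1-\overline{w}\theta(0)$ is nonzero. If $\theta(0)=0$, the formula reduces to $-w$, which is nonzero when $w\ne0$.

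The only step needing care is (ii). The comparability constants must be uniform, which the bounds on $|1-\overline{w}\lambda|$ guarantee.
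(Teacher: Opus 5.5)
Your proposal is correct. For (i) and (iii) it coincides with the paper's argument: unitarity of $J_w$, and evaluation at $0$.

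For (ii), you and the paper both factor $\theta_w=\phi_w\circ\theta$ and appeal to the $\liminf$ criterion of Lemma~\ref{lem:Julia}. The difference is in how the factor $\frac{1-|\phi_w(\theta(z))|}{1-|\theta(z)|}$ is controlled. The paper uses a limit: the non-tangential convergence $\theta(z)\to\theta(\zeta)\in\T$, conformality of $\phi_w$ at that point, and the ``moreover'' clause of Lemma~\ref{lem:Julia}. Together these give $\frac{1-|\theta_w(z)|}{1-|z|}\to|\phi_w'(\theta(\zeta))|\,|\theta'(\zeta)|$.

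You replace this with a global two-sided bound from the identity for $1-|\phi_w(\lambda)|^2$. The bound holds at every point of $\D$, with constants depending only on $|w|$. It needs no boundary behaviour of $\theta$ at all. Because it is two-sided, it also yields both implications at once, so your appeal to $\phi_{-w}$ is harmless but unnecessary.

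What the paper's route gives in exchange is the quantitative chain rule $|\theta_w'(\zeta)|=|\phi_w'(\theta(\zeta))|\,|\theta'(\zeta)|$. That is not needed for the lemma as stated.
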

	
	\begin{proof}
		(i) holds because $J_w$ is unitary from $\clk_\theta$ onto $\clk_{\theta_w}$, and (iii) is
		immediate on substituting $\lambda=0$.
		
		For (ii), let $\phi_w(\lambda)=(\lambda-w)(1-\overline{w}\lambda)^{-1}$, a M\"obius automorphism
		of $\D$ with inverse $\phi_{-w}$; its only pole is at $\lambda=1/\overline{w} \notin \overline{\D}$, so $\phi_w$ is holomorphic on a neighbourhood of $\overline{\D}$ and maps $\T$
		diffeomorphically onto $\T$ with non-vanishing derivative. Since $\theta_w=\phi_w\circ\theta$ and
		$\theta=\phi_{-w}\circ\theta_w$, it suffices to prove one implication. If $\theta$ has an ADC at
		$\zeta$, then by the Julia--Carath\'eodory theorem (Lemma~\ref{lem:Julia}), a non-tangential approach to $\zeta$ yields
		\[
		\frac{1-|\theta_w(z)|}{1-|z|}
		=\frac{1-|\phi_w(\theta(z))|}{1-|\theta(z)|}\cdot\frac{1-|\theta(z)|}{1-|z|}
		\longrightarrow|\phi_w'(\theta(\zeta))|\cdot|\theta'(\zeta)|<\infty,
		\]
		the first factor tending to $|\phi_w'(\theta(\zeta))|\in(0,\infty)$ because $\phi_w$ is conformal
		at the unimodular point $\theta(\zeta)$. Recall that the convergence here is non-tangential. By Lemma~\ref{lem:Julia}, $\theta_w$ has an
		ADC at $\zeta$. The converse can be proved by the same argument applied to $\phi_{-w}$ and $\theta_w$.
	\end{proof}
	
	The following lemma is easy to verify and has been established in \cite[Lemma 13.2]{Sarason}.
	
	\begin{lemma}\label{lem:crofoot}
		Let $w\in\D$. For $\lambda\in\D$,
		\[
		J_w k_{\lambda}^\theta=\frac{1-w\overline{\theta(\lambda)}}{\sqrt{1-|w|^{2}}}\,k_\lambda^{\theta_w},
		\qquad
		J_w \wt{k}_{\lambda}^\theta=\frac{1-\overline{w}\theta(\lambda)}{\sqrt{1-|w|^{2}}}\,\wt{k}_\lambda^{\theta_w}.
		\]
	\end{lemma}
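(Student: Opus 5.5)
The plan is a direct computation from the explicit formulas for the kernels, using the Möbius identities for $\theta_w=\phi_w\circ\theta$. Since $J_w$ is simply multiplication by $\sqrt{1-|w|^{2}}\,(1-\overline{w}\theta)^{-1}$, both identities follow once $k_\lambda^{\theta_w}$ and $\wt{k}_\lambda^{\theta_w}$ are expressed as explicit multiples of $k_\lambda^{\theta}$ and $\wt{k}_\lambda^{\theta}$ by functions of $\theta$.

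For the reproducing kernel, I would start from
\[
k_\lambda^{\theta_w}(z)=\frac{1-\overline{\theta_w(\lambda)}\,\theta_w(z)}{1-\overline{\lambda}z}
\]
and put $1-\overline{\theta_w(\lambda)}\theta_w(z)$ over the common denominator $(1-w\overline{\theta(\lambda)})(1-\overline{w}\theta(z))$. The numerator is
\[
(1-w\overline{\theta(\lambda)})(1-\overline{w}\theta(z))-(\overline{\theta(\lambda)}-\overline{w})(\theta(z)-w).
\]
Expanding, the cross terms $\overline{w}\theta(z)$ and $w\overline{\theta(\lambda)}$ cancel. The numerator therefore collapses to $(1-|w|^{2})(1-\overline{\theta(\lambda)}\theta(z))$. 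This gives
\[
k_\lambda^{\theta_w}=\frac{1-|w|^{2}}{(1-w\overline{\theta(\lambda)})(1-\overline{w}\theta)}\,k_\lambda^\theta .
\]
Multiplying $k_\lambda^\theta$ by $\sqrt{1-|w|^{2}}\,(1-\overline{w}\theta)^{-1}$ and comparing with this expression yields the first identity:
\[
J_w k_\lambda^\theta=\frac{1-w\overline{\theta(\lambda)}}{\sqrt{1-|w|^{2}}}\,k_\lambda^{\theta_w}.
\]

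For the conjugate kernel I would do the same with $\theta_w(z)-\theta_w(\lambda)$, now over the common denominator $(1-\overline{w}\theta(z))(1-\overline{w}\theta(\lambda))$. The numerator is
\[
(\theta(z)-w)(1-\overline{w}\theta(\lambda))-(\theta(\lambda)-w)(1-\overline{w}\theta(z)),
\]
which simplifies to $(1-|w|^{2})(\theta(z)-\theta(\lambda))$ because the terms $\overline{w}\theta(z)\theta(\lambda)$ and $w$ cancel. Dividing by $z-\lambda$ gives
\[
\wt{k}_\lambda^{\theta_w}=\frac{1-|w|^{2}}{(1-\overline{w}\theta)(1-\overline{w}\theta(\lambda))}\,\wt{k}_\lambda^\theta ,
\]
and rearranging exactly as before gives the second identity.

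There is no real obstacle here. The only care needed is bookkeeping of conjugates: the factor $1-w\overline{\theta(\lambda)}$ appears in the first formula and $1-\overline{w}\theta(\lambda)$ in the second. Both factors are nonzero since $|w|<1$ and $|\theta(\lambda)|<1$, so all divisions are legitimate. Membership in $\clk_{\theta_w}$ is automatic, because the right-hand sides are kernels of $\clk_{\theta_w}$ (alternatively, because $J_w$ maps $\clk_\theta$ onto $\clk_{\theta_w}$ by Remark~\ref{rem:crofoot}).
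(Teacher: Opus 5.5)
Your direct computation is correct: both numerator identities, $(1-|w|^{2})\bigl(1-\overline{\theta(\lambda)}\theta\bigr)$ and $(1-|w|^{2})\bigl(\theta-\theta(\lambda)\bigr)$, check out, and the rearrangement with $J_w$ gives exactly the stated constants. This is the routine verification the paper intends; it offers no argument of its own beyond calling the identities easy to verify and citing Sarason's Lemma 13.2.
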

	
	\begin{corollary}\label{cor:boundary}
		The identities in Lemma \ref{lem:crofoot} also hold whenever $\zeta\in\T$ is a point at which $\theta$ has an ADC, the
		kernels $k_\zeta^\theta$ and $k_\zeta^{\theta_w}$ then being elements of $\clk_\theta$
		and $\clk_{\theta_w}$.
	\end{corollary}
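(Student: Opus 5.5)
The plan is to verify both identities by direct computation from the explicit boundary formulas, exactly mirroring the interior case, after first checking that all objects are well defined.

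\textbf{Well-definedness.} Fix $\zeta\in\T$ at which $\theta$ has an ADC.
\begin{itemize}
\item By Lemma~\ref{lem:transfer}(ii), $\theta_w$ also has an ADC at $\zeta$. By the Ahern--Clark discussion at the start of this section, $k_\zeta^{\theta_w}$ and $\wt{k}_\zeta^{\theta_w}=C_{\theta_w}k_\zeta^{\theta_w}$ then lie in $\clk_{\theta_w}$.
\item Since $\theta_w=\phi_w\circ\theta$, where $\phi_w$ is the M\"obius map from the proof of Lemma~\ref{lem:transfer} and is continuous on a neighbourhood of $\overline{\D}$, the non-tangential limits satisfy $\theta_w(\zeta)=\phi_w(\theta(\zeta))$.
\item Because $|\theta(\zeta)|=1$ and $|w|<1$, the quantities $1-w\overline{\theta(\zeta)}$ and $1-\overline{w}\theta(\zeta)$ are nonzero.
\end{itemize}

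\textbf{Two M\"obius identities.} For $a,b$ with $1-\overline{w}a\ne0$ and $1-\overline{w}b\ne0$, direct expansion gives
\[
1-\overline{\phi_w(a)}\,\phi_w(b)=\frac{(1-|w|^{2})(1-\overline{a}\,b)}{(1-w\overline{a})(1-\overline{w}b)},
\qquad
\phi_w(b)-\phi_w(a)=\frac{(1-|w|^{2})(b-a)}{(1-\overline{w}b)(1-\overline{w}a)}.
\]
These are the same algebraic facts that underlie Lemma~\ref{lem:crofoot} in the interior case. We apply them pointwise on $\D$ with $a=\theta(\zeta)$ and $b=\theta(z)$.

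\textbf{The kernel $k_\zeta^\theta$.} From the definition,
\[
J_wk_\zeta^\theta=\frac{\sqrt{1-|w|^2}\,\bigl(1-\overline{\theta(\zeta)}\theta\bigr)}{(1-\overline{w}\theta)(1-\overline{\zeta}z)}.
\]
By the first identity,
\[
k_\zeta^{\theta_w}=\frac{1-\overline{\theta_w(\zeta)}\theta_w}{1-\overline{\zeta}z}=\frac{(1-|w|^2)\bigl(1-\overline{\theta(\zeta)}\theta\bigr)}{(1-w\overline{\theta(\zeta)})(1-\overline{w}\theta)(1-\overline{\zeta}z)}.
\]
Comparing the two expressions gives $J_wk_\zeta^\theta=\dfrac{1-w\overline{\theta(\zeta)}}{\sqrt{1-|w|^2}}\,k_\zeta^{\theta_w}$.

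\textbf{The conjugate kernel.} At the boundary point $\wt{k}_\zeta^\theta=(\theta-\theta(\zeta))/(z-\zeta)$, by \eqref{eq:bdryconj} together with the explicit formulas. The second identity gives
\[
\theta_w-\theta_w(\zeta)=\frac{(1-|w|^2)\bigl(\theta-\theta(\zeta)\bigr)}{(1-\overline{w}\theta)(1-\overline{w}\theta(\zeta))}.
\]
Dividing by $z-\zeta$ and comparing with $J_w\wt{k}_\zeta^\theta$ yields $J_w\wt{k}_\zeta^\theta=\dfrac{1-\overline{w}\theta(\zeta)}{\sqrt{1-|w|^2}}\,\wt{k}_\zeta^{\theta_w}$.

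\textbf{Main obstacle.} The only delicate point is justifying that the boundary expressions are the genuine elements of the model spaces; the algebra itself is routine. We handle this by appealing to the Ahern--Clark theorem for both $\theta$ and $\theta_w$, which is legitimate by Lemma~\ref{lem:transfer}(ii).

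\textbf{Alternative route.} One could instead use that $k_\lambda^\theta\to k_\zeta^\theta$ weakly as $\lambda\to\zeta$ non-tangentially. Then one passes to the limit in Lemma~\ref{lem:crofoot} using boundedness of $J_w$ and $\theta(\lambda)\to\theta(\zeta)$. This only gives weak convergence, so it additionally requires the weak convergence of the kernels on the $\theta_w$ side; for that reason the direct computation above is preferable.
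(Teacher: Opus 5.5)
Your proof is correct, but it follows a different route from the paper's.

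\textbf{The paper's route.} The paper passes to the limit. By Lemma~\ref{lem:transfer}(ii), both $\theta$ and $\theta_w$ have an ADC at $\zeta$. Hence $k_\zeta^\theta,\wt{k}_\zeta^\theta,k_\zeta^{\theta_w},\wt{k}_\zeta^{\theta_w}$ are non-tangential norm limits of the corresponding interior kernels. Letting $\lambda\to\zeta$ in Lemma~\ref{lem:crofoot}, using that $J_w$ is bounded and $\theta(\lambda)\to\theta(\zeta)$, gives the result.

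\textbf{Your route.} You verify the identities directly as equalities of explicit functions. You use the closed form of the boundary kernels for $\theta$ and $\theta_w$ (Ahern--Clark), the relation $\theta_w(\zeta)=\phi_w(\theta(\zeta))$, and the two M\"obius identities, which are correct. Your description $\wt{k}_\zeta^\theta=(\theta-\theta(\zeta))/(z-\zeta)$ is also correct. Alternatively, the second identity follows from the first by applying \eqref{eq:bdryconj} to both $\theta$ and $\theta_w$.

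\textbf{Comparison.} Your computation is uniform in the point, so it re-proves Lemma~\ref{lem:crofoot} at the same time and makes the constants transparent. Its cost is that it needs the closed-form description of boundary kernels for $\theta_w$. The paper's argument needs only that boundary kernels are limits of interior ones, so it would apply to any kernel identity depending continuously on $\lambda$.

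\textbf{A correction to your closing remark.} The limiting route has no gap. Weak convergence $k_\lambda^{\theta_w}\to k_\zeta^{\theta_w}$ is immediate from the existence of non-tangential limits in $\clk_{\theta_w}$. That is exactly the input you also use, namely Lemma~\ref{lem:transfer}(ii). So uniqueness of weak limits already closes the argument. In fact the convergence is in norm, since $\norm{k_\lambda^\theta-k_\zeta^\theta}^2=k_\lambda^\theta(\lambda)-2\operatorname{Re}k_\zeta^\theta(\lambda)+k_\zeta^\theta(\zeta)\to0$ by Lemma~\ref{lem:Julia}.
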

	
	\begin{proof}
		Let
		$\zeta\in\T$ be a point at which $\theta$ has an ADC. By Lemma~\ref{lem:transfer}, $\theta_w$
		also has an ADC at $\zeta$; hence $k_\zeta^\theta$, $k_\zeta^{\theta_w}$, $\wt{k}_\zeta^{\theta}$ and $\wt{k}_\zeta^{\theta_w}$
		lie in the respective model spaces and are the non-tangential norm-limits of $k_\lambda^\theta$,
		$k_\lambda^{\theta_w}$, $\wt{k}_\lambda^{\theta}$ and $\wt{k}_\lambda^{\theta}$ respectively as $\lambda\to\zeta$. Passing to this limit in the identities of Lemma \ref{lem:crofoot} for
		$\lambda\in\D$, and using $\theta(\lambda)\to\theta(\zeta)$ non-tangentially, proves the assertion.
	\end{proof}
	
	Now we are ready to give the proof of the main theorem.
	
	\begin{proof}[Proof of Theorem~\ref{thm:main}]
		Assume first that $\dim \clk_{\theta} \geq 3$. We will first prove the result when  $\theta(0)\neq 0$.
		Let $C\in\cli_\theta$ such that
		\begin{equation}\label{eq:twodefects1}
			D_T(C)=\psi\otimes k_0^\theta+k_0^\theta\otimes\chi
			\qquad \text{and} \qquad
			D_H(C)=f\otimes k_0^\theta+\wt{k}_0^\theta\otimes g
		\end{equation}
		with $\psi,\chi,f,g\in \clk_\theta$
		and note that
		$V_1=\spn\{k_1^\theta,k_{-1}^\theta\}$ by Lemma~\ref{lem:V1}. We show that  $\ran C\subseteq V_1$ and $\ran C^* \subseteq V_1$. Subtracting the two identities in \eqref{eq:twodefects1} yields
		\[
		\bigl(S_\theta^{*}-S_\theta\bigr)C\,S_\theta^{*}=D_T(C)-D_H(C)
		=(\psi-f)\otimes k_0^\theta+k_0^\theta\otimes\chi-\wt{k}_0^\theta\otimes g .
		\]
		By Lemma~\ref{lem:defect} we have $\psi-f\in\spn\{k_0^\theta,\wt{k}_0^\theta\}$, so the range of
		the right-hand side is contained in $\spn\{k_0^\theta,\wt{k}_0^\theta\}$. Since $\theta(0)\ne0$,
		$S_\theta^{*}$ is invertible, in particular, surjective so
		$\ran\bigl((S_\theta^{*}-S_\theta)CS_\theta^{*}\bigr)=\bigl(S_\theta^{*}-S_\theta\bigr)(\ran C)$.
		Hence $\bigl(S_\theta^{*}-S_\theta\bigr)(\ran C)\subseteq\spn\{k_0^\theta,\wt{k}_0^\theta\}$,
		that is, $\ran C\subseteq V_1$.
		
		Again  $C\in\clt_\theta$  is $C_\theta$-symmetric, that is,
		$C^{*}=C_\theta CC_\theta$. Then  $\ran C^{*}=C_\theta(\ran C)\subseteq C_\theta(V_1)$. By
		\eqref{eq:bdryconj}, $C_\theta k_{\pm1}^{\theta}=\wt{k}_{\pm1}^{\theta}
		=\pm\,\theta(\pm1)\,k_{\pm1}^{\theta}$, so $C_\theta(V_1)=V_1$. Hence $\ran C^* \subseteq V_1$.
		
		Therefore $\ran C \subseteq V_1$ and $\ran C^*  \subseteq V_1$, so $C=P_{V_1}CP_{V_1}$, where $P_{V_1}$ is the orthogonal projection onto $V_1$. Hence $C$ lies in the space
		\[
		\{X \in \clb(\clk_\theta):\ran X\subseteq V_1,\ \ran X^{*}\subseteq V_1\},
		\]
		is isomorphic to $\clb(V_1)$ and  thus  has dimension
		$(\dim V_1)^{2}=4$; the four operators $k_i^\theta \otimes k_j^\theta$ with $i,j \in \{-1,1\}$ belong to it and are linearly independent by Corollary~\ref{cor:kindep}, hence form a basis. Therefore, there exist $c_{i,j} \in \C$, $i,j \in \{-1,1\}$, such that
		\[
		C=c_{1,1}\,k_1^\theta\otimes k_1^\theta+c_{1,-1}\,k_1^\theta\otimes k_{-1}^\theta
		+c_{-1,1}\,k_{-1}^\theta\otimes k_1^\theta+c_{-1,-1}\,k_{-1}^\theta\otimes k_{-1}^\theta .
		\]
		
		It remains to show that the cross terms vanish. Proposition~\ref{prop:TTO} gives $D_T\bigl(k_{i}^{\theta}\otimes k_{i}^{\theta}\bigr)\in\clr_T$. Writing $\varepsilon_{\pm1}=\pm1$, Lemma~\ref{lem:bdry} 	at $\zeta=\pm1$ reads $S_\theta k_{i}^{\theta}=\varepsilon_i(k_{i}^{\theta}-k_0^\theta)$. For
		$i\ne j$, we have $\varepsilon_i\varepsilon_j=-1$, so
		\[
		D_T\bigl(k_{i}^{\theta}\otimes k_{j}^{\theta}\bigr)
		=k_{i}^{\theta}\otimes k_{j}^{\theta}
		+\bigl(k_{i}^{\theta}-k_0^\theta\bigr)\otimes\bigl(k_{j}^{\theta}-k_0^\theta\bigr)
		\equiv 2\,k_{i}^{\theta}\otimes k_{j}^{\theta}-k_i^\theta \otimes k_0^\theta-k_0^\theta \otimes k_j^\theta,
		\]
		the last two terms of the expansion lie in $\clr_T$.
		Since $C\in\clt_\theta$ we have $D_T(C)\in\clr_T$, so there exist $u,v\in\clk_\theta$ with
		\begin{equation}\label{eq:cross}
			2c_{1,-1}\,k_1^\theta\otimes k_{-1}^\theta+2c_{-1,1}\,k_{-1}^\theta\otimes k_1^\theta
			=u\otimes k_0^\theta+k_0^\theta\otimes v .
		\end{equation}
		By Corollary~\ref{cor:kindep} the kernels $k_0^\theta,k_1^\theta,k_{-1}^\theta$ are linearly 	independent, since $\dim\clk_\theta\ge3$, hence the joint evaluation map $\clk_\theta\to\C^{3}$ is surjective and we may choose $h\in \clk_\theta$ with $\ip{h}{k_0^\theta}=\ip{h}{k_1^\theta}=0$ and
		$\ip{h}{k_{-1}^\theta}=1$. Applying \eqref{eq:cross} to $h$ gives
		\[
		2c_{1,-1}k_1^\theta=\ip{h}{v}k_0^\theta\in\C k_0^\theta,
		\]
		and $k_1^\theta\notin\C k_0^\theta$ by
		Lemma~\ref{lem:position}, so $c_{1,-1}=0$; interchanging the roles of $1$ and $-1$ gives $c_{-1,1}=0$.
		
		Hence $C=c_{1,1}P_1+c_{-1,-1}P_{-1}$, that is,  $\cli_\theta \subseteq \spn\{P_1,P_{-1}\}$; the 	reverse inclusion is established in  Proposition~\ref{prop:gens}.\vspace{1mm}
		
		Now, for  $\theta(0)=0$,
		fix	$w\in\D\setminus\{0\}$. By Lemma~\ref{lem:transfer}, $\theta_w$ satisfies
		$\dim\clk_{\theta_w}\ge3$, has an ADC at $\pm1$, and has $\theta_w(0)=-w\ne0$, so previous case  gives
		\[
		\cli_{\theta_w}=\spn\{k_1^{\theta_w}\otimes k_1^{\theta_w},k_{-1}^{\theta_w}\otimes k_{-1}^{\theta_w}\}.
		\]
		Since $\cli_\theta=J_w^{-1}\cli_{\theta_w}J_w$,  $\dim\cli_\theta=2$;
		and since $\theta$ has an ADC at $\pm1$, Corollary \ref{cor:boundary} gives
		\[
		J_w^{-1}\bigl(k_{\pm1}^{\theta_w}\otimes k_{\pm1}^{\theta_w}\bigr)J_w
		=\bigl(J_w^{-1}k_{\pm1}^{\theta_w}\bigr)\otimes\bigl(J_w^{-1}k_{\pm1}^{\theta_w}\bigr)
		\in\C\,P_{\pm1},
		\]
		and therefore $\cli_\theta=\spn\{P_1,P_{-1}\}$.
		
		It remains to treat $\dim \clk_{\theta}=2$, in that case  $\dim\clb(\clk_\theta)=4$. Choosing $\zeta\in\T\setminus\{1,-1\}$, 	Propositions~\ref{prop:TTO} and \ref{prop:Hdich}(ii) give 	$k_\zeta^\theta\otimes k_\zeta^\theta\in\clt_\theta\setminus\clh_\theta$ so
		$\clt_\theta\ne\clh_\theta$. Both the classes have dimension $2 \dim \clk_{\theta}-1=3$ (by Section \ref{sec:prelim}), and two
		distinct $3$-dimensional subspaces span each of them, so
		$\dim(\clt_\theta+\clh_\theta)=4$. By inclusion--exclusion $\dim\cli_\theta=3+3-4=2$, and
		Proposition~\ref{prop:gens} identifies $\cli_\theta$ as $\spn\{P_1,P_{-1}\}$.
	\end{proof}

	\newsection{The complete determination of \texorpdfstring{$\cli_\theta$}{theta}}\label{sec:tri}
	
	Theorem~\ref{thm:main} covers inner functions with an ADC at both $\pm1$. We now remove that restriction and determine $\cli_\theta$ for any arbitrary inner function. The main step is  a criterion, in terms of the range  of  $I-S_\theta^{2}$, for  the presence of a generator at $\zeta\in\{1,-1\}$; this criterion holds if and only if $\theta$ has an ADC at $\zeta$. Recall from Lemma~\ref{lem:V1}(i) that $I-S_\theta^{2}$ is injective for every non-constant inner $\theta$. Similarly, $I-S_\theta$ and $I+S_\theta$ are injective.
	
	\begin{lemma}\label{lem:reduce}
		For every non-constant inner $\theta$,
		\[
		k_0^\theta+S_\theta k_0^\theta\in\ran(I-S_\theta^{2})\iff k_0^\theta\in\ran(I-S_\theta),
		\]
		and symmetrically, $k_0^\theta-S_\theta k_0^\theta\in\ran(I-S_\theta^{2})\iff k_0^\theta\in\ran(I+S_\theta)$.
	\end{lemma}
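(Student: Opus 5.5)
The plan is to use the factorization $I-S_\theta^{2}=(I-S_\theta)(I+S_\theta)=(I+S_\theta)(I-S_\theta)$ together with the injectivity of $I\pm S_\theta$, which holds because $S_\theta$ is of class $C_{00}$: if $S_\theta h=\pm h$ then $\|h\|=\|S_\theta^{m}h\|\to0$. The key observation is that
\[
k_0^\theta+S_\theta k_0^\theta=(I+S_\theta)k_0^\theta,\qquad k_0^\theta-S_\theta k_0^\theta=(I-S_\theta)k_0^\theta .
\]

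For the implication ($\Leftarrow$), suppose $k_0^\theta=(I-S_\theta)h$ for some $h\in\clk_\theta$. Since $I+S_\theta$ commutes with $I-S_\theta$,
\[
(I-S_\theta^{2})h=(I+S_\theta)(I-S_\theta)h=(I+S_\theta)k_0^\theta=k_0^\theta+S_\theta k_0^\theta .
\]
Hence $k_0^\theta+S_\theta k_0^\theta\in\ran(I-S_\theta^{2})$.

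For the implication ($\Rightarrow$), suppose $(I-S_\theta^{2})h=k_0^\theta+S_\theta k_0^\theta$. Writing $I-S_\theta^{2}=(I+S_\theta)(I-S_\theta)$, this becomes
\[
(I+S_\theta)\bigl[(I-S_\theta)h-k_0^\theta\bigr]=0 .
\]
Injectivity of $I+S_\theta$ then forces $(I-S_\theta)h=k_0^\theta$, so $k_0^\theta\in\ran(I-S_\theta)$.

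The symmetric statement follows by the same argument with the roles of $I+S_\theta$ and $I-S_\theta$ interchanged. Explicitly, $k_0^\theta=(I+S_\theta)h$ gives $(I-S_\theta^{2})h=(I-S_\theta)k_0^\theta$, and conversely $(I-S_\theta)\bigl[(I+S_\theta)h-k_0^\theta\bigr]=0$ together with injectivity of $I-S_\theta$ gives $k_0^\theta\in\ran(I+S_\theta)$.

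The only nontrivial ingredient is the injectivity of $I\pm S_\theta$. This is the same $C_{00}$ argument as in Lemma~\ref{lem:V1}(i), and the paper has already noted it just before the statement, so I expect no real obstacle.
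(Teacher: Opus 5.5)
Your proposal is correct and follows the paper's proof essentially verbatim: the same factorization $I-S_\theta^{2}=(I\pm S_\theta)(I\mp S_\theta)$ and the same appeal to injectivity of $I\pm S_\theta$ in the forward direction. You also spell out the $C_{00}$ justification for that injectivity, which the paper leaves as a remark preceding the lemma.
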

	
	\begin{proof}
		Suppose that there exists $h \in \clk_{\theta}$ such that $(I+S_\theta)k_0^\theta=(I-S_\theta^{2})h$, which means, $(I+S_\theta)(I-S_\theta)h=(I+S_\theta)k_0^\theta$ gives
		$(I+S_\theta)\bigl[(I-S_\theta)h-k_0^\theta\bigr]=0$. Now, injectivity of $I+S_\theta$ gives
		$(I-S_\theta)h=k_0^\theta$.
		
		For the converse, write   $w_+=k_0^\theta+S_\theta k_0^\theta=(I+S_\theta)k_0^\theta$. If  $k_0^\theta=(I-S_\theta)u$,
		then
		\[
		w_+=(I+S_\theta)(I-S_\theta)u=(I-S_\theta^{2})u\in\ran(I-S_\theta^{2}).
		\]
		
		The statement for $w_-=(I-S_\theta)k_0^\theta$ is identical, with the
		roles of $I-S_\theta$ and $I+S_\theta$ interchanged.
	\end{proof}
	
	\begin{lemma}\label{lem:solve}
		Let $\theta$ be a non-constant inner function. If $h\in\clk_\theta$ satisfies
		$(I-S_\theta)h=k_0^\theta$, then $h(z)=\dfrac{1-\beta\,\theta(z)}{1-z}$ for some constant	$\beta\in\C$. Symmetrically, a solution of $(I+S_\theta)h=k_0^\theta$ has the form
		$h(z)=\dfrac{1-\beta \theta(z)}{1+z}$.
	\end{lemma}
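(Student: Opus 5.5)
The plan is to lift the operator equation $(I-S_\theta)h=k_0^\theta$ to an identity between functions in $H^2$. Recall that $S_\theta h=P_{\clk_\theta}(zh)$ and $zh-P_{\clk_\theta}(zh)\in\theta H^2$. So $zh=S_\theta h+\theta q$ for some $q\in H^2$. Moreover, since $h\in\clk_\theta$ we have $\overline{\theta}zh\in\overline{z}\,\overline{H^2}\cdot z=\overline{H^2}$. Hence $q=P_+(\overline{\theta}zh)$ is a constant, namely the zeroth Fourier coefficient of $\overline{\theta}zh$. This gives the standard formula
\[
S_\theta h=zh-c\,\theta,\qquad c=\ip{zh}{\theta}\in\C .
\]
It agrees with the computations in the proof of Lemma~\ref{lem:bdry}, where $P_{\clk_\theta}1=k_0^\theta$ and $P_{\clk_\theta}\theta=0$ were used. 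If one prefers to justify it that way, note that $\theta H^2\ominus z\theta H^2=\C\theta$ and that $zh\perp z\theta H^2$.

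Substituting into the equation, $(I-S_\theta)h=k_0^\theta$ becomes
\[
h-zh+c\,\theta=1-\overline{\theta(0)}\,\theta ,
\]
which is an identity in $H^2$. It follows that $(1-z)h(z)=1-\beta\theta(z)$ with $\beta=\overline{\theta(0)}+c$. Since $1-z$ does not vanish on $\D$, we may divide pointwise in $\D$ and obtain $h(z)=\dfrac{1-\beta\theta(z)}{1-z}$, as claimed. The case $(I+S_\theta)h=k_0^\theta$ is identical: we get $(1+z)h=1-\beta\theta$ with $\beta=\overline{\theta(0)}-c$, hence $h=\dfrac{1-\beta\theta}{1+z}$.

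The only point needing care is the claim that the correction term $zh-S_\theta h$ is a \emph{constant} multiple of $\theta$, rather than a general element of $\theta H^2$. I will verify this directly: for $m\ge1$ we have $\ip{zh}{\theta z^m}=\ip{h}{\theta z^{m-1}}=0$, since $h\perp\theta H^2$. Hence the projection of $zh$ onto $\theta H^2$ lies in $\C\theta$. Everything else is algebra.
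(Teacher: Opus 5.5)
Your proof is correct and is essentially the paper's argument: both write $S_\theta h=zh-\gamma\theta$ with $\gamma=P_+(\overline{\theta}zh)$ a constant (since $h\perp\theta H^2$), substitute into $(I\mp S_\theta)h=1-\overline{\theta(0)}\theta$, and read off $\beta=\overline{\theta(0)}\pm\gamma$. Your check via the orthonormal basis $\{\theta z^m\}_{m\ge0}$ of $\theta H^2$ is only a more explicit justification of the same constancy step.
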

	
	\begin{proof}
		For $g\in H^2$, the relation  $P_{\theta H^2}=M_{\theta}P_+ M_{\bar{\theta}}$ yields  $P_{\theta H^2}g=\theta\,P_+(\overline{\theta}g)$. Hence, for $h \in\clk_\theta$, 	$ S_\theta h=P_{\clk_\theta}(zh)=zh-\theta\,P_+(\overline{\theta}z h).$ Note that $P_+(\overline{\theta}h)=0$, i.e.\ $\overline{\theta}h$ has only strictly negative	Fourier powers, so	$\gamma:=P_+(\overline{\theta}z h)$ is a {constant}. Therefore
		\[
		(I-S_\theta)h=(1-z)h+\gamma\,\theta.
		\]
		Now by the hypothesis  	$(I-S_{\theta})h=k_0^\theta=1-\overline{\theta(0)}\theta$. It follows
		$(1-z)h=1-\bigl(\overline{\theta(0)}+\gamma\bigr)\theta$. Hence
		$h=\dfrac{1-\beta\theta}{1-z}$ with $\beta=\overline{\theta(0)}+\gamma$.
		
		Similarly,  the 	$(I+S_\theta)h=k_0^\theta$ follows  $(1+z)h=1-(\overline{\theta(0)}-\gamma)\theta$. Hence $h=\dfrac{1-\beta\theta}{1+z}$ with $\beta=\overline{\theta(0)}-\gamma$.
	\end{proof}
	
	\begin{lemma}\label{lem:AC}
		Let $\theta$ be a non-constant inner function and $\zeta\in\{1,-1\}$. The function
		$\dfrac{1-\beta\theta(z)}{1-\overline{\zeta}z} \in H^2$ for some $\beta\in\C$ if and only if
		$\theta$ has an ADC at $\zeta$; in that case $\beta=\overline{\theta(\zeta)}$ and the function is
		$k_\zeta^\theta$.
	\end{lemma}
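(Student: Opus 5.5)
The backward implication is immediate. If $\theta$ has an ADC at $\zeta$, the Ahern--Clark discussion at the start of Section~\ref{sec:upper} gives $k_\zeta^\theta=\dfrac{1-\overline{\theta(\zeta)}\theta}{1-\overline{\zeta}z}\in\clk_\theta\subseteq H^2$. So $\beta=\overline{\theta(\zeta)}$ works.

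For the forward direction, suppose $h:=\dfrac{1-\beta\theta}{1-\overline{\zeta}z}\in H^2$.

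\textbf{Step 1: $|\beta|=1$.} Since $h\in H^2$, we have $|1-\beta\theta|=|1-\overline{\zeta}z|\,|h|$ a.e. on $\T$. Hence $\int_\T |1-\beta\theta|^2/|1-\overline\zeta z|^2\,dm<\infty$. Suppose $|\beta|\neq1$. Then $|1-\beta\theta|\ge \bigl|1-|\beta|\bigr|>0$ a.e. on $\T$, because $|\theta|=1$ there. The integral then dominates a multiple of $\int dm/|1-\overline\zeta z|^2=\infty$, a contradiction. So $|\beta|=1$, and we may write $\beta=\overline{\eta}$ with $\eta\in\T$.

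\textbf{Step 2: $h\in\clk_\theta$.} Since $1-\overline{\eta}\theta\in H^\infty$ and $h\in H^2$, we check $h\perp\theta H^2$. On $\T$, $\overline{\theta}h=\dfrac{\overline\theta-\overline\eta}{1-\overline\zeta z}$. Using $\overline{z}=1/z$ on $\T$, this equals $\overline{z}\,\zeta\,\dfrac{\overline{\theta}-\overline{\eta}}{\zeta\overline z-1}\cdot(-1)$, which is $\overline{z}$ times the conjugate of an $L^2$ function $-\bar\zeta\,\overline{h}\cdot(\ldots)$. More cleanly, $\overline{\theta}h=-\zeta\overline{z}\,\overline{\eta}\,\overline{\theta}\,\overline{h}\cdot\bigl(\ldots\bigr)$. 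I would verify this directly: one checks $\overline{\theta}h=-\overline{\eta}\zeta\,\overline{z}\,\overline{h}$ a.e. on $\T$, since $\overline{h}=\dfrac{1-\eta\overline\theta}{1-\zeta\overline z}$. Because $\overline{z}\,\overline{h}\in\overline{zH^2}=H^2_-$, we get $P_+(\overline\theta h)=0$, so $h\in\clk_\theta$. (This is the conjugation identity $C_\theta h=-\eta\overline\zeta\,h$ in disguise.)

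\textbf{Step 3: ADC via Lemma~\ref{lem:Julia}.} Since $h\in\clk_\theta$, we evaluate $\|h\|^2$ by testing against reproducing kernels. For $\lambda=r\zeta$ with $r\uparrow1$,
\[
h(r\zeta)=\ip{h}{k_{r\zeta}^\theta}, \qquad |h(r\zeta)|\le \|h\|\,\|k_{r\zeta}^\theta\|=\|h\|\Bigl(\frac{1-|\theta(r\zeta)|^2}{1-r^2}\Bigr)^{1/2}.
\]
Also $h(r\zeta)=\dfrac{1-\overline\eta\theta(r\zeta)}{1-r}$ and $|1-\overline\eta\theta(r\zeta)|\ge 1-|\theta(r\zeta)|$. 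Combining these gives
\[
\frac{1-|\theta(r\zeta)|}{1-r}\le \|h\|\Bigl(\frac{1-|\theta(r\zeta)|^2}{1-r^2}\Bigr)^{1/2}\le \sqrt2\,\|h\|\Bigl(\frac{1-|\theta(r\zeta)|}{1-r}\Bigr)^{1/2}.
\]
Hence $\dfrac{1-|\theta(r\zeta)|}{1-r}\le 2\|h\|^2$, so the liminf in Lemma~\ref{lem:Julia}(i) is finite and $\theta$ has an ADC at $\zeta$.

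\textbf{Step 4: identify $\beta$.} Now $k_\zeta^\theta\in H^2$ as well. Then
\[
h-k_\zeta^\theta=\frac{(\overline{\theta(\zeta)}-\beta)\theta}{1-\overline\zeta z}\in H^2.
\]
If $\beta\ne\overline{\theta(\zeta)}$, this forces $1/(1-\overline\zeta z)$ to be in $L^2(\T)$, since $|\theta|=1$; that is false. So $\beta=\overline{\theta(\zeta)}$ and $h=k_\zeta^\theta$.

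The main obstacle I expect is Step 3, namely passing from $h\in H^2$ to the Julia condition. The kernel-norm estimate above is what I would try first. It needs $h\in\clk_\theta$, which is why Step 2 is included; alternatively one could bound $|h(r\zeta)|\le\|h\|_{H^2}\|k_{r\zeta}\|_{H^2}$ using the Szegő kernel, but that only gives an $O((1-r)^{-1/2})$ bound, which is too weak. So the model-space kernel is essential.
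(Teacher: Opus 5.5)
Your proof is correct, and in its decisive step it takes a different route from the paper. Steps~1 and~2 match the paper: the divergence of $\int_\T|1-\overline{\zeta}z|^{-2}\,dm$ forces $|\beta|=1$, and a conjugation identity shows the function lies in $\clk_\theta$ (the paper does this for $g=(\theta-\lambda)/(z-\zeta)$). Your identity has a sign slip. The correct form is $\overline{\theta}h=\overline{\eta}\,\zeta\,\overline{z}\,\overline{h}$ a.e.\ on $\T$, i.e.\ $C_\theta h=\eta\overline{\zeta}\,h$, in agreement with \eqref{eq:bdryconj}. The extra minus sign, and the garbled intermediate display, are harmless, since only $\overline{\theta}h\in\overline{zH^2}$ is used. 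After that the paper simply invokes \cite[Theorem~7.24]{GMR}: membership of $(\theta-\lambda)/(z-\zeta)$ in $\clk_\theta$ with $\lambda$ unimodular gives the ADC, part~(a) identifies $\lambda=\theta(\zeta)$, and part~(b) supplies the formula for $k_\zeta^\theta$. You instead prove the ADC directly. Writing $h(r\zeta)=\ip{h}{k_{r\zeta}^\theta}$ and using $|1-\overline{\eta}\theta(r\zeta)|\ge 1-|\theta(r\zeta)|$ gives $\frac{1-|\theta(r\zeta)|}{1-r}\le 2\norm{h}^2$, so condition~(i) of Lemma~\ref{lem:Julia} holds. You then pin down $\beta$ by reusing the fact from Step~1 that $(1-\overline{\zeta}z)^{-1}$ is not square-integrable. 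The paper's version is shorter but hands the key implication to a specialised textbook characterization. Yours needs only the Julia--Carath\'eodory lemma already stated in the paper and the Ahern--Clark formula for $k_\zeta^\theta$. It also shows why membership in $\clk_\theta$, not just $H^2$, is essential; your remark that the Szeg\H{o}-kernel bound is too weak is exactly right. Finally, it gives a quantitative bound, consistent a posteriori with $\norm{h}^2=\norm{k_\zeta^\theta}^2=|\theta'(\zeta)|$.
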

	
	\begin{proof}
		Fix $\zeta\in\{1,-1\}$ and write $H_\beta(z)=\dfrac{1-\beta\theta(z)}{1-\overline{\zeta}z}$.
		
		Suppose $H_\beta\in H^2$ for some $\beta\in\C$. Then $|\beta|=1$; otherwise $|1-\beta\theta|\ge\bigl||\beta|-1\bigr|>0$ a.e.\ on $\T$, and since $\frac1{2\pi}\int_0^{2\pi}|1-\overline{\zeta}e^{it}|^{-2}\,dt=\infty$, this would give $\norm{H_\beta}_{H^2}=\infty$. Set $\lambda:=\overline{\beta}\in\T$. Using $1-\overline{\zeta}z=-\overline{\zeta}(z-\zeta)$ and $|\beta|=1$,
		\[
		H_\beta=\zeta\beta\,\frac{\theta-\lambda}{z-\zeta},
		\qquad\text{so}\qquad
		g:=\frac{\theta-\lambda}{z-\zeta}=\overline{\zeta\beta}\,H_\beta\in H^2 .
		\]
		We claim $g\in\clk_\theta$. On $\T$ we have $|z|=|\theta|=|\lambda|=|\zeta|=1$; conjugating $g\,(z-\zeta)=\theta-\lambda$ gives $\overline{g}\,(\overline{z}-\overline{\zeta})=\overline{\theta}-\overline{\lambda}$, and multiplying by $\theta$, then using $1-\overline{\lambda}\theta=-\overline{\lambda}\,g(z-\zeta)$ (from $\theta-\lambda=g(z-\zeta)$ and $|\lambda|=1$) together with $\overline{z}-\overline{\zeta}=-(z-\zeta)/(z\zeta)$, yields
		\[
		\theta\,\overline{g}=\zeta\overline{\lambda}\,zg .
		\]
		Since $zg\in zH^2$, the right-hand side lies in $zH^2$; taking conjugates, $\overline{\theta}g\in\overline{zH^2}=(H^2)^{\perp}$, that is $g\perp\theta H^2$. As $g\in H^2$, this gives $g\in\clk_\theta$.
		
		Thus $\theta$ satisfies condition~(ii) of \cite[Theorem 7.24]{GMR} at $\zeta$ with the value $\lambda$, so $\theta$ has an ADC at $\zeta$; part~(a) of that theorem identifies $\lambda=\theta(\zeta)$, hence $\beta=\overline{\theta(\zeta)}$, and part~(b) gives $H_\beta=\dfrac{1-\overline{\theta(\zeta)}\,\theta}{1-\overline{\zeta}z}=k_\zeta^\theta$.
		
		Conversely, if $\theta$ has an ADC at $\zeta$, then part~(b) of \cite[Theorem 7.24]{GMR} gives $k_\zeta^\theta=\dfrac{1-\overline{\theta(\zeta)}\,\theta}{1-\overline{\zeta}z}\in\clk_\theta\subseteq H^2$, so $\beta=\overline{\theta(\zeta)}$ places $H_\beta=k_\zeta^\theta$ in $H^2$. Hence some $\beta\in\C$ puts $H_\beta$ in $H^2$ precisely when $\theta$ has an ADC at $\zeta$, and then $\beta=\overline{\theta(\zeta)}$ and $H_\beta=k_\zeta^\theta$.
	\end{proof}
	
	\begin{theorem}\label{thm:range}
		Let $\theta$ be a non-constant inner function, and let $\zeta\in\{1,-1\}$. Then
		\[
		k_0^\theta+\zeta\,S_\theta k_0^\theta\in\ran(I-S_\theta^{2})
		\]
		if and only if $\theta$ has an ADC at $\zeta$. Equivalently, the generator $P_\zeta$ exists if and
		only if $\theta$ has an ADC at $\zeta$, in which case it satisfies the preimage relation
		\[
		(I-S_\theta^{2})k_\zeta^\theta=k_0^\theta+\zeta\,S_\theta k_0^\theta.
		\]
	\end{theorem}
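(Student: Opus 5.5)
The proof chains Lemmas~\ref{lem:reduce}, \ref{lem:solve} and \ref{lem:AC}; we treat $\zeta=1$, and $\zeta=-1$ is identical with $I+S_\theta$ in place of $I-S_\theta$.

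\emph{Sufficiency.} Suppose $\theta$ has an ADC at $\zeta$. The preimage relation is essentially Lemma~\ref{lem:V1}(ii), but that lemma assumed ADCs at both $\pm1$. Its computation, however, uses only Lemma~\ref{lem:bdry} at the single point $\zeta$. From $S_\theta k_\zeta^\theta=\zeta(k_\zeta^\theta-k_0^\theta)$ and $\zeta^2=1$ we get
\[
S_\theta^2k_\zeta^\theta=k_\zeta^\theta-k_0^\theta-\zeta S_\theta k_0^\theta,
\]
hence
\[
(I-S_\theta^2)k_\zeta^\theta=k_0^\theta+\zeta S_\theta k_0^\theta,
\]
so $k_0^\theta+\zeta S_\theta k_0^\theta$ lies in the range of $I-S_\theta^2$.

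\emph{Necessity.} Suppose $k_0^\theta+S_\theta k_0^\theta\in\ran(I-S_\theta^2)$. By Lemma~\ref{lem:reduce} there is $h\in\clk_\theta$ with $(I-S_\theta)h=k_0^\theta$; for $\zeta=-1$ we instead get $(I+S_\theta)h=k_0^\theta$. Lemma~\ref{lem:solve} then gives
\[
h=\frac{1-\beta\theta}{1-\overline{\zeta}z}
\]
for some $\beta\in\C$, since $1-\overline{\zeta}z$ equals $1-z$ or $1+z$ in the two cases. This $h$ lies in $\clk_\theta\subseteq H^2$. Lemma~\ref{lem:AC} therefore forces $\theta$ to have an ADC at $\zeta$, with $\beta=\overline{\theta(\zeta)}$ and $h=k_\zeta^\theta$.

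\emph{The ``equivalently'' clause.} By Propositions~\ref{prop:TTO}--\ref{prop:gens}, the generator $P_\zeta=k_\zeta^\theta\otimes k_\zeta^\theta\in\cli_\theta$ is defined exactly when $k_\zeta^\theta\in\clk_\theta$, that is, when $\theta$ has an ADC at $\zeta$. So the statement about $P_\zeta$ is just a restatement of the range criterion. The preimage is unique because $I-S_\theta^2$ is injective, by Lemma~\ref{lem:V1}(i), whose proof uses only that $S_\theta\in C_{00}$.

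There is no real obstacle; the substantive analysis is already contained in Lemma~\ref{lem:AC}. The one point needing care is that Lemma~\ref{lem:V1}(ii) cannot be cited directly under its own hypotheses, so the one-line computation via Lemma~\ref{lem:bdry} must be redone for a single $\zeta$.
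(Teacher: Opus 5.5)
Your proof is correct and follows the paper's argument. Necessity is the same chain Lemma~\ref{lem:reduce} $\to$ Lemma~\ref{lem:solve} $\to$ Lemma~\ref{lem:AC}, and your direct computation of $(I-S_\theta^{2})k_\zeta^\theta$ from Lemma~\ref{lem:bdry} is just the paper's verification $(I\mp S_\theta)k_{\pm1}^\theta=k_0^\theta$ combined with the factorization $I-S_\theta^{2}=(I+S_\theta)(I-S_\theta)$; you are also right that Lemma~\ref{lem:V1}(ii) cannot be cited under a single ADC, but its computation only needs Lemma~\ref{lem:bdry} at $\zeta$.
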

	
	\begin{proof}
		Take $\zeta=1$. By Lemma~\ref{lem:reduce}, $k_0^\theta+S_\theta k_0^\theta\in\ran(I-S_\theta^{2})$	iff there exists $h \in \clk_\theta$ such that $(I-S_\theta)h=k_0^\theta$. Then, by Lemma~\ref{lem:solve}, a solution of  $(I-S_\theta)h=k_0^\theta$ must be
		\[
		H_\beta=\tfrac{1-\beta \theta(z)}{1-z}\in\clk_\theta\subseteq H^2.
		\]
		By Lemma~\ref{lem:AC},
		such $\beta$ exists iff $\theta$ has an ADC at $1$, in which case $H_\beta=k_1^\theta$ and indeed
		\[
		(I-S_\theta)k_1^\theta=k_1^\theta-(k_1^\theta-k_0^\theta)=k_0^\theta
		\]
		by Lemma~\ref{lem:bdry}.  The case $\zeta=-1$ uses the second halves of Lemmas~\ref{lem:reduce}--\ref{lem:AC} and
		\[
		(I+S_\theta)k_{-1}^\theta=k_{-1}^\theta+(k_0^\theta-k_{-1}^\theta)=k_0^\theta.
		\]
	\end{proof}

	\begin{theorem}\label{thm:dim}
		Let $\theta$ be an inner function with $\dim\clk_\theta\ge3$. Then
		\begin{align*}
			\cli_\theta&=\spn\{\,P_\zeta:\zeta\in\{1,-1\},\ \theta\ \text{has an ADC at}\ \zeta\,\}, \\
			\qquad
			\dim\cli_\theta&=\#\{\zeta\in\{1,-1\}:\theta\ \text{has an ADC at}\ \zeta\}\in\{0,1,2\}.
		\end{align*}
	\end{theorem}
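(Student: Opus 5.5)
We follow the proof of Theorem~\ref{thm:main} and replace Lemma~\ref{lem:V1}(iii) by a general description of $V_1=V_2$ that uses Theorem~\ref{thm:range}. The inclusion $\supseteq$ is already known: by Proposition~\ref{prop:gens}, each $P_\zeta$ with $\zeta\in\{\pm1\}$ an ADC point lies in $\cli_\theta$. When both ADCs hold, Theorem~\ref{thm:main} gives the whole result, so we only need the inclusion $\subseteq$ together with the dimension count.

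\textbf{Step 1: identify $V_2$.} Put $L=I-S_\theta^2$ and $W=\spn\{k_0^\theta,S_\theta k_0^\theta\}$, which is two-dimensional by Lemma~\ref{lem:cyclic}. Since $W$ has basis $w_\pm=k_0^\theta\pm S_\theta k_0^\theta$, we get $V_2=L^{-1}(W\cap\ran L)$.
\begin{itemize}
\item By Theorem~\ref{thm:range}, $w_\zeta\in\ran L$ exactly when $\theta$ has an ADC at $\zeta$, and then $L^{-1}w_\zeta=k_\zeta^\theta$.
\item We also need that no other combination $aw_++bw_-$ with $a,b\neq0$ lies in $\ran L$ when only one of them, or neither, does.
\end{itemize}
For the second point, write $aw_++bw_-=(I+S_\theta)(ak_0^\theta)+(I-S_\theta)(bk_0^\theta)$ and suppose it equals $(I-S_\theta)(I+S_\theta)h$. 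Rearranging gives
\[
(I+S_\theta)\bigl[ak_0^\theta-(I-S_\theta)h\bigr]=-(I-S_\theta)bk_0^\theta .
\]
The plan is to use the explicit formula from the proof of Lemma~\ref{lem:solve}, namely $(I\mp S_\theta)h=(1\mp z)h\pm\gamma\theta$. With it, $Lh=aw_++bw_-$ becomes an identity of the form
\[
(1-z^2)h=c_1+c_2 z+(\text{const}+\text{const}\cdot z)\,\theta .
\]
A partial-fraction decomposition of $1/(1-z^2)$ then splits $h$ into a term $\frac{\alpha-\beta_1\theta}{1-z}$ and a term $\frac{\alpha'-\beta_2\theta}{1+z}$. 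Each piece must lie in $H^2$, because the other piece is analytic near the corresponding point. We then apply Lemma~\ref{lem:AC}, scaled: if $\alpha\neq0$ the pieces reduce to its form, and if $\alpha=0$ the piece is $\beta_1\theta/(1-z)\notin H^2$ unless $\beta_1=0$. This forces the coefficient at $\zeta$ to vanish unless $\theta$ has an ADC at $\zeta$.

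Justifying the localization claim rigorously is the main obstacle. The alternative route is a direct argument with $L^2$ norms near $\pm1$: the singularity of $1/(1-z)$ at $1$ cannot be cancelled by a term analytic near $1$. The conclusion of Step~1 is
\[
V_1=V_2=\spn\{k_\zeta^\theta:\zeta\in\{\pm1\}\text{ an ADC point}\},
\]
of dimension equal to the number $N$ of such $\zeta$, using Corollary~\ref{cor:kindep} when $N=2$.

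\textbf{Step 2: case $\theta(0)\ne0$.} We repeat the argument of Theorem~\ref{thm:main} verbatim. Lemma~\ref{lem:defect} and the invertibility of $S_\theta^*$ give $\ran C\subseteq V_1$. The $C_\theta$-symmetry of $C$, together with $C_\theta k_\zeta^\theta\in\C k_\zeta^\theta$ from \eqref{eq:bdryconj}, gives $\ran C^*\subseteq V_1$. Hence $C=P_{V_1}CP_{V_1}$. The cases are then:
\begin{itemize}
\item If $N=0$, then $C=0$.
\item If $N=1$, then $C=cP_\zeta$.
\item If $N=2$, this is Theorem~\ref{thm:main}, where the cross-term argument is already done.
\end{itemize}

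\textbf{Step 3: case $\theta(0)=0$.} We transfer through the Crofoot transform, exactly as in Theorem~\ref{thm:main}. Lemma~\ref{lem:transfer} shows that the ADC set at $\pm1$ and $\dim\clk_\theta$ are unchanged. Corollary~\ref{cor:boundary} shows that $J_w^{-1}P_\zeta^{\theta_w}J_w\in\C P_\zeta$. This gives the equality of spans and $\dim\cli_\theta=N$, where linear independence follows from Proposition~\ref{prop:gens}.
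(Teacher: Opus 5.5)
Steps~2 and~3 and the first bullet of Step~1 match the paper's proof. However, the one genuinely new ingredient of this theorem is left unproved: the inclusion
$W\cap\ran(I-S_\theta^2)\subseteq\spn\{w_\zeta:\theta\ \text{has an ADC at}\ \zeta\}$.

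Your localization claim, ``the other piece is analytic near the corresponding point'', is false in general. Nothing makes $\theta$ analytic near $\pm1$. In exactly the case you need (no ADC at $1$, so $1\in\sigma(S_\theta)$, e.g.\ a singular atom at $1$), the piece $\frac{\alpha'-\beta_2\theta}{1+z}$ need not be analytic near $1$.

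What is true is weaker: that piece is \emph{bounded} on $\{re^{it}:|t|<\pi/2\}$, because $|\theta|\le1$ and $|1+re^{it}|\ge1$ there. Symmetrically, $\frac{\alpha-\beta_1\theta}{1-z}$ is bounded for $|t|\ge\pi/2$. Splitting the integral means $\int_{-\pi}^{\pi}|F(re^{it})|^2\,dt$ of each piece $F$ accordingly, or invoking Smirnov's theorem, does put each piece in $H^2$. Membership in $H^2$, not merely $L^2$, is what Lemma~\ref{lem:AC} requires. You would also need to check that the partial-fraction constant $\alpha$ equals the coefficient $a$ of $w_+$. It does, since the $\theta$-free part of $(1-z^2)h$ is $(a+b)+(a-b)z$. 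As written, though, Step~1 is a plan whose key step you yourself flag as an unresolved obstacle, supported by an incorrect justification.

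None of this function theory is needed, because you already wrote down the identity that finishes the argument. From
\[
(I+S_\theta)\bigl[ak_0^\theta-(I-S_\theta)h\bigr]=-b\,(I-S_\theta)k_0^\theta
\]
you get $b(I-S_\theta)k_0^\theta\in\ran(I+S_\theta)$. Since $(I-S_\theta)k_0^\theta=2k_0^\theta-(I+S_\theta)k_0^\theta$, the assumption $b\neq0$ forces $k_0^\theta\in\ran(I+S_\theta)$. By Lemma~\ref{lem:reduce} this gives $w_-\in\ran(I-S_\theta^2)$, so $\theta$ has an ADC at $-1$ by Theorem~\ref{thm:range}.

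Using instead the factorization $I-S_\theta^2=(I-S_\theta)(I+S_\theta)$ gives $a(I+S_\theta)k_0^\theta\in\ran(I-S_\theta)$. Hence $a\neq0$ forces $k_0^\theta\in\ran(I-S_\theta)$, and so an ADC at $1$. This is exactly the paper's argument. It reduces the two-point statement to the already-proved one-point criterion of Theorem~\ref{thm:range}, with no localization at all. Once $W\cap\ran(I-S_\theta^2)$ is identified this way, your Steps~2 and~3 go through as you describe.
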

	
	\begin{proof}
		Set $w_\pm=k_0^\theta\pm S_\theta k_0^\theta=(I\pm S_\theta)k_0^\theta$, so that $W=\spn\{w_+,w_-\}$
		is two-dimensional by Lemma~\ref{lem:cyclic}. Write $d$ for the number of points $\zeta\in\{1,-1\}$
		at which $\theta$ has an ADC. By Theorem~\ref{thm:range}, $w_+\in\ran(I-S_\theta^{2})$ if and only if
		$\theta$ has an ADC at $1$, and $w_-\in\ran(I-S_\theta^{2})$ if and only if $\theta$ has an ADC at
		$-1$. We claim that
		\[
		W\cap\ran(I-S_\theta^{2})=\spn\{\,w_\zeta:\zeta\in\{1,-1\},\ \theta\ \text{has an ADC at}\ \zeta\,\},
		\qquad w_1:=w_+,\ w_{-1}:=w_-,
		\]
		which has dimension $d$, since $w_+$ and $w_-$ are linearly independent. The inclusion $\supseteq$ is immediate from Theorem~\ref{thm:range} as each $w_\zeta$ lies in $W$, and $w_\zeta\in\ran(I-S_\theta^{2})$ precisely when $\theta$ has an ADC at $\zeta$.
		
		For the reverse inclusion, let $w\in W\cap\ran(I-S_\theta^{2})$. Since $\{w_+,w_-\}$ spans $W$, we may write $w=p\,w_++q\,w_-$ with $p,q\in\C$.  So it suffices to show that the coefficient of $w_\zeta$ vanishes whenever $\theta$ has no ADC at $\zeta$; specifically, $p=0$ unless $\theta$ has an ADC at $1$, and $q=0$ unless $\theta$ has an ADC at $-1$. For the coefficient $p$, we factor $I-S_\theta^{2}=(I-S_\theta)(I+S_\theta)$ and choose $h$ with $(I-S_\theta^{2})h=p\,w_++q\,w_-$. Using $w_-=(I-S_\theta)k_0^\theta$,
		\[
		(I-S_\theta)\bigl[(I+S_\theta)h-q\,k_0^\theta\bigr]
		=(I-S_\theta^{2})h-q\,(I-S_\theta)k_0^\theta
		=(p\,w_++q\,w_-)-q\,w_-=p\,w_+,
		\]
		so $p\,w_+\in\ran(I-S_\theta)$. If $p\ne0$, then $w_+=(I+S_\theta)k_0^\theta\in\ran(I-S_\theta)$. Thus 
		\[
		2k_0^\theta=w_++(I-S_\theta)k_0^\theta\in\ran(I-S_\theta),
		\]
		that is, $k_0^\theta\in\ran(I-S_\theta)$. By Lemma~\ref{lem:reduce}, $k_0^\theta\in\ran(I-S_\theta)$ is equivalent to $w_+\in\ran(I-S_\theta^{2})$, which by Theorem~\ref{thm:range} holds if and only if $\theta$ has an ADC at $1$. Thus, $p\ne0$ implies that $\theta$ has an ADC at $1$.
		
		The statement for $q$ follows from the symmetric computation, with $I-S_\theta^{2}=(I+S_\theta)(I-S_\theta)$ and $w_+=(I+S_\theta)k_0^\theta$. Consequently the coefficient of $w_\zeta$ vanishes whenever $\theta$ has no ADC at $\zeta$, which proves $\subseteq$.
		
		As $I-S_\theta^{2}$ is injective (Lemma~\ref{lem:V1}(i)), it maps $V_2$
		bijectively onto $W\cap\ran(I-S_\theta^{2})$, so $\dim V_1=\dim V_2=d$; and, since
		$(I-S_\theta^{2})k_\zeta^\theta=w_\zeta$ for $\zeta\in\{1,-1\}$,
		\[
		V_1=\spn\{\,k_\zeta^\theta:\zeta\in\{1,-1\},\ \theta\ \text{has an ADC at}\ \zeta\,\}.
		\]
		
		By Proposition~\ref{prop:gens}, $P_\zeta\in\cli_\theta$ whenever $\zeta\in\{1,-1\}$ and $\theta$ has
		an ADC at $\zeta$, so
		\[
		\spn\{\,P_\zeta:\zeta\in\{1,-1\},\ \theta\ \text{has an ADC at}\ \zeta\,\}\subseteq\cli_\theta.
		\]
		We prove the reverse inclusion by considering the three values of $d$.
		
		\emph{Case I.} If $d=2$, then $\theta$ has an ADC at both $1$ and $-1$, and Theorem~\ref{thm:main} gives
		$\cli_\theta=\spn\{P_1,P_{-1}\}$.
		
		\emph{Case II.} If $d\le1$. We may assume $\theta(0)\ne0$ (otherwise we replace $\theta$ by a Crofoot
		transform $\theta_w$ with $w\ne0$, which preserves $d$ (Lemma~\ref{lem:transfer}(ii)) and
		$\dim\cli_\theta$ (Remark~\ref{rem:crofoot}) and carries each $P_\zeta$ to a multiple of
		$P_\zeta^{\theta_w}$). Let $C\in\cli_\theta$. Since
		$\theta(0)\ne0$ and $\dim\clk_\theta\ge3$, the proof of Theorem~\ref{thm:main}
		gives $\ran C\subseteq V_1$  and
		$\ran C^{*}=C_\theta(\ran C)\subseteq V_1$.
		
		\emph{Subcase I.} If $d=0$ then $V_1=\{0\}$, so $C=0$ and
		$\cli_\theta=\{0\}$.
		
		\emph{Subcase II.} If $d=1$, with ADC point $\zeta$, then $V_1=\C k_{\zeta}^\theta$. Hence
		$\ran C\subseteq\C k_{\zeta}^\theta$ which implies $C=k_\zeta^\theta \otimes y$ for some $y \in \clk_\theta$. Thus $C^*=y \otimes k_\zeta^\theta$ but
		$\ran C^{*}\subseteq\C k_{\zeta}^\theta$ give $y \in \C k_\zeta^\theta$. In turn, $C\in\C P_{\zeta}$.\\
		In either case,
		\[
		\cli_\theta=\spn\{\,P_\zeta:\zeta\in\{1,-1\},\ \theta\ \text{has an ADC at}\ \zeta\,\}.\]
	\end{proof}
	
	\begin{corollary}\label{cor:cases}
		Let $\dim\clk_\theta\ge3$.
		\begin{enumerate}
			\item If $\theta$ has an ADC at both $\pm1$ (in particular for every finite Blaschke product,
			or whenever $1,-1\notin\sigma(S_\theta)$), then $\dim\cli_\theta=2$ and
			$\cli_\theta=\spn\{P_1,P_{-1}\}$.
			\item If $\theta$ has an ADC at exactly one of $\pm1$, then $\dim\cli_\theta=1$; the
			decomposition of any $T\in\cls_\theta$ is unique up to one complex parameter. This occurs, for
			example, for the atomic singular inner function $\theta(z)=\exp\!\bigl(-\frac{1+z}{1-z}\bigr)$,
			for which $\sigma(S_\theta)=\{1\}$, $\theta$ has no ADC at $1$ and continues analytically across
			$-1$, giving $\cli_\theta=\C\,P_{-1}$.
			\item If $\theta$ does not have an ADC at  $1$ and $-1$, then $\cli_\theta=\{0\}$. In that case, the
			Toeplitz-plus-Hankel decomposition is \emph{unique}. This occurs, for example, for the two-atom
			singular inner function $\theta(z)=\exp\!\bigl(-\frac{1+z}{1-z}-\frac{1-z}{1+z}\bigr)$, with
			atoms at $+1$ and $-1$.
		\end{enumerate}
	\end{corollary}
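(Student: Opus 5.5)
The corollary is a direct specialization of Theorem~\ref{thm:dim}. The plan is to read off each part from the formula
\[
\dim\cli_\theta=\#\{\zeta\in\{1,-1\}:\theta\ \text{has an ADC at}\ \zeta\}
\]
and from the accompanying description of $\cli_\theta$. After that, it remains to check that the stated examples really lie in the claimed cases.

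For (i), the case $d=2$ of Theorem~\ref{thm:dim} (equivalently Theorem~\ref{thm:main}) gives $\cli_\theta=\spn\{P_1,P_{-1}\}$ with dimension $2$. The parenthetical claims follow from the Remark after Lemma~\ref{lem:Julia}:
\begin{itemize}
\item if $\pm1\notin\sigma(S_\theta)$, then $\theta$ has an ADC at $\pm1$;
\item a finite Blaschke product has $\sigma(S_\theta)\cap\T=\emptyset$, so it has an ADC everywhere on $\T$.
\end{itemize}

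For (ii), $d=1$ gives $\cli_\theta=\C P_{\zeta_0}$. By the observation in the introduction, two decompositions $T=A+B=A'+B'$ differ by $A'-A=B-B'\in\cli_\theta$. Conversely, adding and subtracting any $C\in\cli_\theta$ produces another decomposition. So the decompositions are parametrized by $\cli_\theta\cong\C$.

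For the example $\theta=\exp(-\frac{1+z}{1-z})$, I will argue as follows.
\begin{itemize}
\item By \eqref{eq:specthm}, $\sigma(S_\theta)=\supp\mu=\{1\}$, so $-1\notin\sigma(S_\theta)$ and $\theta$ has an ADC at $-1$.
\item At $1$ I will use Lemma~\ref{lem:Julia}(i). Along the radius $z=r\to1$ one has $\theta(r)=e^{-(1+r)/(1-r)}\to0$. Hence $(1-|\theta(r)|)/(1-r)\to\infty$, but this only controls one path, not the $\liminf$.
\item The cleaner route is to note that an ADC at $1$ forces $|\theta(\zeta)|=1$ for the non-tangential limit. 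The radial limit here is $0$, so there is no ADC at $1$.
\end{itemize}
Since $\dim\clk_\theta=\infty\ge3$, Theorem~\ref{thm:dim} gives $\cli_\theta=\C P_{-1}$.

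For (iii), $d=0$ gives $\cli_\theta=\{0\}$, and uniqueness of the decomposition follows from the same difference argument. For the two-atom example, $\theta=\exp\!\bigl(-\frac{1+z}{1-z}-\frac{1-z}{1+z}\bigr)$, I will show that the radial limits at both atoms vanish, so ADC fails at both points.
\begin{itemize}
\item At $z=r\to1$: the first exponent term has real part tending to $-\infty$. The second term $-\frac{1-r}{1+r}$ stays bounded. So $\theta(r)\to0$.
\item At $z=-r\to-1$: symmetrically, $\frac{1-z}{1+z}=\frac{1+r}{1-r}\to\infty$ while the other term stays bounded. So $\theta(-r)\to0$.
\end{itemize}
The model space is again infinite-dimensional.

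The only mildly delicate step is the non-ADC verification. I rely on the definition: an ADC requires the non-tangential limit to have modulus $1$, and a radial limit $0$ contradicts this, since the non-tangential limit must agree with the radial one. Everything else is bookkeeping from Theorem~\ref{thm:dim}.
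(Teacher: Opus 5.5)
Your proposal is correct and takes the same route as the paper: the corollary is read off directly from Theorem~\ref{thm:dim}, and the ``unique up to parameters'' statements follow because two decompositions differ by an element of $\cli_\theta$. Your check of the examples is also sound and supplies details the paper leaves implicit: the ADC at $-1$ comes from $\sigma(S_\theta)=\{1\}$ together with the Remark after Lemma~\ref{lem:Julia}, and the ADC fails at each atom because the radial limit of $\theta$ there is $0$, which is incompatible with $|\theta(\zeta)|=1$.
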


	\newsection{Algebraic consequences}\label{sec:alg}
	
	Throughout this section, $\theta$ has an ADC at both $\pm1$ and $\dim\clk_\theta\ge2$, so that
	$\cli_\theta=\spn\{P_1,P_{-1}\}$ by Theorem~\ref{thm:main}. Note that $P_1$ and $P_{-1}$ are self-adjoint, being of the form
	$p\otimes p$.
	
	\subsection{Uniqueness of the decomposition}
	
	\begin{theorem}\label{thm:unique}
		Let $T\in\cls_\theta$ and fix one decomposition $T=A+B$ with $A\in\clt_\theta$ and
		$B\in\clh_\theta$. Then the generalized decompositions of $T$ having this form are precisely
		\[
		T=\bigl(A+c_1P_1+c_2P_{-1}\bigr)+\bigl(B-c_1P_1-c_2P_{-1}\bigr),
		\qquad c_1,c_2\in\C,
		\]
		and distinct pairs $(c_1,c_2)$ give distinct decompositions.
	\end{theorem}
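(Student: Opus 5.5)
The plan is to prove two inclusions and then injectivity of the parametrization, using only Theorem~\ref{thm:main} and the fact that $\clt_\theta$ and $\clh_\theta$ are linear subspaces of $\clb(\clk_\theta)$.

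First, every displayed pair is a valid decomposition. By Proposition~\ref{prop:gens}, $P_1$ and $P_{-1}$ lie in $\cli_\theta=\clt_\theta\cap\clh_\theta$. Hence, for any $c_1,c_2\in\C$, the operator $c_1P_1+c_2P_{-1}$ belongs to both classes. It follows that $A+c_1P_1+c_2P_{-1}\in\clt_\theta$ and $B-c_1P_1-c_2P_{-1}\in\clh_\theta$, and the two summands add up to $A+B=T$.

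Next, every decomposition has this form. Suppose $T=A'+B'$ with $A'\in\clt_\theta$ and $B'\in\clh_\theta$, and set $E=A'-A$. Since $A+B=A'+B'$, we have $E=B-B'$. Thus $E$ lies in $\clt_\theta$, because it is a difference of elements of $\clt_\theta$, and in $\clh_\theta$, because it is a difference of elements of $\clh_\theta$. So $E\in\cli_\theta$, which equals $\spn\{P_1,P_{-1}\}$ by Theorem~\ref{thm:main}. Writing $E=c_1P_1+c_2P_{-1}$ gives $A'=A+c_1P_1+c_2P_{-1}$ and $B'=B-E=B-c_1P_1-c_2P_{-1}$, which is exactly the displayed form.

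Finally, distinct pairs give distinct decompositions. If $(c_1,c_2)$ and $(c_1',c_2')$ produce the same decomposition, then comparing the Toeplitz parts gives $(c_1-c_1')P_1+(c_2-c_2')P_{-1}=0$. Proposition~\ref{prop:gens} states that $P_1$ and $P_{-1}$ are linearly independent, so $c_1=c_1'$ and $c_2=c_2'$.

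None of these steps is a real obstacle; the substance is carried by Theorem~\ref{thm:main}, which we are allowed to assume. The one point to check is that Theorem~\ref{thm:main} and Proposition~\ref{prop:gens} require exactly the standing hypotheses of this section, namely an ADC at both $\pm1$ and $\dim\clk_\theta\ge2$. They do, so both can be invoked as stated.
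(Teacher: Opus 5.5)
Your proposal is correct and follows the paper's proof step for step. The difference $A'-A=B-B'$ lies in $\cli_\theta=\spn\{P_1,P_{-1}\}$ by Theorem~\ref{thm:main}. For the converse direction you use $P_{\pm1}\in\cli_\theta$ via Proposition~\ref{prop:gens}, which simply packages the paper's direct citation of Propositions~\ref{prop:TTO} and \ref{prop:Hdich}(i). Distinctness then follows from the linear independence of $P_1,P_{-1}$, exactly as in the paper.
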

	
	\begin{proof}
		If $T=A'+B'$ is another such decomposition, then $A'-A=B-B'\in \clt_\theta \cap \clh_\theta= \cli_\theta$. By Theorem~\ref{thm:main}, it equals $c_1P_1+c_2P_{-1}$ for
		some $c_1,c_2\in\C$, and then $A'=A+c_1 P_1 +c_2 P_{-1}$ and  $B'=B-c_1P_1-c_2P_{-1}$.
		
		Conversely, for any $c_1,c_2\in\C$, the operator $A+c_1P_1+c_2P_{-1}\in\clt_\theta$ and
		$B-c_1P_1-c_2P_{-1}\in \clh_\theta$, by Propositions~\ref{prop:TTO} and
		\ref{prop:Hdich}(i), and their sum is $T$. Finally, if two pairs gave the same decomposition
		then $(c_1-c_1')P_1+(c_2-c_2')P_{-1}=0$, so $c_1=c_1'$ and $c_2=c_2'$ by the independence of
		$P_1,P_{-1}$ (Proposition~\ref{prop:gens}).
	\end{proof}
	
	\subsection{Self-adjoint decompositions}
	
	\begin{theorem}\label{thm:selfadj}
		Let $T\in\cls_\theta$ be self-adjoint.
		\begin{enumerate}
			\item There exist self-adjoint $A\in\clt_\theta$ and $B\in\clh_\theta$ with $T=A+B$.
			\item If $T=A'+B'$ is \emph{any} decomposition with $A'\in\clt_\theta$ and $B'\in\clh_\theta$, then
			there are unique $s,t\in\R$ with
			\[
			A'-(A')^{*}=i\bigl(sP_1+tP_{-1}\bigr)=-\bigl(B'-(B')^{*}\bigr),
			\]
			and $A',B'$ are both self-adjoint if and only if $s=t=0$.
			\item With $A,B$ as in \textnormal{(i)}, the generalized self-adjoint decompositions of $T$ are precisely
			\[
			T=\bigl(A+sP_1+tP_{-1}\bigr)+\bigl(B-sP_1-tP_{-1}\bigr)
			\]
			with $s,t\in\R$.
		\end{enumerate}
	\end{theorem}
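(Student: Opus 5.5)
The argument rests on three facts: both $\clt_\theta$ and $\clh_\theta$ are closed under adjoints (recalled in Section~\ref{sec:prelim}), $\cli_\theta=\spn\{P_1,P_{-1}\}$ by Theorem~\ref{thm:main}, and $P_{\pm1}$ are self-adjoint and linearly independent (Proposition~\ref{prop:gens}).

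For (i), I would start from an arbitrary decomposition $T=A_0+B_0$ and symmetrize. Put $A=\tfrac12(A_0+A_0^*)\in\clt_\theta$ and $B=\tfrac12(B_0+B_0^*)\in\clh_\theta$. Since $T=T^*$, we get $A+B=\tfrac12(T+T^*)=T$. Both $A$ and $B$ are self-adjoint by construction. This step requires neither the intersection result nor any assumption on $\theta$.

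For (ii), let $T=A'+B'$ be any decomposition. Then $T=T^*=(A')^*+(B')^*$ is another decomposition, again because both classes are adjoint-closed. Hence
\[
A'-(A')^*=(B')^*-B'\in\cli_\theta ,
\]
so $A'-(A')^*=c_1P_1+c_2P_{-1}$ with unique $c_1,c_2\in\C$ by Theorem~\ref{thm:main} and linear independence. The left side is skew-adjoint, and $P_{\pm1}$ are self-adjoint. Taking adjoints gives $-(c_1P_1+c_2P_{-1})=\overline{c_1}P_1+\overline{c_2}P_{-1}$, and independence then forces $\overline{c_j}=-c_j$, i.e. $c_1=is$ and $c_2=it$ with $s,t\in\R$. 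Uniqueness of $s,t$ is again independence. Finally, $A'$ is self-adjoint iff $s=t=0$, and the displayed identity shows the same condition is equivalent to $B'$ being self-adjoint.

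For (iii), Theorem~\ref{thm:unique} says every decomposition of $T$ has the form $(A+c_1P_1+c_2P_{-1})+(B-c_1P_1-c_2P_{-1})$. Since $A$ is self-adjoint and $P_{\pm1}$ are self-adjoint, the first summand is self-adjoint iff $(c_1-\overline{c_1})P_1+(c_2-\overline{c_2})P_{-1}=0$. By independence this holds iff $c_1,c_2\in\R$, and the second summand then follows automatically. Conversely, real $s,t$ clearly give self-adjoint summands. The only delicate point in the whole proof is the repeated use of linear independence of $P_1,P_{-1}$ to compare coefficients; this is already supplied by Proposition~\ref{prop:gens} under the standing hypotheses, so I expect no real obstacle.
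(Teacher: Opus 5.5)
Your proposal is correct and follows essentially the same route as the paper. You symmetrize for (i), place $A'-(A')^{*}=(B')^{*}-B'$ in $\cli_\theta=\spn\{P_1,P_{-1}\}$ and compare coefficients after taking adjoints for (ii), and invoke Theorem~\ref{thm:unique} together with the self-adjointness and independence of $P_{\pm1}$ for (iii).
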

	
	\begin{proof}
		(i) Write $T=A_0+B_0$ with $A_0\in\clt_\theta$, $B_0\in\clh_\theta$, and put
		$A=\tfrac12(A_0+A_0^{*})$, $B=\tfrac12(B_0+B_0^{*})$. Then $A\in\clt_\theta$ and
		$B\in\clh_\theta$, both are
		self-adjoint, and $A+B=\tfrac12(T+T^{*})=T$.
		
		(ii) Put $D=A'-(A')^{*} \in \clt_\theta$. Then $T=A'+B'$, being self-adjoint, gives
		$D=(B')^{*}-B' \in \clh_\theta$. So,
		$D\in\cli_\theta$ and
		\[
		D=aP_1+bP_{-1},
		\]
		for unique $a,b\in\C$ by Theorem~\ref{thm:main} and
		Proposition~\ref{prop:gens}. Taking adjoints, we get
		\[
		-D=D^{*}=\overline{a}P_1+\overline{b}P_{-1}
		\]
		and comparing coefficients gives
		$\overline{a}=-a$ and $\overline{b}=-b$, that is, $a=is$, $b=it$ with $s,t\in\R$ unique. Finally,
		$A'$ is self-adjoint precisely when $D=0$, i.e.,\ $s=t=0$, and then $B'-(B')^{*}=-D=0$ as well.
		
		(iii) By Theorem~\ref{thm:unique}, every decomposition is $A'=A+c_1P_1+c_2P_{-1}$,
		$B'=B-c_1P_1-c_2P_{-1}$ with $c_1,c_2\in\C$. Since $A$ and $P_{\pm1}$ are self-adjoint,
		$A'-(A')^{*}=(c_1-\overline{c_1})P_1+(c_2-\overline{c_2})P_{-1}$, which vanishes if and only if
		$c_1,c_2$ are real; the same condition makes $B'$ self-adjoint.
	\end{proof}
	
	\begin{remark}\label{rem:sa}
		Part (i) uses only that both classes are closed under adjoints, and would hold whatever
		$\cli_\theta$ turned out to be; it is part (ii) that requires Theorem~\ref{thm:main}, the
		identification of $\cli_\theta$ being what pins the deviation $A'-(A')^{*}$ down to the
		two-real-parameter family $i\spn_{\R}\{P_1,P_{-1}\}$. That it can indeed fail is seen at once:
		for $s\in\R\setminus\{0\}$ the operators $A'=isP_1\in\clt_\theta$ and $B'=-isP_1\in\clh_\theta$
		are non-zero and skew-Hermitian, yet $A'+B'=0$ is self-adjoint. In the setting of
		Corollary~\ref{cor:cases}(ii), the same statements hold with the single parameter $sP_\zeta$
		($s\in\R$); in the setting of (iii), every decomposition of a self-adjoint operator has
		self-adjoint parts.
	\end{remark}
	
	\subsection{An intrinsic inclusion criterion}
	
	The explicit description of $\cli_\theta$ yields a symbol-free test for inclusion in $\cls_\theta$, which, in turn, becomes a novel characterization of $\cls_\theta$,
	refining the displacement calculus of \cite{BM}. Recall the space
	\[
	\clm=\{k_0^\theta\otimes a+\wt{k}_0^\theta\otimes b+c\otimes k_0^\theta+d\otimes S_\theta k_0^\theta:a,b,c,d\in \clk_\theta\}
	\]
	from the introduction, and write
	\[
	\clz_\theta:=\{k_0^\theta\otimes f-\wt{k}_0^\theta\otimes g:f,g\in \clk_\theta\};
	\]
	by \eqref{eq:comm}, $\clz_\theta$ is the subspace in which the commutator $[D_H,D_T]$ takes its values.
	
	\begin{lemma}\label{lem:env}
		For every inner function $\theta$, $\ D_HD_T(\cls_\theta)\subseteq\clm$. If $\theta$ is a finite
		Blaschke product of degree $n\ge2$, then $\dim\clm=4(n-1)$.
	\end{lemma}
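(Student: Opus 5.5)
The plan has two parts: the inclusion $D_HD_T(\cls_\theta)\subseteq\clm$, then the dimension count for finite Blaschke products.

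\emph{Inclusion.} Since $D_HD_T$ is linear and $\clm$ is a linear subspace, it suffices to treat $A\in\clt_\theta$ and $B\in\clh_\theta$ separately.

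For $A\in\clt_\theta$, write $D_T(A)=\psi\otimes k_0^\theta+k_0^\theta\otimes\chi$ by \eqref{eq:TTOchar}. Applying $D_H$ exactly as in \eqref{eq:way1} and using \eqref{eq:s3}, we get
\[
D_HD_T(A)=\psi\otimes k_0^\theta-(S_\theta^*\psi)\otimes(S_\theta k_0^\theta)+k_0^\theta\otimes\chi+\overline{\theta(0)}\,\wt{k}_0^\theta\otimes S_\theta\chi .
\]
Each of these terms is visibly of one of the four shapes defining $\clm$.

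For $B\in\clh_\theta$, use $D_HD_T=D_TD_H+[D_H,D_T]$. Writing $D_H(B)=f\otimes k_0^\theta+\wt{k}_0^\theta\otimes g$ by \eqref{eq:THOchar}, the computation of \eqref{eq:way2} with \eqref{eq:s4} gives
\[
D_TD_H(B)=f\otimes k_0^\theta-(S_\theta f)\otimes(S_\theta k_0^\theta)+\wt{k}_0^\theta\otimes g+\theta(0)\,k_0^\theta\otimes S_\theta g\in\clm .
\]
By \eqref{eq:comm}, the commutator term $[D_H,D_T](B)$ lies in $\clz_\theta\subseteq\clm$. This proves the inclusion.

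\emph{Dimension.} Now let $\theta$ be a finite Blaschke product with $n=\dim\clk_\theta\ge2$. Put $U=\spn\{k_0^\theta,\wt{k}_0^\theta\}$ and $V=\spn\{k_0^\theta,S_\theta k_0^\theta\}$; both are two-dimensional by Lemma~\ref{lem:cyclic}. In the notation $p\otimes q$, $\clm$ is the set of operators $X$ with $X=(\text{range in }U)+(\text{co-range in }V)$, that is, $\clm=\{X:\ran X\subseteq U\}+\{X:\ran X^*\subseteq V\}$. Each summand has dimension $2n$.

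The intersection consists of the operators with range in $U$ and adjoint range in $V$, which is $U\otimes V$, of dimension $4$. Hence
\[
\dim\clm=2n+2n-4=4(n-1).
\]

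To justify the description of the intersection rigorously, I would use the projections from Lemma~\ref{lem:corner}: let $Q_U$ be the orthogonal projection onto $U^\perp$ and $P_V$ the orthogonal projection onto $V^\perp$. Then $X\in\clm$ iff $Q_UXP_V=0$, and the map $X\mapsto Q_UXP_V$ is a surjection of $\clb(\clk_\theta)$ onto $\clb(V^\perp,U^\perp)$. This gives the dimension directly as
\[
n^2-(n-2)^2=4n-4 .
\]

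The main obstacle I expect is bookkeeping the conjugate-linearity conventions of \eqref{eq:tensor}, so as to confirm that every term lands with the correct slot in the correct span. For the dimension count, the only real issue is the linear independence of the two pairs, which Lemma~\ref{lem:cyclic} already supplies.
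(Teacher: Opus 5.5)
Your proof is correct and follows the paper's argument essentially step for step. For the inclusion, you use $D_H(\clr_T)$, $D_T(\clr_H)$ and the commutator term in $\clz_\theta$. For the count $2n+2n-4$, you split $\clm$ into operators with range in $\spn\{k_0^\theta,\wt{k}_0^\theta\}$ plus operators whose adjoint has range in $\spn\{k_0^\theta,S_\theta k_0^\theta\}$. Your alternative corner-map count $n^2-(n-2)^2$ is also valid, and it is the same device the paper uses for $\clr_\Sigma$ in Lemma~\ref{lem:Rsig}.
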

	
	\begin{proof}
		Let $T=A+B$ with $A\in\clt_\theta$ and $B\in\clh_\theta$. Then
		\[
		D_T(A)\in\clr_T \implies D_HD_T(A)\in D_H(\clr_T),
		\]
		while $D_HD_T=D_TD_H+[D_H,D_T]$, together with $D_H(B)\in\clr_H$ and
		\eqref{eq:comm}, gives
		\[
		D_HD_T(B)\in D_T(\clr_H)+\clz_\theta.
		\]
		Each of $D_H(\clr_T)$, $D_T(\clr_H)$ and $\clz_\theta$ is contained in $\clm$. Indeed, by \eqref{eq:s3},
		\[
		D_H(u\otimes k_0^\theta)=u\otimes k_0^\theta-(S_\theta^{*}u)\otimes(S_\theta k_0^\theta)
		\]
		and
		\[
		D_H(k_0^\theta\otimes v)=k_0^\theta\otimes v+\overline{\theta(0)}\,(\wt{k}_0^\theta\otimes S_\theta v),
		\]
		and symmetrically for $D_T(\clr_H)$ using \eqref{eq:s4}, while $\clz_\theta\subseteq\clm$ is clear. Adding the two displays therefore gives $D_HD_T(T)\in\clm$.
		
		For the bound, write $\clm=U+V$ with
		\[
		U=\{X \in \clb(\clk_\theta):\ran X\subseteq\spn\{k_0^\theta,\wt{k}_0^\theta\}\}
		\]
		and
		\[
		V=\{X \in \clb(\clk_\theta):\ran X^{*}\subseteq\spn\{k_0^\theta,S_\theta k_0^\theta\}\}.
		\]
		For $n\ge2$, $U$ and $V$ have dimension $2n$ each (Lemma~\ref{lem:cyclic}), while $U\cap V$ is spanned by the four independent operators $p\otimes q$ with $p\in\{k_0^\theta,\wt{k}_0^\theta\}$ and $q\in\{k_0^\theta,S_\theta k_0^\theta\}$, so that $\dim(U\cap V)=4$. Hence
		\[
		\dim\clm=\dim U+\dim V-\dim(U\cap V)=2n+2n-4=4(n-1). \qedhere
		\]
	\end{proof}
	
	\begin{theorem}\label{thm:criterion}
		Let $\theta$ be a finite Blaschke product with $n=\dim\clk_\theta\ge2$. Then $\dim\cls_\theta=4(n-1)$
		and, for every $T\in\clb(\clk_\theta)$,
		\[
		T\in\cls_\theta\iff D_HD_T(T)\in\clm .
		\]
	\end{theorem}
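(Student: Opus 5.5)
The argument is a dimension count. We show that $D_HD_T$ maps $\cls_\theta$ onto $\clm$; since $D_HD_T$ is injective, the equivalence follows.

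First, $\dim\cls_\theta$. For a finite Blaschke product of degree $n\ge2$, Section~\ref{sec:prelim} gives $\dim\clt_\theta=\dim\clh_\theta=2n-1$. By Theorem~\ref{thm:main}, which applies since every finite Blaschke product has an ADC at $\pm1$, we have $\dim\cli_\theta=2$. Inclusion--exclusion then yields
\[
\dim\cls_\theta=\dim\clt_\theta+\dim\clh_\theta-\dim\cli_\theta=(2n-1)+(2n-1)-2=4n-4=4(n-1).
\]
For $n=2$ this agrees with the last paragraph of the proof of Theorem~\ref{thm:main}, where $\cls_\theta=\clb(\clk_\theta)$ has dimension $4$.

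Next, the criterion. The forward implication $T\in\cls_\theta\Rightarrow D_HD_T(T)\in\clm$ is exactly the first part of Lemma~\ref{lem:env}. For the converse, we use that $\clk_\theta$ is finite-dimensional, so $D_T$ and $D_H$ are injective linear maps on the finite-dimensional space $\clb(\clk_\theta)$ (Lemma~\ref{lem:BMdisp}(i)). Hence they are bijections, and so is $D_HD_T$. Restricting to $\cls_\theta$, the map $D_HD_T\colon\cls_\theta\to\clm$ is well defined by Lemma~\ref{lem:env} and injective. Its domain has dimension $4(n-1)$ by the count above, and its target has dimension $4(n-1)$ by the second part of Lemma~\ref{lem:env}. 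An injective linear map between spaces of equal finite dimension is onto, so $D_HD_T(\cls_\theta)=\clm$.

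Now suppose $D_HD_T(T)\in\clm$. Surjectivity gives some $T'\in\cls_\theta$ with $D_HD_T(T')=D_HD_T(T)$. Injectivity of $D_HD_T$ on all of $\clb(\clk_\theta)$ then forces $T=T'\in\cls_\theta$.

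The only delicate input is the computation $\dim\clm=4(n-1)$, which is already established in Lemma~\ref{lem:env}. That computation relies on $U\cap V$ being exactly four-dimensional, i.e.\ on $\{k_0^\theta,\wt{k}_0^\theta\}$ and $\{k_0^\theta,S_\theta k_0^\theta\}$ each being independent (Lemma~\ref{lem:cyclic}). Beyond that, the proof is pure linear algebra combined with Theorem~\ref{thm:main}.
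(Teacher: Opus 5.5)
Your proof is correct and is essentially the paper's argument. Both get $\dim\cls_\theta=4(n-1)$ by inclusion--exclusion with Theorem~\ref{thm:main}, take the forward direction from Lemma~\ref{lem:env}, and prove the converse by combining injectivity of $D_HD_T$ with $\dim\cls_\theta=\dim\clm=4(n-1)$. The only difference is cosmetic: the paper compares $\cls_\theta\subseteq(D_HD_T)^{-1}(\clm)$ as subspaces of equal dimension, whereas you show that $D_HD_T$ maps $\cls_\theta$ onto $\clm$ and then pull back by injectivity; this is the same count.
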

	
	\begin{proof}
		By Theorem~\ref{thm:main}, $\dim\cli_\theta=2$; with $\dim\clt_\theta=\dim\clh_\theta=2n-1$
		(Section~\ref{sec:prelim}), we have $\dim\cls_\theta=(2n-1)+(2n-1)-2=4(n-1)$. The
		forward implication is established in Lemma~\ref{lem:env}. For the converse, $\Phi:=D_HD_T$ is injective, hence a
		bijection on $\clb(\clk_\theta)$, so $\dim\Phi^{-1}(\clm)=\dim\clm=4(n-1)$ by
		Lemma~\ref{lem:env}. Since $\cls_\theta\subseteq\Phi^{-1}(\clm)$ and both have dimension $4(n-1)$,
		they coincide. This finishes the proof.
	\end{proof}
	
	\begin{remark}\label{rem:solvability}
		Theorem~\ref{thm:criterion} makes the criterion of \cite[Theorem 5.6]{BM} intrinsic. Whereas the
		latter characterizes membership in $\cls_\theta$ by the solvability of an auxiliary operator
		equation, Theorem~\ref{thm:criterion} imposes the single condition $D_HD_T(T)\in\clm$ on
		$T$ directly. For $\theta=z^{n}$ one has $\dim\cls_\theta=4(n-1)$, the dimension of the space of
		$n\times n$ Toeplitz-plus-Hankel matrices.
	\end{remark}
	
	\newsection{A commutator characterization and products}\label{sec:comm}
	
	The criterion of Theorem~\ref{thm:criterion} is of second order because it tests the operator $D_HD_T(X)$. In this section we show that $\cls_\theta$ admits a first-order description, through a single
	commutator with the help of a  self-adjoint operator
	\[
	\Sig:=S_\theta+S_\theta^{*}.
	\]
	
	We use the commutator characterizations of the two classes, which follow from
	\eqref{eq:TTOchar}--\eqref{eq:THOchar} on multiplying by $S_\theta$ and $S_\theta^{*}$ and using
	the identities \eqref{eq:s1}--\eqref{eq:s4} (see \cite[Lemmas 5.1--5.2]{BM}).
	
	\begin{lemma}[\cite{Sarason,BM}]\label{lem:Tcomm}
		For $A\in\clb(\clk_\theta)$ the following are equivalent:
		\begin{enumerate}
			\item $A\in\clt_\theta$;
			\item $S_\theta A-A S_\theta=x\otimes\wt{k}_0^\theta+k_0^\theta\otimes y$ for some $x,y\in\clk_\theta$;
			\item $S_\theta^{*}A-A S_\theta^{*}=u\otimes k_0^\theta+\wt{k}_0^\theta\otimes v$ for some $u,v\in\clk_\theta$.
		\end{enumerate}
	\end{lemma}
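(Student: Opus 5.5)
The plan is to prove the cycle (i)$\Rightarrow$(ii)$\Rightarrow$(i) and (i)$\Leftrightarrow$(iii) by multiplying Sarason's displacement identity \eqref{eq:TTOchar} on one side by $S_\theta$ or $S_\theta^*$, and then using \eqref{eq:s1}--\eqref{eq:s4} to turn the defect operators $I-S_\theta S_\theta^*$ and $I-S_\theta^*S_\theta$ into rank-one tensors.

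\emph{(i)$\Rightarrow$(ii).} Write $A-S_\theta AS_\theta^*=\psi\otimes k_0^\theta+k_0^\theta\otimes\chi$ and right-multiply by $S_\theta$. By \eqref{eq:s2}, $S_\theta^*S_\theta=I-\wt{k}_0^\theta\otimes\wt{k}_0^\theta$, so the left side becomes
\[
AS_\theta-S_\theta A+(S_\theta A\wt{k}_0^\theta)\otimes\wt{k}_0^\theta .
\]
On the right side, \eqref{eq:tensor} and \eqref{eq:s3} give $(\psi\otimes k_0^\theta)S_\theta=\psi\otimes S_\theta^*k_0^\theta=-\theta(0)\,\psi\otimes\wt{k}_0^\theta$, and $(k_0^\theta\otimes\chi)S_\theta=k_0^\theta\otimes S_\theta^*\chi$. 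Rearranging,
\[
S_\theta A-AS_\theta=x\otimes\wt{k}_0^\theta+k_0^\theta\otimes y
\]
with $x=S_\theta A\wt{k}_0^\theta+\theta(0)\psi$ and $y=-S_\theta^*\chi$.

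\emph{(i)$\Rightarrow$(iii).} Left-multiply the same identity by $S_\theta^*$ and use \eqref{eq:s2} in the form $S_\theta^*S_\theta=I-\wt{k}_0^\theta\otimes\wt{k}_0^\theta$. The left side becomes
\[
S_\theta^*A-AS_\theta^*+\wt{k}_0^\theta\otimes\bigl(S_\theta A^*\wt{k}_0^\theta\bigr).
\]
The right side becomes $(S_\theta^*\psi)\otimes k_0^\theta-\overline{\theta(0)}\,\wt{k}_0^\theta\otimes\chi$. Rearranging gives the form in (iii).

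\emph{(ii)$\Rightarrow$(i).} Right-multiply the relation in (ii) by $S_\theta^*$ to obtain
\[
S_\theta AS_\theta^*-AS_\theta S_\theta^*=x\otimes S_\theta\wt{k}_0^\theta+k_0^\theta\otimes S_\theta y .
\]
By \eqref{eq:s1}, $AS_\theta S_\theta^*=A-(Ak_0^\theta)\otimes k_0^\theta$, and by \eqref{eq:s4}, $x\otimes S_\theta\wt{k}_0^\theta=-\overline{\theta(0)}\,x\otimes k_0^\theta$. Hence
\[
A-S_\theta AS_\theta^*=\bigl(Ak_0^\theta+\overline{\theta(0)}\,x\bigr)\otimes k_0^\theta-k_0^\theta\otimes S_\theta y\in\clr_T,
\]
so $A\in\clt_\theta$ by \eqref{eq:TTOchar}.

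\emph{(iii)$\Rightarrow$(i).} Left-multiply the relation in (iii) by $S_\theta$. Then $S_\theta S_\theta^*=I-k_0^\theta\otimes k_0^\theta$ by \eqref{eq:s1}, and $S_\theta\wt{k}_0^\theta=-\theta(0)k_0^\theta$ by \eqref{eq:s4}. The left side is
\[
A-(k_0^\theta\otimes k_0^\theta)A-S_\theta AS_\theta^*=A-S_\theta AS_\theta^*-k_0^\theta\otimes A^*k_0^\theta .
\]
The right side is $(S_\theta u)\otimes k_0^\theta-\theta(0)\,k_0^\theta\otimes v$. Hence $D_T(A)\in\clr_T$, and (i) follows.

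All four implications are routine bookkeeping, and no single step is a real obstacle. The only care needed is with conjugate-linearity in the second slot, i.e.\ keeping track of $\theta(0)$ versus $\overline{\theta(0)}$ via \eqref{eq:tensor}, and with correctly placing each adjoint when moving $S_\theta$ or $S_\theta^*$ across a tensor.
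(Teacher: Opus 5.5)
Your proof is correct and follows the route the paper itself indicates: multiply Sarason's identity \eqref{eq:TTOchar}, or the commutator relations, on the appropriate side by $S_\theta$ or $S_\theta^{*}$, then convert the defect operators into rank-one tensors via \eqref{eq:s1}--\eqref{eq:s4}. I checked all four implications, including the $\theta(0)$ versus $\overline{\theta(0)}$ bookkeeping and the explicit vectors such as $x=S_\theta A\wt{k}_0^\theta+\theta(0)\psi$, $y=-S_\theta^{*}\chi$, and they are right.
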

	
	\begin{lemma}[\cite{GuMa,BM}]\label{lem:Hcomm}
		For $B\in\clb(\clk_\theta)$ the following are equivalent:
		\begin{enumerate}
			\item $B\in\clh_\theta$;
			\item $S_\theta B-B S_\theta^{*}=x\otimes k_0^\theta+k_0^\theta\otimes y$ for some $x,y\in\clk_\theta$;
			\item $S_\theta^{*}B-B S_\theta=u\otimes\wt{k}_0^\theta+\wt{k}_0^\theta\otimes v$ for some $u,v\in\clk_\theta$.
		\end{enumerate}
	\end{lemma}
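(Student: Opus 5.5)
The plan is to prove the cycle of implications (i)$\Rightarrow$(ii)$\Rightarrow$(iii)$\Rightarrow$(i). The displacement characterization \eqref{eq:THOchar} of Gu--Ma serves as the hub, and throughout we use the identities \eqref{eq:s1}--\eqref{eq:s4} together with the tensor rules \eqref{eq:tensor}.

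\emph{(i)$\Rightarrow$(ii).} Assume $B-S_\theta^*BS_\theta^*=f\otimes k_0^\theta+\wt k_0^\theta\otimes g$ and multiply on the left by $S_\theta$. By \eqref{eq:s1}, $S_\theta S_\theta^*=I-k_0^\theta\otimes k_0^\theta$, so
\[
S_\theta B-(I-k_0^\theta\otimes k_0^\theta)BS_\theta^*=(S_\theta f)\otimes k_0^\theta+(S_\theta\wt k_0^\theta)\otimes g .
\]
By \eqref{eq:s4}, $S_\theta\wt k_0^\theta=-\theta(0)k_0^\theta$. Also $(k_0^\theta\otimes k_0^\theta)BS_\theta^*=k_0^\theta\otimes(S_\theta B^*k_0^\theta)$. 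Hence
\[
S_\theta B-BS_\theta^*=(S_\theta f)\otimes k_0^\theta+k_0^\theta\otimes\bigl(-\overline{\theta(0)}g-S_\theta B^*k_0^\theta\bigr),
\]
which is the form required in (ii).

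\emph{(i)$\Rightarrow$(iii).} This direction is symmetric. Multiply the displacement identity on the right by $S_\theta$ and use \eqref{eq:s2}, i.e.\ $S_\theta^*S_\theta=I-\wt k_0^\theta\otimes\wt k_0^\theta$, together with $(f\otimes k_0^\theta)S_\theta=f\otimes S_\theta^*k_0^\theta=-\theta(0)\,f\otimes\wt k_0^\theta$ from \eqref{eq:s3}. This gives $B S_\theta-S_\theta^*B+S_\theta^*B\,\wt k_0^\theta\otimes\wt k_0^\theta\in\spn$ of operators of the form $u\otimes\wt k_0^\theta+\wt k_0^\theta\otimes v$.

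\emph{Converses.} Given (ii), multiply on the right by $S_\theta^*$. Since $S_\theta BS_\theta^*-BS_\theta^{*2}$ is not directly what we want, it is better to multiply on the left by $S_\theta^*$: this gives $S_\theta^*S_\theta B-S_\theta^*BS_\theta^*$. Replacing $S_\theta^*S_\theta$ by $I-\wt k_0^\theta\otimes\wt k_0^\theta$ yields
\[
B-S_\theta^*BS_\theta^*=\wt k_0^\theta\otimes(B^*\wt k_0^\theta)+(S_\theta^*x)\otimes k_0^\theta+(S_\theta^*k_0^\theta)\otimes y .
\]
Using \eqref{eq:s3}, the last term equals $-\overline{\theta(0)}\,\wt k_0^\theta\otimes y$, so the right-hand side lies in $\clr_H$ and $B\in\clh_\theta$ by \eqref{eq:THOchar}. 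Similarly, from (iii) multiply on the right by $S_\theta^*$ and use \eqref{eq:s1} and \eqref{eq:s4}. This produces
\[
S_\theta^*BS_\theta^*-B(I-k_0^\theta\otimes k_0^\theta)=u\otimes S_\theta\wt k_0^\theta+\wt k_0^\theta\otimes S_\theta v ,
\]
where $u\otimes S_\theta\wt k_0^\theta=-\overline{\theta(0)}\,u\otimes k_0^\theta$. Again $D_H(B)\in\clr_H$.

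\emph{Main obstacle.} The only real care needed is bookkeeping of conjugate-linearity in the second slot, i.e.\ the identity $A(p\otimes q)C=Ap\otimes C^*q$, so that each term lands in the correct slot with the correct adjoint. No boundedness issues arise, since every vector involved lies in $\clk_\theta$. Note that (ii)$\Rightarrow$(i) and (iii)$\Rightarrow$(i) each suffice on their own, so the equivalence of all three follows.
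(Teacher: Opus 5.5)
Your proof is correct and follows the route the paper indicates: the paper cites \cite[Lemmas 5.1--5.2]{BM} and says the characterization follows by multiplying \eqref{eq:THOchar} by $S_\theta$ or $S_\theta^{*}$ and using \eqref{eq:s1}--\eqref{eq:s4}, and all four of your computations check out, including the conjugate-linear bookkeeping such as $(S_\theta^{*}k_0^\theta)\otimes y=-\overline{\theta(0)}\,\wt{k}_0^\theta\otimes y$ and $u\otimes S_\theta\wt{k}_0^\theta=-\overline{\theta(0)}\,u\otimes k_0^\theta$. The only cosmetic point is that you announce a cycle (i)$\Rightarrow$(ii)$\Rightarrow$(iii)$\Rightarrow$(i) but actually prove (i)$\Leftrightarrow$(ii) and (i)$\Leftrightarrow$(iii) through the hub \eqref{eq:THOchar}, which establishes the equivalence equally well.
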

	
	The four right-hand sides occurring in Lemmas~\ref{lem:Tcomm} and \ref{lem:Hcomm} all lie in the
	single space
	\begin{equation}\label{eq:Rsigma}
		\clr_\Sigma:=\bigl\{f\otimes k_0^\theta+k_0^\theta\otimes g+u\otimes\wt{k}_0^\theta
		+\wt{k}_0^\theta\otimes v:f,g,u,v\in\clk_\theta\bigr\},
	\end{equation}
	the symmetrisation of $\clr_T$ and $\clr_H$. It admits a corner description analogous to
	Lemma~\ref{lem:corner}. Recall $E:=\spn\{k_0^\theta,\wt{k}_0^\theta\}$, which is of dimension two,
	Lemma~\ref{lem:cyclic}.
	
	\begin{lemma}\label{lem:Rsig}
		Let $Q$ be the orthogonal projection of $\clk_\theta$ onto $E^{\perp}$. Then, for
		$X\in\clb(\clk_\theta)$, 
		\[
		\ X\in\clr_\Sigma\iff QXQ=0.
		\]
		If $\dim\clk_\theta=n<\infty$, then
		$\dim\clr_\Sigma=4(n-1)$.
	\end{lemma}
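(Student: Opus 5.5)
The proof follows the pattern of Lemma~\ref{lem:corner}. Write $P_E=I-Q$ for the orthogonal projection onto $E$. For the forward implication, take $X=f\otimes k_0^\theta+k_0^\theta\otimes g+u\otimes\wt{k}_0^\theta+\wt{k}_0^\theta\otimes v$ and use \eqref{eq:tensor}:
\[
QXQ=(Qf)\otimes(Qk_0^\theta)+(Qk_0^\theta)\otimes(Qg)+(Qu)\otimes(Q\wt{k}_0^\theta)+(Q\wt{k}_0^\theta)\otimes(Qv).
\]
Every term contains a factor $Qk_0^\theta=0$ or $Q\wt{k}_0^\theta=0$, since $k_0^\theta,\wt{k}_0^\theta\in E=\ker Q$. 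Hence $QXQ=0$.

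For the converse, assume $QXQ=0$ and decompose
\[
X=XP_E+P_EXQ+QXQ=XP_E+P_EXQ.
\]
Both remaining pieces lie in $\clr_\Sigma$.

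\emph{The piece $XP_E$.} The vectors $k_0^\theta,\wt{k}_0^\theta$ are linearly independent by Lemma~\ref{lem:cyclic}, so they form a (non-orthogonal) basis of $E$. Choose the dual basis $e_1,e_2\in E$ with $\ip{k_0^\theta}{e_1}=1$, $\ip{\wt{k}_0^\theta}{e_1}=0$, and symmetrically for $e_2$. Then $P_E=k_0^\theta\otimes e_1+\wt{k}_0^\theta\otimes e_2$, because both sides agree on $E$ and vanish on $E^\perp$. Therefore
\[
XP_E=(Xk_0^\theta)\otimes e_1+(X\wt{k}_0^\theta)\otimes e_2 .
\]
Each $e_i$ is a combination of $k_0^\theta$ and $\wt{k}_0^\theta$, and conjugate-linearity in the second slot lets us rewrite this as $f\otimes k_0^\theta+u\otimes\wt{k}_0^\theta$.

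\emph{The piece $P_EXQ$.} Apply the same argument to $P_E$ acting on the left: $P_EXQ=k_0^\theta\otimes g+\wt{k}_0^\theta\otimes v$ with $g=Q X^*e_1$ and $v=QX^*e_2$.

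Adding the two pieces gives $X\in\clr_\Sigma$.

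For the dimension count when $n<\infty$ and $n\ge2$, use the characterization just proved: $\clr_\Sigma$ is the kernel of the linear map $X\mapsto QXQ$. This map sends $\clb(\clk_\theta)$ onto $\clb(E^\perp)$; it is surjective because any $Y\in\clb(E^\perp)$ is $QYQ$ when $Y$ is extended by zero. Since $\dim E^\perp=n-2$,
\[
\dim\clr_\Sigma=n^2-(n-2)^2=4(n-1).
\]
There is no real obstacle in this argument. The only point needing care is that $\{k_0^\theta,\wt{k}_0^\theta\}$ is not orthogonal, which is why the dual basis is used. The edge case $n=2$ is consistent: there $E^\perp=0$, so $\clr_\Sigma=\clb(\clk_\theta)$ has dimension $4$.
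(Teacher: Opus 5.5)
Your proof is correct and follows essentially the same route as the paper: the forward direction kills each term via $Qk_0^\theta=Q\wt{k}_0^\theta=0$, the converse splits $X=X(I-Q)+XQ$ with $\ran XQ\subseteq E$ and expands both pieces in a basis of $E$, and the dimension is read off from the surjection $X\mapsto QXQ$ onto $\clb(E^{\perp})$. The only difference is cosmetic. You use the dual basis of $\{k_0^\theta,\wt{k}_0^\theta\}$, whereas the paper uses an orthonormal basis of $E$ and then re-expresses it. Your explicit restriction to $n\ge2$ for the dimension formula is a welcome bit of care, since $E$ is two-dimensional only then.
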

	
	\begin{proof}
		If 
		\[
		X=f\otimes k_0^\theta+k_0^\theta\otimes g+u\otimes\wt{k}_0^\theta+\wt{k}_0^\theta\otimes v,
		\]
		then by \eqref{eq:tensor}, each of the four terms of $QXQ$ contains a factor $Qk_0^\theta$ or 	$Q\wt{k}_0^\theta$, so $QXQ=0$.

		Let $\{e_1,e_2\}$ be an
		orthonormal basis of $E$, so $I-Q=e_1\otimes e_1+e_2\otimes e_2$. Hence $XQ=e_1\otimes y_1+e_2\otimes y_2$ for some $y_1,y_2 \in \clk_\theta$, while
		$X(I-Q)=(Xe_1)\otimes e_1+(Xe_2)\otimes e_2$. Adding and re-expressing $e_1,e_2$ in terms of
		$k_0^\theta,\wt{k}_0^\theta$ exhibits $X\in\clr_\Sigma$. For the dimension, note that the map
		$X\mapsto QXQ$ from $\clb(\clk_\theta)$ onto $\clb(E^{\perp})$ is surjective with kernel
		$\clr_\Sigma$, so $\dim\clr_\Sigma=n^{2}-(n-2)^{2}=4(n-1)$.
	\end{proof}
	
	Note that $\dim\clr_\Sigma=4(n-1)=\dim\cls_\theta$ by Theorem~\ref{thm:criterion}, a first indication
	that $\clr_\Sigma$ is the right target space.
	
	\begin{proposition}\label{prop:necessity}
		Let $\theta$ be any inner function and $X\in\cls_\theta$. Then $\Sig X-X\Sig\in\clr_\Sigma$.
	\end{proposition}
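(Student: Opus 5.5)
By linearity of the commutator $X\mapsto \Sig X-X\Sig$ and the fact that $\clr_\Sigma$ is a linear subspace, it suffices to treat $X=A\in\clt_\theta$ and $X=B\in\clh_\theta$ separately. The plan is to split the commutator with $\Sig=S_\theta+S_\theta^*$ into two first-order commutators. Each piece is then controlled by one of the equivalent conditions in Lemma~\ref{lem:Tcomm} or Lemma~\ref{lem:Hcomm}.

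For $A\in\clt_\theta$, write
\[
\Sig A-A\Sig=(S_\theta A-AS_\theta)+(S_\theta^*A-AS_\theta^*).
\]
By Lemma~\ref{lem:Tcomm}(ii) the first bracket equals $x\otimes\wt{k}_0^\theta+k_0^\theta\otimes y$. By Lemma~\ref{lem:Tcomm}(iii) the second equals $u\otimes k_0^\theta+\wt{k}_0^\theta\otimes v$. Both forms visibly belong to $\clr_\Sigma$ as defined in \eqref{eq:Rsigma}, so their sum does too.

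For $B\in\clh_\theta$ the natural grouping is different, because Lemma~\ref{lem:Hcomm} controls the mixed commutators $S_\theta B-BS_\theta^*$ and $S_\theta^*B-BS_\theta$. Regrouping gives
\[
\Sig B-B\Sig=(S_\theta B-BS_\theta^*)+(S_\theta^*B-BS_\theta).
\]
This identity holds because both sides expand to $S_\theta B+S_\theta^*B-BS_\theta-BS_\theta^*$. By Lemma~\ref{lem:Hcomm}(ii) the first bracket is $x\otimes k_0^\theta+k_0^\theta\otimes y$. By Lemma~\ref{lem:Hcomm}(iii) the second is $u\otimes\wt{k}_0^\theta+\wt{k}_0^\theta\otimes v$. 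Both again lie in $\clr_\Sigma$.

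Adding the two cases shows that $\Sig X-X\Sig\in\clr_\Sigma$ for every $X=A+B\in\cls_\theta$. No dimension hypothesis on $\theta$ is needed, since Lemmas~\ref{lem:Tcomm} and \ref{lem:Hcomm} hold for arbitrary inner $\theta$. The only step requiring any thought is noticing the regrouping for the Hankel part; there is no genuine obstacle beyond it.
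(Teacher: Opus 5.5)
Your proof is correct and is essentially the paper's own argument. You split $X$ into Toeplitz and Hankel parts, write $\Sig A-A\Sig=(S_\theta A-AS_\theta)+(S_\theta^*A-AS_\theta^*)$ and apply Lemma~\ref{lem:Tcomm}(ii),(iii), then regroup $\Sig B-B\Sig=(S_\theta B-BS_\theta^*)+(S_\theta^*B-BS_\theta)$ and apply Lemma~\ref{lem:Hcomm}(ii),(iii), with each bracket lying in the linear subspace $\clr_\Sigma$, exactly as the paper does.
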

	
	\begin{proof}
		By linearity, it suffices to treat $X=A\in\clt_\theta$ and $X=B\in\clh_\theta$ separately. For
		$A\in\clt_\theta$,
		\[
		\Sig A-A\Sig=\bigl(S_\theta A-A S_\theta\bigr)+\bigl(S_\theta^{*}A-A S_\theta^{*}\bigr),
		\]
		and the two brackets are in $\clr_\Sigma$ by Lemma~\ref{lem:Tcomm}(ii),(iii). For
		$B\in\clh_\theta$ the same expansion can be written as 
		\[
		\Sig B-B\Sig=\bigl(S_\theta B-B S_\theta^{*}\bigr)+\bigl(S_\theta^{*}B-B S_\theta\bigr),
		\]
		and the two bracket terms  are in $\clr_\Sigma$ by Lemma~\ref{lem:Hcomm}(ii),(iii). Since
		$\clr_\Sigma$ is a linear subspace, the proposition follows.
	\end{proof}

	The natural question is to check whether the converse of the above proposition is true or not. We give an affirmative answer to it in the case of a finite Blaschke product $\theta$. The following theorem also provides yet another characterization of elements of $\cls_\theta$ in that case:

	\begin{theorem}\label{thm:comm}
		Let $\theta$ be a finite Blaschke product of degree $n\ge2$ and $X\in\clb(\clk_\theta)$. Then
		\[
		X\in\cls_\theta\iff\Sig X-X\Sig\in\clr_\Sigma\iff Q\bigl(\Sig X-X\Sig\bigr)Q=0,
		\]
		where $Q$ is the orthogonal projection onto $E^{\perp}$.
	\end{theorem}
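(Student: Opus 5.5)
The second equivalence is immediate from Lemma~\ref{lem:Rsig}, and the implication $X\in\cls_\theta\Rightarrow \Sig X-X\Sig\in\clr_\Sigma$ is Proposition~\ref{prop:necessity}. So it remains to prove the converse. I would do this by a dimension count, in the spirit of Theorem~\ref{thm:criterion}.

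Let $\delta(X)=\Sig X-X\Sig$, a linear map on the finite-dimensional space $\clb(\clk_\theta)$. Then
\[
\dim\delta^{-1}(\clr_\Sigma)=\dim\ker\delta+\dim\bigl(\clr_\Sigma\cap\ran\delta\bigr).
\]
We already have $\cls_\theta\subseteq\delta^{-1}(\clr_\Sigma)$ and $\dim\cls_\theta=4(n-1)$ by Theorem~\ref{thm:criterion}. Hence it suffices to show $\dim\delta^{-1}(\clr_\Sigma)\le 4(n-1)$.

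Use the trace pairing $\ip{Y}{Z}=\operatorname{tr}(YZ^{*})$. Since $\Sig$ is self-adjoint, $\delta$ is skew-adjoint for this pairing, so $\ran\delta=(\ker\delta)^{\perp}$. Here $\ker\delta=\{\Sig\}'$ is the commutant of $\Sig$. The key claim is that $k_0^\theta$ is a cyclic vector for $\Sig$. Granting it, $\Sig$ has simple spectrum, so $\{\Sig\}'=\{p(\Sig):p\ \text{polynomial}\}$ has dimension exactly $n$.

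Next I compute $\dim(\clr_\Sigma\cap(\ker\delta)^{\perp})$. By Lemma~\ref{lem:Rsig}, $\clr_\Sigma=\{X:QXQ=0\}$. Its orthogonal complement in the trace pairing is $\{Z:Z=QZQ\}$, i.e. operators vanishing on $E$ with range in $E^{\perp}$. If some $p(\Sig)\ne0$ were orthogonal to $\clr_\Sigma$, then $p(\Sig)k_0^\theta=0$, and cyclicity of $k_0^\theta$ would force $p(\Sig)=0$. Therefore $\{\Sig\}'\cap\clr_\Sigma^{\perp}=\{0\}$. Consequently the $n$ linear functionals $Y\mapsto\operatorname{tr}(Y\,p(\Sig)^{*})$ restricted to $\clr_\Sigma$ are independent, which gives
\[
\dim\bigl(\clr_\Sigma\cap\ran\delta\bigr)=4(n-1)-n.
\]
Adding $\dim\ker\delta=n$ yields $\dim\delta^{-1}(\clr_\Sigma)=4(n-1)$. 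Hence $\delta^{-1}(\clr_\Sigma)=\cls_\theta$.

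The main obstacle is the cyclicity of $k_0^\theta$ for $\Sig$. Since $\Sig$ is self-adjoint, this is equivalent to the statement that no eigenvector of $\Sig$ is orthogonal to $k_0^\theta$. My plan for this step is as follows.
\begin{itemize}
\item Suppose $\Sig v=\lambda v$ with $\ip{v}{k_0^\theta}=v(0)=0$.
\item Then $S_\theta^{*}v=\overline{z}v$ exactly, since $v(0)=0$. Using \eqref{eq:s1}, one gets $S_\theta S_\theta^{*}v=v$.
\item The eigen-relation then becomes a three-term recursion of the form $S_\theta^{*}(S_\theta^{*}v)=\lambda S_\theta^{*}v-v$. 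Iterating it, I would try to show that all the vectors $(S_\theta^{*})^{m}v$ stay in a two-dimensional space while each is orthogonal to $k_0^\theta$ as long as it vanishes at $0$.
\item From this one should obtain either $v=0$ (using that $S_\theta^{*}$ is nilpotent-like, i.e. of class $C_{00}$, in finite dimension), or a nonzero $S_\theta^{*}$-invariant subspace orthogonal to $k_0^\theta$. The latter contradicts the cyclicity of $k_0^\theta$ for $S_\theta$ \cite[Lemma 2.3]{Sarason}.
\end{itemize}
If the recursion becomes messy, my fallback is to first reduce by a Crofoot transform to the case $\theta(0)=0$, where the recursion is cleanest. This should be checked to be compatible with $\Sig$, since Crofoot conjugation does not obviously preserve $S_\theta+S_\theta^{*}$; if it is not, I would handle the general $\theta(0)$ directly via \eqref{eq:s3}--\eqref{eq:s4}.
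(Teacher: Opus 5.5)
Your dimension-count framework is sound, and it is a genuinely different route from the paper's. The paper proves sufficiency by a purely algebraic identity. It multiplies $\Sig X-X\Sig$ on the right by $S_\theta^{*}$, uses \eqref{eq:s1}, \eqref{eq:s2}, \eqref{eq:s4} to show $D_HD_T(X)\in\clm$, and then invokes Theorem~\ref{thm:criterion}. That identity holds for every inner $\theta$, which is what Proposition~\ref{prop:reduction} reuses, and it needs no spectral information about $\Sig$.

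Your argument bypasses $D_HD_T$ and $\clm$ and reduces everything to $\{\Sig\}'\cap\clr_\Sigma^{\perp}=\{0\}$. The individual steps are correct: $\ran\delta=(\ker\delta)^{\perp}$, $\clr_\Sigma^{\perp}=\{Z:Z=QZQ\}$, and the count $(4(n-1)-n)+n$. As a byproduct you would get the simple spectrum of $\Sig$ from Theorem~\ref{thm:commutant}. One small slip: since $\Sig^{*}=\Sig$, the map $\delta$ is self-adjoint, not skew-adjoint, for the trace pairing. This does not affect $\ran\delta=(\ker\delta)^{\perp}$.

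\textbf{The gap.} It lies in the step you flag yourself: your plan for the cyclicity of $k_0^\theta$ rests on a false recursion. From $\ip{v}{k_0^\theta}=0$ you correctly get $S_\theta S_\theta^{*}v=v$. But to turn $S_\theta v+S_\theta^{*}v=\lambda v$ into a recursion in $S_\theta^{*}$ you must apply $S_\theta^{*}$, and then $S_\theta^{*}S_\theta v$ appears. By \eqref{eq:s2} this equals $v-\ip{v}{\wt{k}_0^\theta}\wt{k}_0^\theta$, so the true relation is
\[(S_\theta^{*})^{2}v=\lambda S_\theta^{*}v-v+\ip{v}{\wt{k}_0^\theta}\,\wt{k}_0^\theta .\]
The extra term is exactly where the difficulty sits.
\begin{itemize}
\item Without it, $\spn\{v,S_\theta^{*}v\}$ would be $S_\theta^{*}$-invariant with $S_\theta^{*}$ annihilated by $x^{2}-\lambda x+1$. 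For real $|\lambda|\le2$ the roots are unimodular, contradicting $\sigma(S_\theta^{*})\subset\D$, so $v=0$ at once.
\item With it, the orbit of $v$ picks up $\wt{k}_0^\theta, S_\theta^{*}\wt{k}_0^\theta,\dots$ and need not stay two-dimensional.
\end{itemize}
The Crofoot fallback does not help either, since $J_wS_\theta J_w^{-1}\neq S_{\theta_w}$ in general.

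\textbf{The fix.} The claim itself is true. It follows from the eigen-equation in the proof of Theorem~\ref{thm:commutant}, whose spectral part does not use Theorem~\ref{thm:comm}.
\begin{itemize}
\item Using $S_\theta^{*}f=\overline{z}(f-f(0))$ and Lemma~\ref{lem:auxproj}, the relation $\Sig f=\lambda f$ gives $(z^{2}-\lambda z+1)f=\ip{f}{\wt{k}_0^\theta}\,z\theta+f(0)$.
\item The roots of $z^{2}-\lambda z+1$ lie on $\T$. Moreover $f$ and $\theta$ are rational with no poles on $\overline{\D}$. So evaluating at a root $\zeta$ gives $\ip{f}{\wt{k}_0^\theta}\,\zeta\theta(\zeta)+f(0)=0$.
\item If $f(0)=\ip{f}{k_0^\theta}=0$, then $|\zeta\theta(\zeta)|=1$ forces $\ip{f}{\wt{k}_0^\theta}=0$. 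Hence $(z^{2}-\lambda z+1)f\equiv0$ and $f=0$.
\end{itemize}
Thus no eigenvector of $\Sig$ is orthogonal to $k_0^\theta$. Since $\Sig$ is self-adjoint, this is precisely the simple spectrum and cyclicity you need, and your count then closes.
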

	
	\begin{proof}
		The second equivalence is Lemma~\ref{lem:Rsig} and the necessity of the first equivalence is
		Proposition~\ref{prop:necessity}. It remains to prove the sufficiency. Suppose
		\[
		\Sig X-X\Sig=f\otimes k_0^\theta+k_0^\theta\otimes g+u\otimes\wt{k}_0^\theta+\wt{k}_0^\theta\otimes v.
		\]
		Multiply on the right by $S_\theta^{*}$  and using \eqref{eq:s4},
		\begin{equation}\label{eq:multS}
			\bigl(\Sig X-X\Sig\bigr)S_\theta^{*}
			=f\otimes\bigl(S_\theta k_0^\theta\bigr)+k_0^\theta\otimes\bigl(S_\theta g\bigr)
			-\overline{\theta(0)}\,\bigl(u\otimes k_0^\theta\bigr)+\wt{k}_0^\theta\otimes\bigl(S_\theta v\bigr).
		\end{equation}
		On the left, expanding $\Sig =S_\theta +S_\theta^*$ and  using \eqref{eq:s1},
		\[
		\bigl(\Sig X-X\Sig\bigr)S_\theta^{*}
		=-\bigl(X-S_\theta X S_\theta^{*}\bigr)-\bigl(X(S_\theta^{*})^{2}-S_\theta^{*}X S_\theta^{*}\bigr)
		+\bigl(Xk_0^\theta\bigr)\otimes k_0^\theta.
		\]
		Comparing with \eqref{eq:multS} and collecting terms, there exist $f_1,g_1,u_1,v_1\in\clk_\theta$
		with
		\begin{equation}\label{eq:mid}
			\bigl(X-S_\theta X S_\theta^{*}\bigr)+\bigl(X(S_\theta^{*})^{2}-S_\theta^{*}X S_\theta^{*}\bigr)
			=f_1\otimes\bigl(S_\theta k_0^\theta\bigr)+k_0^\theta\otimes g_1+u_1\otimes k_0^\theta
			+\wt{k}_0^\theta\otimes v_1.
		\end{equation}
		On the other hand, equation
		\eqref{eq:s2} yields
		\[
		S_\theta^{*}\bigl(X-S_\theta X S_\theta^{*}\bigr)S_\theta^{*}
		=S_\theta^{*}X S_\theta^{*}-X(S_\theta^{*})^{2}+\wt{k}_0^\theta\otimes\bigl(S_\theta^{2}X^{*}\wt{k}_0^\theta\bigr),
		\]
		whence
		\[
		D_HD_T(X)=\bigl(X-S_\theta X S_\theta^{*}\bigr)+\bigl(X(S_\theta^{*})^{2}-S_\theta^{*}X S_\theta^{*}\bigr)
		-\wt{k}_0^\theta\otimes\bigl(S_\theta^{2}X^{*}\wt{k}_0^\theta\bigr).
		\]
		Substituting \eqref{eq:mid},
		\[
		D_HD_T(X)=f_1\otimes\bigl(S_\theta k_0^\theta\bigr)+k_0^\theta\otimes g_1+u_1\otimes k_0^\theta
		+\wt{k}_0^\theta\otimes\bigl(v_1-S_\theta^{2}X^{*}\wt{k}_0^\theta\bigr),
		\]
		which lies in $\clm$. By Theorem~\ref{thm:criterion}, $X\in\cls_\theta$.
	\end{proof}
	
	\begin{remark}\label{rem:trivial}
		For $n=2$ the statement is true but empty, since there $\dim\clr_\Sigma=4(n-1)=4=n^{2}$, so
		$\clr_\Sigma=\clb(\clk_\theta)$ and the condition holds for every $X$; correspondingly
		$\dim\cls_\theta=4=n^{2}$, so $\cls_\theta=\clb(\clk_\theta)$. The content of
		Theorem~\ref{thm:comm} is therefore in the range $n\ge3$.
	\end{remark}

	\begin{remark}
		It is clear from Theorem \ref{thm:comm} that if a bounded operator on a finite-dimensional model space  $\clk_\theta$ commutes with $\Sig$, then it must be in $\cls_\theta$. This leads us to the following complete description of the operators in $\cls_\theta$ commuting with $\Sig$. We will use the following lemma in our next result.
	\end{remark}
	
	\begin{lemma}[{\cite[Lemma 3.4]{GuMa}}]\label{lem:auxproj}
		For every inner function $\theta$, $\ (P_{+}-P_{\clk_\theta})SP_{\clk_\theta}=\theta\otimes\wt{k}_0^\theta$.
	\end{lemma}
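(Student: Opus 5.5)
The plan is to compute both sides on an arbitrary $g\in H^2$, so that the identity is read as an equality of operators on $H^2$; here $S$ is the unilateral shift $Sg=zg$ and $\theta\otimes\wt{k}_0^\theta$ is the rank-one operator $g\mapsto\ip{g}{\wt{k}_0^\theta}\theta$. Since $\clk_\theta\subseteq H^2$, we have $P_+-P_{\clk_\theta}=P_{\theta H^2}$ on $H^2$. As in the proof of Lemma~\ref{lem:solve}, we also have $P_{\theta H^2}=M_\theta P_+M_{\overline{\theta}}$ on $H^2$. Hence, writing $f=P_{\clk_\theta}g\in\clk_\theta$, the left-hand side applied to $g$ equals
\[
P_{\theta H^2}(zf)=\theta\,P_+(\overline{\theta}zf).
\]

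First I use the observation already made in Lemma~\ref{lem:solve}. Because $f\perp\theta H^2$, the function $\overline{\theta}f$ has only strictly negative Fourier coefficients. Multiplying by $z$ raises every frequency by one, so $\overline{\theta}zf$ has no positive frequencies, and $P_+(\overline{\theta}zf)=\gamma$ is a constant. This constant is the zeroth Fourier coefficient of $\overline{\theta}zf$:
\[
\gamma=\ip{\overline{\theta}zf}{1}_{L^2}=\ip{f}{\overline{z}\theta}_{L^2}.
\]

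Next I identify $\gamma$ with $\ip{f}{\wt{k}_0^\theta}$. Since $f\in H^2$ and $\overline{z}\in H^2_-$, we have $\ip{f}{\overline{z}}=0$, so
\[
\gamma=\ip{f}{\overline{z}(\theta-\theta(0))}.
\]
On $\T$ the function $\overline{z}(\theta-\theta(0))$ coincides with $(\theta(z)-\theta(0))/z=\wt{k}_0^\theta$. Hence $\gamma=\ip{f}{\wt{k}_0^\theta}$, and the left-hand side applied to $g$ is $\ip{P_{\clk_\theta}g}{\wt{k}_0^\theta}\theta$.

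Finally, since $\wt{k}_0^\theta\in\clk_\theta$, we have $\ip{P_{\clk_\theta}g}{\wt{k}_0^\theta}=\ip{g}{\wt{k}_0^\theta}$, which gives $(\theta\otimes\wt{k}_0^\theta)g$. The only step needing care is the evaluation of the constant $\gamma$, namely the trick of inserting $-\theta(0)$, which is harmless because $\overline{z}\perp H^2$. Everything else is bookkeeping of projections.
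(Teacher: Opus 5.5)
Your argument is correct and complete, including reading $\theta\otimes\wt{k}_0^\theta$ as an operator on $H^2$, which is necessary since $\theta\notin\clk_\theta$. The paper gives no proof of its own and simply cites \cite[Lemma 3.4]{GuMa}; your computation is exactly the one begun in the proof of Lemma~\ref{lem:solve} via $P_{\theta H^2}=M_\theta P_+M_{\overline{\theta}}$, completed by identifying the constant $\gamma$ as $\ip{f}{\wt{k}_0^\theta}$, which yields the formula $S_\theta f=zf-\ip{f}{\wt{k}_0^\theta}\theta$ that the paper later uses in Theorem~\ref{thm:commutant}.
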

	
	\begin{theorem}\label{thm:commutant}
		Let $\theta$ be a finite Blaschke product of degree $n\ge2$. Then $\{\Sig\}'\subseteq\cls_\theta$:
		every bounded operator on $\clk_\theta$ commuting with $\Sig=S_\theta+S_\theta^{*}$ is a truncated
		Toeplitz-plus-Hankel operator. Moreover $\Sig$ has $n$ simple eigenvalues, all in $(-2,2)$, and
		\[
		\{X\in\cls_\theta:\Sig X=X\Sig\}=\{\Sig\}'=\bigl\{p(\Sig):p\in\C[z]\bigr\}=\spn\{I,\Sig,\dots,\Sig^{\,n-1}\},
		\]
		a commutative subalgebra of $\cls_\theta$ of dimension $n$.
	\end{theorem}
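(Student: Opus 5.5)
The inclusion $\{\Sig\}'\subseteq\cls_\theta$ is immediate from Theorem~\ref{thm:comm}. If $\Sig X=X\Sig$, then $\Sig X-X\Sig=0\in\clr_\Sigma$, and hence $X\in\cls_\theta$. This also shows that $\{X\in\cls_\theta:\Sig X=X\Sig\}=\{\Sig\}'$. The real content of the theorem is therefore spectral: I will show that the self-adjoint operator $\Sig$ has $n$ \emph{simple} eigenvalues in $(-2,2)$. Once this is known, the commutant of a self-adjoint matrix with simple spectrum is the algebra of polynomials in it. Concretely, diagonalize $\Sig$ in an orthonormal eigenbasis. Any $X$ commuting with $\Sig$ preserves each one-dimensional eigenspace and so is diagonal in that basis. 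By Lagrange interpolation at the $n$ distinct eigenvalues, $X=p(\Sig)$ with $\deg p\le n-1$. Conversely, polynomials in $\Sig$ clearly commute with $\Sig$. Since the minimal polynomial of $\Sig$ has degree $n$, the operators $I,\Sig,\dots,\Sig^{n-1}$ are linearly independent. This gives dimension $n$ and commutativity.

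For the bound on the eigenvalues, $\|\Sig\|\le 2\|S_\theta\|\le2$ gives $\sigma(\Sig)\subseteq[-2,2]$, and self-adjointness makes the eigenvalues real. Suppose $\Sig h=\pm2h$ with $\|h\|=1$. Then
\[
\pm 2=\ip{\Sig h}{h}=2\,\mathrm{Re}\ip{S_\theta h}{h}.
\]
This forces $|\ip{S_\theta h}{h}|=1$, hence $S_\theta h=\pm h$ by equality in Cauchy--Schwarz. That is impossible, because $\sigma(S_\theta)$ is the zero set of $\theta$, which lies in $\D$ by \eqref{eq:specthm}. Alternatively, $I\mp S_\theta$ is injective, as in Lemma~\ref{lem:V1}(i).

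The main obstacle is simplicity of the eigenvalues. My plan is to show that $k_0^\theta$ is a cyclic vector for $\Sig$. A self-adjoint operator on an $n$-dimensional space with a cyclic vector has all its eigenspaces one-dimensional: if some eigenspace had dimension at least $2$, it would contain a unit vector orthogonal to the cyclic vector, and that vector would then be orthogonal to every $\Sig^j k_0^\theta$, a contradiction.

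To prove cyclicity, I will use Lemma~\ref{lem:auxproj} to control the products $\Sig^j k_0^\theta$. The idea is a triangularity argument. I would show by induction that
\[
\Sig^j k_0^\theta\equiv S_\theta^j k_0^\theta \pmod{\spn\{S_\theta^i k_0^\theta,\ i<j\}}.
\]
Each $S_\theta^*$ applied to $S_\theta^i k_0^\theta$ should lower the degree, modulo the span. Indeed, $S_\theta^*S_\theta=I-\wt{k}_0^\theta\otimes\wt{k}_0^\theta$ by \eqref{eq:s2}, so $S_\theta^*S_\theta^i k_0^\theta=S_\theta^{i-1}k_0^\theta-\ip{S_\theta^{i-1}k_0^\theta}{\wt{k}_0^\theta}\wt{k}_0^\theta$. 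By \eqref{eq:s3}, $\wt{k}_0^\theta$ is a multiple of $S_\theta^*k_0^\theta$, so what has to be shown is that $\wt{k}_0^\theta$ lies in the low-degree span, ideally in $\spn\{k_0^\theta\}$ modulo lower terms. That is the delicate point, since $\wt{k}_0^\theta$ need not lie in $\spn\{k_0^\theta\}$.

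A cleaner route, which I will try first, is to work with polynomial representatives. For $\theta=z^n$, $\Sig$ is the tridiagonal Jacobi matrix with off-diagonal entries $1$, which has simple spectrum. For a general finite Blaschke product, I would pass through a Crofoot transform, or use the Clark unitary model. Failing that, I would use the general fact that a self-adjoint tridiagonal (Jacobi) matrix with nonzero off-diagonal entries has simple spectrum, after finding an orthonormal basis in which $\Sig$ is tridiagonal. The Gram--Schmidt process applied to the Krylov sequence $k_0^\theta,\Sig k_0^\theta,\dots$ produces such a basis precisely when the sequence spans, so cyclicity is the crux either way. Should the Krylov argument stall, a fallback is a direct eigenvector computation. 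If $\Sig h=\lambda h$, I would express $h$ through the formula $S_\theta h=zh-\gamma\theta$ from the proof of Lemma~\ref{lem:solve}, together with the analogous formula for $S_\theta^*$. This turns the eigen-equation into a functional equation in $h$ with constants determined by finitely many values. Showing that its solution space is one-dimensional then gives simplicity.
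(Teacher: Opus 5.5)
\textbf{There is a genuine gap: the simplicity of the eigenvalues of $\Sig$ is never proved, and it is the real content of the theorem.} The other parts are correct and match the paper: the inclusion $\{\Sig\}'\subseteq\cls_\theta$ via Theorem~\ref{thm:comm}, the Cauchy--Schwarz argument excluding $\pm2$, and the step from simple spectrum to $\{\Sig\}'=\spn\{I,\Sig,\dots,\Sig^{\,n-1}\}$ of dimension $n$.

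Your primary route does not work as stated. The congruence $\Sig^j k_0^\theta\equiv S_\theta^j k_0^\theta$ modulo $\spn\{S_\theta^i k_0^\theta:i<j\}$ is already false at $j=1$ whenever $\theta(0)\neq0$. By \eqref{eq:s3}, $\Sig k_0^\theta-S_\theta k_0^\theta=-\overline{\theta(0)}\,\wt{k}_0^\theta$, and $\wt{k}_0^\theta\notin\C k_0^\theta$ by Lemma~\ref{lem:cyclic}. The Jacobi-matrix route presupposes the cyclicity you are trying to prove. The Crofoot transform cannot reduce you to $\theta=z^n$: it does not in general carry $S_\theta$ to $S_{\theta_w}$, and $\theta_w$ is a unimodular multiple of $z^n$ only when $\theta$ is a M\"obius image of $z^n$.

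Your fallback is exactly the paper's route, but you stop before the decisive step. For $f\in\clk_\theta$ one has $S_\theta^*f=\overline{z}\,(f-f(0))$, and by Lemma~\ref{lem:auxproj}, $S_\theta f=zf-\ip{f}{\wt{k}_0^\theta}\theta$. So $\Sig f=\lambda f$ becomes
\[
(z^2-\lambda z+1)f(z)=c\,z\theta(z)+d,\qquad c=\ip{f}{\wt{k}_0^\theta},\quad d=f(0).
\]
Since $\lambda\in(-2,2)$, the roots $\zeta,\overline{\zeta}$ of $z^2-\lambda z+1$ lie on $\T$. Both $f$ and $\theta$ are rational with poles off $\overline{\D}$, so evaluating at $z=\zeta$ gives $c\,\zeta\theta(\zeta)+d=0$. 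Hence $f=c\,\bigl(z\theta-\zeta\theta(\zeta)\bigr)/(z^2-\lambda z+1)$ is determined by the single scalar $c$, and every eigenspace is one-dimensional.

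The same identity gives $d=0\Rightarrow c=0\Rightarrow f=0$. So every eigenvector satisfies $\ip{f}{k_0^\theta}=f(0)\neq0$, which is precisely the cyclicity of $k_0^\theta$ you wanted. However, that cyclicity comes out of this boundary-evaluation computation; it does not replace it. Without this step or an equivalent argument, your proof of simplicity, and hence of $\{\Sig\}'=\{p(\Sig):p\in\C[z]\}$ having dimension $n$, is incomplete.
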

	
	\begin{proof}
		If $X$ commutes with $\Sig$ then $\Sig X-X\Sig=0\in\clr_\Sigma$, so $X\in\cls_\theta$ by
		Theorem~\ref{thm:comm}, $\{\Sig\}'\subseteq\cls_\theta$.
		
		For the spectrum, let $\Sig f=\lambda f$ with $f\in\clk_\theta\setminus\{0\}$. If $\lambda=\pm2$ then
		$\operatorname{Re}\ip{S_\theta f}{f}=\pm\norm{f}^{2}$ which, by Cauchy--Schwarz, forces $S_\theta f=\pm f$.
		This is impossible since $S_\theta$ is of class $C_{00}$, so $\lambda\in(-2,2)$. For $f\in\clk_\theta$ one
		has $S_\theta^{*}f=z^{-1}\bigl(f-f(0)\bigr)$ and by Lemma~\ref{lem:auxproj},
		\[
		S_\theta f=zf-\ip{f}{\wt{k}_0^\theta}\theta.
		\]
		Adding these and multiplying by $z$,
		\[
		\bigl(z^{2}-\lambda z+1\bigr)f(z)=\ip{f}{\wt{k}_0^\theta}\,z\theta(z)+f(0),
		\qquad\text{that is,}\qquad
		f(z)=\frac{c\,z\theta(z)+d}{z^{2}-\lambda z+1},
		\]
		with $c=\ip{f}{\wt{k}_0^\theta}$ and $d=f(0)$. Write $\lambda=\zeta+\overline{\zeta}$ with
		$\zeta=e^{i\alpha}\in\T$. Since $\lambda\ne\pm2$, the zeros $\zeta,\overline{\zeta}$ of
		$z^{2}-\lambda z+1$ are distinct and lie on $\T$. As $\theta$ is analytic across $\T$ while
		$(1-\overline{\zeta}z)^{-1}\notin H^{2}$ for $|\zeta|=1$, an $H^{2}$ function of the above form must have
		vanishing numerator at both zeros --- otherwise, by partial fractions, $f$ would inherit a non-$H^{2}$
		Cauchy kernel at $\zeta$ or $\overline{\zeta}$. Thus
		\[
		c\,\zeta\theta(\zeta)+d=0,\qquad c\,\overline{\zeta}\theta(\overline{\zeta})+d=0,
		\]
		and subtracting gives $c\bigl(\zeta\theta(\zeta)-\overline{\zeta}\theta(\overline{\zeta})\bigr)=0$. If
		$\zeta\theta(\zeta)\ne\overline{\zeta}\theta(\overline{\zeta})$ then $c=d=0$ and $f=0$, a contradiction.
		Hence $\zeta\theta(\zeta)=\overline{\zeta}\theta(\overline{\zeta})$ and $d=-c\,\zeta\theta(\zeta)$ is
		determined by $c$, so the eigenspace is one-dimensional. Every eigenvalue of the self-adjoint operator
		$\Sig$ is therefore simple. A self-adjoint operator on an $n$-dimensional space with simple spectrum is
		non-derogatory, so its commutant equals $\{p(\Sig):p\in\C[z]\}$, of dimension $n$. Combined with
		$\{\Sig\}'\subseteq\cls_\theta$, this yields the stated identities.
	\end{proof}
	
	\subsection{Application to products}\label{subsec:products}
	
	Our next goal is to address when a product of two elements of $\cls_\theta$ again lies in $\cls_\theta$. Note that 
	\begin{equation}\label{eq:leibniz}
		\Sig(XY)-(XY)\Sig=\bigl(\Sig X-X\Sig\bigr)Y+X\bigl(\Sig Y-Y\Sig\bigr),
	\end{equation}
	For the rest of the section, we will denote $R_X=\Sigma _\theta X-X\Sigma_\theta$ for $X \in \clb(\clk_\theta)$.
	
	\begin{theorem}\label{thm:product}
		Let $\theta$ be a finite Blaschke product of degree $n\ge2$, let $A,B\in\cls_\theta$ and  $\{e_1,e_2\}$ be an orthonormal basis of
		$E=\spn\{k_0^\theta,\wt{k}_0^\theta\}$ and $Q$ the orthogonal projection onto $E^{\perp}$. Then
		\begin{equation}\label{eq:prodformula}
			Q\bigl(\Sig(AB)-(AB)\Sig\bigr)Q
			=\sum_{i=1}^{2}\Bigl[\bigl(QR_Ae_i\bigr)\otimes\bigl(QB^{*}e_i\bigr)
			+\bigl(QAe_i\bigr)\otimes\bigl(QR_B^{*}e_i\bigr)\Bigr],
		\end{equation}
		and consequently
		\[
		AB\in\cls_\theta\iff\sum_{i=1}^{2}\Bigl[\bigl(QR_Ae_i\bigr)\otimes\bigl(QB^{*}e_i\bigr)
		+\bigl(QAe_i\bigr)\otimes\bigl(QR_B^{*}e_i\bigr)\Bigr]=0 .
		\]
		
	\end{theorem}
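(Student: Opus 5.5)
The plan is to start from the Leibniz rule \eqref{eq:leibniz}, $R_{AB}=R_AB+AR_B$, and compress both sides by $Q$. This gives
\[
QR_{AB}Q=QR_A B Q+QA R_B Q.
\]
Next I will insert the resolution $I=Q+(I-Q)$ with $I-Q=e_1\otimes e_1+e_2\otimes e_2$ in the middle of each product:
\[
QR_ABQ=QR_AQ\,BQ+QR_A(I-Q)BQ,\qquad QAR_BQ=QA(I-Q)R_BQ+QAQ\,R_BQ.
\]
Since $A,B\in\cls_\theta$, Proposition~\ref{prop:necessity} gives $R_A,R_B\in\clr_\Sigma$. By Lemma~\ref{lem:Rsig} this means $QR_AQ=0$ and $QR_BQ=0$. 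The first term of the first expansion vanishes immediately. The second term of the second expansion, $QAQ\,R_BQ$, also vanishes, but only after another split: write $R_BQ=QR_BQ+(I-Q)R_BQ$, and the surviving piece is $QAQ(I-Q)R_BQ=0$ because $Q(I-Q)=0$. So only the middle-insertion terms remain.

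I would then evaluate these terms with the tensor rule \eqref{eq:tensor}:
\[
QR_A(e_i\otimes e_i)BQ=(QR_Ae_i)\otimes(QB^*e_i),\qquad QA(e_i\otimes e_i)R_BQ=(QAe_i)\otimes(QR_B^*e_i),
\]
using $Q^*=Q$. Summing over $i=1,2$ gives exactly \eqref{eq:prodformula}.

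For the equivalence, I apply Theorem~\ref{thm:comm} to $X=AB$. That theorem needs $\theta$ to be a finite Blaschke product of degree $n\ge 2$. It gives $AB\in\cls_\theta$ if and only if $Q(\Sig(AB)-(AB)\Sig)Q=0$, and by \eqref{eq:prodformula} this is the same as the vanishing of the displayed sum.

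I do not expect a genuine obstacle. The only points needing care are bookkeeping ones. The first is the correct adjoint placement in the second slot: $X(p\otimes q)Y=(Xp)\otimes(Y^*q)$, so $(e_i\otimes e_i)BQ=e_i\otimes(QB^*e_i)$. The second is making sure the $QAQ$ term is actually disposed of, which requires splitting $R_BQ$ as above rather than using $QR_BQ=0$ directly.
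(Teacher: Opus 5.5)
Your proposal is correct and follows the paper's proof: the Leibniz rule, inserting $I=Q+(I-Q)$, using $QR_AQ=QR_BQ=0$ from Proposition~\ref{prop:necessity} and Lemma~\ref{lem:Rsig}, applying the tensor rule \eqref{eq:tensor}, and invoking Theorem~\ref{thm:comm} for the equivalence (the paper writes $R_BQ=(I-Q)R_BQ$ directly, which amounts to your insertion). Your closing claim that $QAQ\,R_BQ$ needs an extra split is mistaken, though harmless, since $QAQ\,R_BQ=QA\,(QR_BQ)=0$ follows at once from $QR_BQ=0$.
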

	
	\begin{proof}
		By \eqref{eq:leibniz} we must compute $Q\bigl(R_AB+A R_B\bigr)Q$. For the first term, insert
		$I=Q+(I-Q)$ between $R_A$ and $B$:
		\[
		QR_ABQ=QR_AQ\,BQ+QR_A(I-Q)BQ.
		\]
		Since $A\in\cls_\theta$,
		Proposition~\ref{prop:necessity} and Lemma~\ref{lem:Rsig} give $QR_AQ=0$, so the first summand
		vanishes. Writing $I-Q=e_1\otimes e_1+e_2\otimes e_2$ and using \eqref{eq:tensor} together with
		$Q^{*}=Q$,
		\[
		QR_A(I-Q)BQ=\sum_{i=1}^2 QR_A\bigl(e_i\otimes e_i\bigr)BQ=\sum_{i=1}^2\bigl(QR_Ae_i\bigr)\otimes\bigl(QB^{*}e_i\bigr).
		\]
		For the second term, $QR_BQ=0$ gives $\ran(R_BQ)\subseteq E$, so $R_BQ=(I-Q)R_BQ$ and therefore
		\[
		QAR_BQ=QA(I-Q)R_BQ=\sum_{i=1}^2\bigl(QAe_i\bigr)\otimes\bigl(QR_B^{*}e_i\bigr).
		\]
		Adding both equations gives \eqref{eq:prodformula}. The stated equivalence is Theorem~\ref{thm:comm} applied to
		$X=AB$.
	\end{proof}
	
	In particular, Theorem \ref{thm:product} establishes that the inclusion of $AB$ in $\cls_\theta$ is decided by a sum of at most four rank-one operators, determined by the action of $A,B,R_A,R_B$ on the two-dimensional space $E$. Reading off the conditions under which the right-hand side of \eqref{eq:prodformula} vanishes for
	trivial reasons gives a class of universal multipliers.
	
	\begin{corollary}\label{cor:multiplier}
		Let $\theta$ be a finite Blaschke product of degree $n\ge2$ and let $A\in\cls_\theta$ satisfy
		\[
		AE\subseteq E\qquad\text{and}\qquad R_A E\subseteq E .
		\]
		Then $AB\in\cls_\theta$ for every $B\in\cls_\theta$. Symmetrically, if $B\in\cls_\theta$ satisfies
		$B^{*}E\subseteq E$ and $R_B^*E\subseteq E$, then $AB\in\cls_\theta$ for
		every $A\in\cls_\theta$.
	\end{corollary}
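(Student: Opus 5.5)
The plan is to read the statement directly off the product formula \eqref{eq:prodformula} of Theorem~\ref{thm:product}, showing that each of the (at most) four rank-one summands on its right-hand side vanishes under the stated hypotheses. Then Theorem~\ref{thm:comm} (through Theorem~\ref{thm:product}) gives $AB\in\cls_\theta$. No new computation with $S_\theta$ is needed; only the fact that $Q$ annihilates $E$ is used.

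For the first assertion, fix $A\in\cls_\theta$ with $AE\subseteq E$ and $R_AE\subseteq E$, and let $B\in\cls_\theta$ be arbitrary. Since $e_1,e_2\in E$, the hypotheses give $R_Ae_i\in E$ and $Ae_i\in E$ for $i=1,2$. Because $Q$ is the orthogonal projection onto $E^\perp$, we have $QR_Ae_i=0$ and $QAe_i=0$. Every summand $\bigl(QR_Ae_i\bigr)\otimes\bigl(QB^{*}e_i\bigr)$ and $\bigl(QAe_i\bigr)\otimes\bigl(QR_B^{*}e_i\bigr)$ has a zero vector in its first slot, so it is the zero operator. The right-hand side of \eqref{eq:prodformula} therefore vanishes, and Theorem~\ref{thm:product} yields $AB\in\cls_\theta$.

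For the symmetric statement, fix $B\in\cls_\theta$ with $B^{*}E\subseteq E$ and $R_B^{*}E\subseteq E$, and let $A\in\cls_\theta$ be arbitrary. Now $QB^{*}e_i=0$ and $QR_B^{*}e_i=0$, so every summand has a zero vector in its second slot. By the conjugate-linearity convention \eqref{eq:tensor}, $p\otimes 0=0$, so the sum again vanishes and $AB\in\cls_\theta$. Note that $R_B^{*}$ denotes $(R_B)^{*}=-R_{B^{*}}$ because $\Sig$ is self-adjoint; this is only a remark and is not needed.

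The only point requiring care is bookkeeping: each hypothesis kills exactly the vectors appearing in the relevant slots of \eqref{eq:prodformula}, namely the first slots for the $A$-condition and the second slots for the $B$-condition. I expect no genuine obstacle, since the statement is an immediate consequence of Theorem~\ref{thm:product}.
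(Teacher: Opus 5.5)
Your proposal is correct and follows the paper's own argument: the hypotheses give $QAe_i=0$ and $QR_Ae_i=0$ (respectively $QB^{*}e_i=0$ and $QR_B^{*}e_i=0$), so every summand in \eqref{eq:prodformula} vanishes and Theorem~\ref{thm:product} yields $AB\in\cls_\theta$. Your explicit treatment of the symmetric case via the second slots is exactly what the paper leaves to the reader.
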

	
	\begin{proof}
		The two hypotheses say precisely that $QAe_i=0$ and $QR_Ae_i=0$ for $i=1,2$, so every term on the
		right of \eqref{eq:prodformula} vanishes; now apply Theorem~\ref{thm:product}. 
	\end{proof}
	
	\begin{remark}\label{rem:multiplier}
		The class described in Corollary~\ref{cor:multiplier} always contains the identity. Its precise
		description, and whether it exhausts the operators $A$ with $AB\in\cls_\theta$ for every $B\in\cls_\theta$, remain open.
	\end{remark}

	\newsection{Concluding remarks}\label{sec:Conclusion}
	
	\subsection{The infinite-dimensional case}\label{subsec:converse}
	
	For a finite Blaschke product, Theorem~\ref{thm:comm} characterizes $\cls_\theta$ among all bounded
	operators. We now investigate whether this persists when $\dim\clk_\theta=\infty$, and the remainder
	of this section records what we can establish, closing with a conjecture. Because the computation
	in the proof of Theorem~\ref{thm:comm} is purely algebraic, one inclusion survives for every inner
	function. The reverse inclusion --- the substance of the characterization --- reduces to a
	Stein-solvability question. We prove that this solvability fails for inner functions without an
	ADC at $1$, which, together with a commutant obstruction, points to a negative
	answer, $\cls_\theta\subsetneq\cld_\theta$.
	
	\begin{proposition}\label{prop:reduction}
		For every inner function $\theta$ and every $X\in\clb(\clk_\theta)$,
		\[
		\Sig X-X\Sig\in\clr_\Sigma\ \Longrightarrow\ D_HD_T(X)\in\clm .
		\]
		Consequently, writing $\clc_\theta=\{X:\Sig X-X\Sig\in\clr_\Sigma\}$ and
		$\cld_\theta=\{X:D_HD_T(X)\in\clm\}$, one has, for every inner function,
		\[
		\cls_\theta\subseteq\clc_\theta\subseteq\cld_\theta,
		\]
		with equality throughout when $\dim\clk_\theta<\infty$.
	\end{proposition}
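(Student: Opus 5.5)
The plan is to reread the proof of Theorem~\ref{thm:comm}. Its sufficiency half never used finite-dimensionality until the final appeal to Theorem~\ref{thm:criterion}. So assume
\[
\Sig X-X\Sig=f\otimes k_0^\theta+k_0^\theta\otimes g+u\otimes\wt{k}_0^\theta+\wt{k}_0^\theta\otimes v
\]
for some $f,g,u,v\in\clk_\theta$, and multiply on the right by $S_\theta^{*}$. On the right-hand side, \eqref{eq:tensor} and \eqref{eq:s4} give \eqref{eq:multS}, whose terms are
\[
f\otimes S_\theta k_0^\theta,\qquad k_0^\theta\otimes S_\theta g,\qquad -\overline{\theta(0)}\,u\otimes k_0^\theta,\qquad \wt{k}_0^\theta\otimes S_\theta v .
\]
On the left, expanding $\Sig=S_\theta+S_\theta^*$ and using \eqref{eq:s1} gives the stated three-term expansion. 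The extra term $(Xk_0^\theta)\otimes k_0^\theta$ is of the form $(\cdot)\otimes k_0^\theta$. Collecting terms therefore yields \eqref{eq:mid} for arbitrary inner $\theta$. This uses only the Sarason identities, which hold for every non-constant inner function.

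Next I rewrite $D_HD_T(X)$. I will use the identity
\[
S_\theta^{*}\bigl(X-S_\theta XS_\theta^{*}\bigr)S_\theta^{*}=S_\theta^{*}XS_\theta^{*}-X(S_\theta^{*})^{2}+\wt{k}_0^\theta\otimes\bigl(S_\theta^{2}X^{*}\wt{k}_0^\theta\bigr).
\]
This comes from \eqref{eq:s2}, writing $S_\theta^*S_\theta=I-\wt{k}_0^\theta\otimes\wt{k}_0^\theta$ and applying \eqref{eq:tensor} to move $S_\theta^*$ and $X$ through the rank-one piece. Then
\[
D_HD_T(X)=D_T(X)-S_\theta^*D_T(X)S_\theta^*
\]
equals the left side of \eqref{eq:mid} minus $\wt{k}_0^\theta\otimes(S_\theta^2X^*\wt{k}_0^\theta)$. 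Substituting \eqref{eq:mid} expresses $D_HD_T(X)$ as a sum of terms of the types $u\otimes S_\theta k_0^\theta$, $k_0^\theta\otimes g$, $u\otimes k_0^\theta$ and $\wt{k}_0^\theta\otimes v$. Every such term lies in $\clm$, which proves the implication.

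For the chain of inclusions, $\cls_\theta\subseteq\clc_\theta$ is exactly Proposition~\ref{prop:necessity}, and it holds for any inner $\theta$. The inclusion $\clc_\theta\subseteq\cld_\theta$ is the implication just proved. When $\dim\clk_\theta<\infty$, $\theta$ is a finite Blaschke product of degree $n$. If $n\ge2$, Theorem~\ref{thm:criterion} gives $\cld_\theta=\cls_\theta$, so the chain collapses to equalities.

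The case $n=1$ needs a separate word. There $\clb(\clk_\theta)=\C$ and $\clt_\theta$ already contains $I$, so $\cls_\theta=\clb(\clk_\theta)$ and every inclusion is trivially an equality.

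The only real care needed is the bookkeeping of conjugate-linearity in the second slot when applying \eqref{eq:tensor}, for instance $(u\otimes\wt{k}_0^\theta)S_\theta^*=u\otimes S_\theta\wt{k}_0^\theta=-\overline{\theta(0)}\,u\otimes k_0^\theta$. I expect this to be the main, if mild, obstacle, since a sign or conjugate there does not affect membership in $\clm$ anyway.
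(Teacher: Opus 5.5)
Your proposal is correct and is essentially the paper's proof. The implication is the purely algebraic computation from the sufficiency half of Theorem~\ref{thm:comm}, the inclusion $\cls_\theta\subseteq\clc_\theta$ is Proposition~\ref{prop:necessity}, and equality in finite dimensions comes from Theorem~\ref{thm:criterion}. Your separate treatment of $n=1$, where $\cls_\theta=\clb(\clk_\theta)$ because $I\in\clt_\theta$, is a small improvement: the paper cites Theorem~\ref{thm:criterion}, which assumes $n\ge2$, without commenting on that case.
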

	
	\begin{proof}
		The implication is already established in the proof of Theorem~\ref{thm:comm}. Thus $\clc_\theta\subseteq\cld_\theta$.  The inclusion $\cls_\theta\subseteq\clc_\theta$ is Proposition~\ref{prop:necessity}.  When $\dim\clk_\theta<\infty$, $\cld_\theta=\cls_\theta$ by Theorem~\ref{thm:criterion} forcing equality.
	\end{proof}
	
	\begin{remark}\label{rem:infinite}
		Whether the reverse inclusion $\cld_\theta\subseteq\cls_\theta$ (equivalently
		$\clc_\theta=\cls_\theta$) holds for $\dim\clk_\theta=\infty$ is open. We have the following observations:
		\begin{enumerate}
			\item $\clc_\theta$ is norm-closed, so if $\clc_\theta=\cls_\theta$, then $\cls_\theta=\clt_\theta+\clh_\theta$ is	norm-closed --- which is not automatic for a sum of two closed subspaces.
			\item  For the finite-dimensional case, $\cld_\theta=\cls_\theta$ rests on the count of Theorem~\ref{thm:criterion}, which uses that $D_H$ and $D_T$, being injective, are automatically surjective, so $\ran D_H=\ran D_T=\clb(\clk_\theta)$.	In particular $\clr_H\subseteq\ran D_H$, so that every $\rho\in\clr_H$ solves the	\emph{Stein equation} $B-S_\theta^{*}B S_\theta^{*}=\rho$ (a discrete Lyapunov-type equation,	here with $S_\theta^{*}$ on both sides in place of an adjoint pair; see \cite{HJ} for such equations), and symmetrically $\clr_T\subseteq\ran D_T$ recovers the Toeplitz part. Solvability	of both equations is precisely what proves the sufficient part of Theorem~\ref{thm:criterion}. In infinite dimensional case, $D_H$ and $D_T$ remain injective \cite[Lemma 5.3]{BM}, but injectivity no longer forces surjectivity, so the inclusions	$\clr_H\subseteq\ran D_H$ and $\clr_T\subseteq\ran D_T$ are not automatic, and thus this Stein	solvability can fail, identifying the inner functions for which it holds would decide the criterion.
		\end{enumerate}
	\end{remark}
	
	The following lemma is well-known and its stronger version can be found in \cite{GuMa}. But we are giving a direct proof which is useful for our next result.
	
	\begin{lemma}\label{lem:RHclosed}
		$\clr_H$ is a norm-closed subspace of $\clb(\clk_\theta)$; consequently 	$\clh_\theta=\{B:D_H(B)\in\clr_H\}$ is norm-closed as well.
	\end{lemma}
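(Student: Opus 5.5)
To show $\clr_H$ is norm-closed, I would use the corner description in Lemma~\ref{lem:corner}: $X\in\clr_H$ if and only if $QXP=0$, where $P$ and $Q$ are the orthogonal projections onto $(\C k_0^\theta)^\perp$ and $(\C\wt{k}_0^\theta)^\perp$. The map
\[
\Phi\colon \clb(\clk_\theta)\to\clb(\clk_\theta),\qquad \Phi(X)=QXP,
\]
is linear and bounded, since $\norm{QXP}\le\norm{X}$. Hence $\clr_H=\ker\Phi$ is a norm-closed linear subspace. This route avoids arguing that limits of sums $x_m\otimes k_0^\theta+\wt{k}_0^\theta\otimes y_m$ keep that form. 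A direct argument would have to control $x_m$ and $y_m$ separately, which becomes delicate when $k_0^\theta$ and $\wt{k}_0^\theta$ are nearly parallel. The corner criterion already builds in exactly that splitting, which is why I would rely on it.

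For the second assertion, the Gu--Ma characterization \eqref{eq:THOchar} gives
\[
\clh_\theta=\{B\in\clb(\clk_\theta):D_H(B)\in\clr_H\}=D_H^{-1}(\clr_H).
\]
The map $D_H(X)=X-S_\theta^*XS_\theta^*$ is linear and satisfies $\norm{D_H(X)}\le 2\norm{X}$ because $\norm{S_\theta^*}\le1$, so it is continuous on $\clb(\clk_\theta)$. The preimage of a closed set under a continuous map is closed, so $\clh_\theta$ is norm-closed.

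The only point that needs care is that Lemma~\ref{lem:corner} holds for arbitrary bounded $X$, including in infinite dimensions. Its proof only uses that $k_0^\theta$ and $\wt{k}_0^\theta$ are nonzero, which holds because $\norm{k_0^\theta}^2=1-|\theta(0)|^2>0$. So no step here is a real obstacle.
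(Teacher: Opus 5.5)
Your proof is correct and is essentially the paper's argument. The paper checks directly that the limit $\rho$ maps $(\C k_0^\theta)^\perp$ into $\C\wt{k}_0^\theta$, i.e.\ $Q\rho P=0$, and then rebuilds the decomposition — this is the converse half of Lemma~\ref{lem:corner} redone by hand, whereas you cite that lemma to write $\clr_H=\ker(X\mapsto QXP)$, and the preimage step under the continuous map $D_H$ is the same in both. (Your aside that a direct argument would have to control $x_m,y_m$ separately is unfounded, since the paper's direct argument never does so, but this does not affect your proof.)
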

	\begin{proof}
		Let $\rho \in \clb(\clk_\theta)$ such that $\rho_j:=x_j\otimes k_0^\theta+\wt{k}_0^\theta\otimes y_j\to\rho$ in operator norm. Set $\Pi$
		to 	be the orthogonal projection of $\clk_\theta$ onto $\C k_0^\theta$. For $h\perp k_0^\theta$, one has
		$\rho_j h=\ip{h}{y_j}\wt{k}_0^\theta\in\C\wt{k}_0^\theta$, so
		$\rho h=\lim\limits_j\rho_j h\in\C\wt{k}_0^\theta$. Hence $\rho(I-\Pi)=\wt{k}_0^\theta\otimes y$ for some
		$y\in\clk_\theta$. Since $\rho\Pi=(\rho k_0^\theta)\otimes k_0^\theta/\norm{k_0^\theta}^{2}$ is of the
		form $x\otimes k_0^\theta$, we get 
		\[
		\rho=x\otimes k_0^\theta+\wt{k}_0^\theta\otimes y\in\clr_H.
		\]
		The
		final assertion follows because $\clh_\theta=D_H^{-1}(\clr_H)$ and $D_H$ is continuous.
	\end{proof}
	
	\begin{theorem}\label{thm:steinfail}
		Let $\theta$ be an inner function having no ADC at the point~$1$ --- for instance the atomic singular inner function 	$\theta(z)=\exp\!\bigl(-\tfrac{1+z}{1-z}\bigr)$ of Corollary~\ref{cor:cases}(ii). Then
		\[
		\clr_H\not\subseteq\ran D_H .
		\]
	\end{theorem}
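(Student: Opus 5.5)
The plan is to argue by contradiction: pick one explicit $\rho\in\clr_H$, assume $D_H(B)=\rho$ for some $B\in\clb(\clk_\theta)$, and show this forces $k_0^\theta\in\ran(I-S_\theta)$. By Lemma~\ref{lem:reduce} and Theorem~\ref{thm:range}, that is equivalent to $\theta$ having an ADC at $1$, which is excluded. The natural candidate is $\rho=k_0^\theta\otimes k_0^\theta$ (take $x=k_0^\theta$, $y=0$); if that choice does not close, I would instead use $\rho=x\otimes k_0^\theta$ with $x$ chosen so that the resulting vector equation below takes the form $(I-S_\theta)h=k_0^\theta$.

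\textbf{Step 1 (a vector equation).} Since $D_H(B)=\rho\in\clr_H$, the Gu--Ma theorem \eqref{eq:THOchar} gives $B\in\clh_\theta$. By Lemma~\ref{lem:Hcomm}, $B$ then also satisfies the first-order relations $S_\theta B-BS_\theta^*\in\clr_T$ and $S_\theta^*B-BS_\theta\in\{u\otimes\wt{k}_0^\theta+\wt{k}_0^\theta\otimes v\}$. I will evaluate $B-S_\theta^*BS_\theta^*=k_0^\theta\otimes k_0^\theta$ on suitable test vectors, namely $k_0^\theta$, $\wt{k}_0^\theta$ and vectors orthogonal to both, and combine the result with \eqref{eq:s1}--\eqref{eq:s4} and these commutator identities. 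The goal is to extract a single vector $h$ (for instance $h=Bk_0^\theta$ or $h=B^*\wt{k}_0^\theta$, up to a scalar multiple of $k_0^\theta$) satisfying a relation of the form $(I-S_\theta)h=c\,k_0^\theta$, or $(I-S_\theta^2)h\in c\,(k_0^\theta+S_\theta k_0^\theta)+\ldots$, with $c\neq0$.

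\textbf{Step 2 (the fallback).} If the bookkeeping in Step 1 resists, I will use the symbol description instead. Write $B=B_\vp$ and use the functional-model formula $S_\theta f=zf-\ip{f}{\wt{k}_0^\theta}\theta$ from Lemma~\ref{lem:auxproj}, together with $S_\theta^*f=\overline z(f-f(0))$, to compute $D_H(B_\vp)$ explicitly as $F\otimes k_0^\theta+\wt{k}_0^\theta\otimes G$, with $F,G$ given in terms of projections of $\vp$. Equating this with $\rho$ should yield $(1-z)h=1-\beta\theta$ for some $h\in\clk_\theta$. This is exactly the situation of Lemma~\ref{lem:solve}, and Lemma~\ref{lem:AC} then forbids it when there is no ADC at $1$.

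\textbf{Step 3 (conclusion).} Once $c\neq0$ is established, Step 1 or Step 2 gives $k_0^\theta\in\ran(I-S_\theta)$, hence an ADC at $1$ by Lemma~\ref{lem:reduce}, Theorem~\ref{thm:range} and Lemma~\ref{lem:AC}. This is a contradiction, so $\rho\notin\ran D_H$.

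As a heuristic check, and an alternative route if needed, note that the formal solution $\sum_{m\ge0}(S_\theta^*)^m\rho\, S_\theta^{*m}$, when tested on kernels $k_\lambda^\theta$ with $\lambda\to1$ nontangentially, should grow like $(1-|\theta(\lambda)|)/(1-|\lambda|)$. That quantity is unbounded precisely when there is no ADC at $1$, by Lemma~\ref{lem:Julia}, which would also give the contradiction through a uniform boundedness argument.

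The main obstacle is Step 1: guaranteeing that the coefficient $c$ in front of $k_0^\theta$ is nonzero, so that one genuinely lands in $\ran(I-S_\theta)$ rather than in a trivial identity. The choice of $\rho$ has to be tuned to make this happen.
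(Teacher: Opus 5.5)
\textbf{There is a genuine gap.} The step that carries all the content (Step 1, or its fallback Step 2) is never carried out. You give no identity that turns $D_H(B)=\rho$ into $(I-S_\theta)h=c\,k_0^\theta$ with $c\neq0$. In Step 2, the claim that equating symbols ``should yield'' $(1-z)h=1-\beta\theta$ is asserted, not derived.

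Your natural candidate also cannot work, because $k_0^\theta\otimes k_0^\theta\in\ran D_H$ for \emph{every} inner $\theta$.
\begin{enumerate}
\item When $\theta(0)=0$ this is immediate: $S_\theta^*k_0^\theta=0$ gives $D_H(k_0^\theta\otimes k_0^\theta)=k_0^\theta\otimes k_0^\theta$. For instance, $z\exp\bigl(-\tfrac{1+z}{1-z}\bigr)$ has $\theta(0)=0$ and no ADC at $1$.
\item In general, use $S_\theta^mk_0^\theta=P_{\clk_\theta}z^m$ and $S_\theta^{*m}k_0^\theta=-\overline{\theta(0)}\,S^{*m}\theta$ for $m\ge1$, where $S$ is the shift on $H^2$. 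Then the WOT-convergent formal solution $\sum_{m\ge0}(S_\theta^{*m}k_0^\theta)\otimes(S_\theta^mk_0^\theta)$ equals $k_0^\theta\otimes k_0^\theta$ minus $\overline{\theta(0)}$ times the operator of the form $(f,g)\mapsto\sum_{m\ge1,\,k\ge0}\hat\theta(m+k)\hat f(m)\overline{\hat g(k)}$. This is a Hankel form with symbol $\theta\in H^\infty$, hence bounded.
\end{enumerate}
The same computation for $\rho=x\otimes k_0^\theta$ gives the form with coefficients $\hat x(m+k)$. So membership in $\ran D_H$ is an analytic boundedness question, and nothing in your plan explains how it would collapse to the algebraic relation $(I-S_\theta)h=c\,k_0^\theta$.

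Your closing heuristic does not help as stated either. We have $|\ip{Bk_\lambda^\theta}{k_\lambda^\theta}|\le\norm{B}\norm{k_\lambda^\theta}^2$, and $\norm{k_\lambda^\theta}^2=(1-|\theta(\lambda)|^2)/(1-|\lambda|^2)$ grows at exactly the rate you predict. So no contradiction arises for a fixed $\rho$.

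\textbf{The missing idea.} The obstruction is not an explicit bad $\rho$. It is the failure of $D_H$ to be bounded below on $\clh_\theta$, exploited through the open mapping theorem. For real $r\in(0,1)$ one has $S_\theta^*k_r^\theta=rk_r^\theta-\overline{\theta(r)}\wt{k}_0^\theta$ and $S_\theta k_r^\theta=r^{-1}(k_r^\theta-k_0^\theta)$, so
\[
D_H(k_r^\theta\otimes k_r^\theta)=k_r^\theta\otimes k_0^\theta+\tfrac{\overline{\theta(r)}}{r}\,\wt{k}_0^\theta\otimes(k_r^\theta-k_0^\theta)\in\clr_H .
\]
Consequently:
\begin{enumerate}
\item $k_r^\theta\otimes k_r^\theta\in\clh_\theta$;
\item $\norm{D_H(k_r^\theta\otimes k_r^\theta)}=O(\norm{k_r^\theta})$;
\item $\norm{k_r^\theta\otimes k_r^\theta}=\norm{k_r^\theta}^2\to\infty$ as $r\to1^-$, by Julia--Carath\'eodory, since there is no ADC at $1$.
\end{enumerate}
If $\clr_H\subseteq\ran D_H$, the injective map $D_H$ would send $\clh_\theta$ onto $\clr_H$. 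Both are norm closed (Lemma~\ref{lem:RHclosed}), so the inverse would be bounded, a contradiction.

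Your uniform-boundedness remark points in this direction, but it must be applied to such a family inside $\clh_\theta$, not to a single fixed $\rho$. Note also that this route is non-constructive: it never identifies a specific $\rho\notin\ran D_H$. An explicit $\rho$ is exactly what your plan requires, and the proposal does not supply one.
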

	\begin{proof}
		Fix $r\in(0,1)$. By \cite[Lemma 2.2]{Sarason} at $\lambda=r\in \R$,
		$S_\theta^{*}k_r^\theta=r\,k_r^\theta-\overline{\theta(r)}\,\wt{k}_0^\theta$ and
		$S_\theta k_r^\theta=r^{-1}(k_r^\theta-k_0^\theta)$. So,
		\[
		S_\theta^{*}\bigl(k_r^\theta\otimes k_r^\theta\bigr)S_\theta^{*}
		=\bigl(S_\theta^{*}k_r^\theta\bigr)\otimes\bigl(S_\theta k_r^\theta\bigr)
		=k_r^\theta\otimes(k_r^\theta-k_0^\theta)-\frac{\overline{\theta(r)}}{r}\,\wt{k}_0^\theta\otimes(k_r^\theta-k_0^\theta).
		\]
		Subtracting from $k_r^\theta\otimes k_r^\theta$  gives
		\begin{equation}\label{eq:steinDH}
			D_H\bigl(k_r^\theta\otimes k_r^\theta\bigr)
			=k_r^\theta\otimes k_0^\theta+\frac{\overline{\theta(r)}}{r}\,\wt{k}_0^\theta\otimes(k_r^\theta-k_0^\theta)
			\ \in\ \clr_H,
		\end{equation}
		the two terms being of the forms $x\otimes k_0^\theta$ and $\wt{k}_0^\theta\otimes y$; hence
		$k_r^\theta\otimes k_r^\theta\in\clh_\theta$ by \eqref{eq:THOchar}. Now
		$\norm{k_r^\theta\otimes k_r^\theta}=\norm{k_r^\theta}^{2}$, while \eqref{eq:steinDH} yields
		\[
		\norm{D_H(k_r^\theta\otimes k_r^\theta)}
		\le\norm{k_0^\theta}\,\norm{k_r^\theta}
		+\frac{|\theta(r)|}{r}\,\norm{k_0^\theta}\bigl(\norm{k_r^\theta}+\norm{k_0^\theta}\bigr),
		\]
		so that $\norm{D_H(k_r^\theta\otimes k_r^\theta)}\big/\norm{k_r^\theta\otimes k_r^\theta}\le C/\norm{k_r^\theta}$
		for $r$ near $1$. The absence of an angular derivative at $1$ forces
		$\norm{k_r^\theta}^{2}=(1-|\theta(r)|^{2})/(1-r^{2})\to\infty$ as $r\to1^{-}$, so this ratio tends
		to $0$; that is, $D_H$ is not bounded below on $\clh_\theta$.
		
		Suppose, for contradiction, that $\clr_H\subseteq\ran D_H$. Since $B\in\clh_\theta\iff D_H(B)\in\clr_H$
		and $D_H$ is injective \cite[Lemma 5.3]{BM}, $D_H$ maps $\clh_\theta$ bijectively onto $\clr_H$. Both
		are Banach spaces by Lemma~\ref{lem:RHclosed}, so the open mapping theorem makes the inverse of
		$D_H\colon\clh_\theta\to\clr_H$ bounded, that is, $D_H$ is bounded below on $\clh_\theta$,
		contradicting the previous paragraph. Hence $\clr_H\not\subseteq\ran D_H$.
	\end{proof}
	
	Thus the Stein solvability of Remark~\ref{rem:infinite}, on which the finite-dimensional identity
	$\clc_\theta=\cls_\theta$ rests, fails for such $\theta$, and the argument of Theorem~\ref{thm:comm}
	does not apply to them. This leads us to the following conjecture.
	
	\begin{conjecture}\label{conj:strict}
		For every inner function $\theta$ with $\dim\clk_\theta=\infty$ one has
		$\cls_\theta\subsetneq\cld_\theta$; equivalently, the characterization of Theorem~\ref{thm:comm}
		does not extend to infinite dimensions.
	\end{conjecture}

	\section*{Declarations}
	\noindent\textbf{Funding.} Not applicable. \\
	\noindent\textbf{Conflict of interest.} The authors declare that they have no conflicts of interest. \\
	\noindent\textbf{Data availability.} This work is purely theoretical, and no datasets are used here.

\end{document}